\newtheorem{Theorem}{Theorem}[section]
\newtheorem{Lemma}[Theorem]{Lemma}
\newtheorem{Corollary}[Theorem]{Corollary}
\newtheorem{Definition}{Definition}
\newtheorem{Remark}[Theorem]{Remark}
\def\newsection{ \separate
  \refstepcounter{subsection}  
 {\large\bf \thesubsection\kern.3em}}
\makeatother \theoremstyle{plain}
\title{Real forms of complex surfaces of constant mean curvature}
\author{Shimpei Kobayashi}
 \address{Graduate School of Science and Technology \\ Hirosaki University \\
 Bunkyocho 3 Aomori 036-8561 Japan}
 \email{shimpei@cc.hirosaki-u.ac.jp}
 \subjclass[2000]{53A10}
\date{}
\begin{document}
\maketitle
\begin{abstract} 
 It is known that complex constant mean curvature  ({\sc
 CMC} for short) immersions in $\mathbb C^3$ are natural
 complexifications of {\sc CMC}-immersions in $\mathbb R^3$.
 In this paper, conversely we consider {\it real form surfaces} of a
 complex {\sc CMC}-immersion, which are defined from real forms
 of the twisted $\mathfrak{sl}(2, \mathbb C)$ loop algebra $\Lambda
 \mathfrak{sl}(2, \mathbb C)_\sigma$, and classify all such surfaces
 according to the classification of real forms of $\Lambda
 \mathfrak{sl}(2, \mathbb C)_\sigma$. There are seven classes of surfaces,
 which are called {\it integrable surfaces}, and all integrable surfaces
 will be characterized by the (Lorentz) harmonicities of their Gau{\ss}
 maps into the symmetric spaces $S^2$, $H^2$, $S^{1,1}$ or the
 $4$-symmetric space $SL(2, \mathbb C)/U(1)$. We also give a
 unification to all integrable surfaces via the generalized
 Weierstra{\ss} type representation.
\end{abstract}

\section{Introduction}
 The goal of this paper is to give a unified theory for {\it integrable
 surfaces} using the real forms of complex extended framings of complex
 {\sc CMC}-immersions and the generalized Weierstra{\ss} type
 representation for complex {\sc CMC}-immersions. 

 It is classically  known that {\sc CMC} surfaces with nonzero mean
 curvature, or equivalently constant positive Gau{\ss}ian curvature
 ({\sc CPC} for short) surfaces as parallel surfaces,
 and constant negative Gau{\ss}ian curvature
 ({\sc CNC} for short) surfaces in $\mathbb R^3$ are characterized by
 the transformations of real (or complex) tangential line congruences between
 surfaces with special properties, which are  commonly called ``(Bianchi)
 B\"{a}cklund transformations''. In modern terminology, such
 classes of surfaces are characterized by the (Lorentz) harmonicities of
 their Gau{\ss} maps, and they are equivalent to the
 existence of families of flat connections on $\mathcal M \times SO(3)$, where
 $\mathcal M$ is $\mathbb C$ for {\sc CMC}-immersions or $\mathbb R^{1,1}$
 for {\sc CNC}-immersions, see \cite{DPW} and \cite{MS:PSsurfaces}. 
 Spacelike or timelike constant positive or negative Gau{\ss}ian
 curvature surfaces in $\mathbb R^{1,2}$ are less known, however,
 they are also characterized by the (Lorentz) harmonicities of their Gau{\ss}
 maps, or equivalently, the existence of
 families of flat connections on $\mathcal M \times
 SO(2,1)$, where $\mathcal M$ is $\mathbb C$ for spacelike {\sc
 CNC}-immersions, or equivalently spacelike {\sc CMC}-immersions,
 and timelike {\sc CNC}-immersions, or $\mathbb R^{1, 1}$ for spacelike {\sc
 CPC}-immersions and timelike {\sc CPC}-immersions, or equivalently
 timelike {\sc CMC}-immersions, 
 see \cite{DIT:Timelike} and \cite{Klotz:Harm-Mink}.  

 On the one hand, to classify all {\sc CMC}-cylinders in $\mathbb R^3$,
 in \cite{DK:cyl} we gave a natural complexification of the extended
 framing, a moving frame with spectral parameter and an element in the
 $SU(2)$ loop group, of a {\sc CMC}-immersion, which is called the {\it
 complex extended framing}.  
 Moreover in \cite{DKP:Complex}, we introduced holomorphic
 immersions in $\mathbb C^3$ associated with the complex extended
 framings and a natural definition of the complex mean curvature for a
 holomorphic immersion. Then a holomorphic immersion with
 complex constant mean curvature $H \in \mathbb C$ is naturally called
 the {\it complex {\sc CMC}-immersion}.
 Similar to the real case, a holomorphic immersion with complex constant Gau{\ss}
 curvature $K \in \mathbb C^*$ ({\sc CGC} for short) is obtained as the parallel
 immersion of a complex {\sc CMC}-immersion with nonzero complex
 constant mean curvature $H \in \mathbb C^*$.

 In this paper, we shall interpret those complex {\sc CGC}-immersions, or
 equivalently {\sc CMC}-immersions by the parallel immersions,
 as {\it complexifications} for the surfaces discussed above. These
 real surfaces are then obtained by the real form surfaces of a complex {\sc
 CGC}-immersion, which are defined from the real forms of
 the Maurer-Cartan form of the complex extended framing of a complex {\sc
 CMC}-immersion. It is known that the twice central extensions of a loop algebra $\Lambda
 \mathfrak{sl}(2, \mathbb C)_{\sigma}$ is a twisted affine Kac-Moody Lie
 algebra of $A_1^{(1)}$ type. The classification of the real forms of affine
 Kac-Moody Lie algebras was given in \cite{BBBR:almostsplit} and
 \cite{BR:almostcompact}. It follows that the classification of real forms of $\Lambda
 \mathfrak{sl}(2, \mathbb C)_\sigma$ is also given. In particular, the real forms of
 $\Lambda \mathfrak{sl}(2, \mathbb C)_{\sigma}$ consist of seven classes: 
 three classes are called the {\it almost split} and the other four
 classes are called the {\it almost compact}, according to the types of
 the semi-linear involutions of the real forms. 
 Thus there are seven classes of surfaces, which are called {\it integrable
 surfaces}, according to the classification of the real forms of
 $\Lambda \mathfrak{sl} (2, \mathbb C)_\sigma$. Spacelike or timelike {\sc CMC} or {\sc CGC}
 surfaces in $\mathbb R^3$ or $\mathbb R^{1,2}$ form the six classes of
 integrable surfaces, and {\sc CMC} surfaces with mean curvature $|H| <1$
 in $H^3$ form the last class of integrable surfaces (Theorem
 \ref{thm:compactsplit} and Corollary \ref{coro:compactsplit}).
 Moreover, all integrable surfaces are characterized by (Lorentz) harmonicities of
 their Gau{\ss} maps, which are maps into symmetric spaces $S^2$, $H^2$,
 $S^{1,1}$ and the $4$-symmetric space $SL(2, \mathbb C)/U(1)$
 respectively, Theorem \ref{thm:Gaussmap}.

 The generalized Weierstra{\ss} type representation for complex {\sc
 CMC}-immersions is a procedure to construct complex {\sc
 CMC}-immersions in $\mathbb C^3$, see Section \ref{subsc:DPW} for more
 details: {\bf 1.} Define pairs of holomorphic
 potentials, which are pairs of holomorphic 1-forms $\check \eta =
 (\eta, \tau)$ with $\eta = \sum_{j \geq -1}^{\infty} \eta_j \lambda^j$
 and $\tau = \sum_{-\infty}^{j \leq 1} \tau_j \lambda^j$. Here $\lambda$
 is the complex parameter, the so-called ``spectral parameter'', $\eta_j$ and $\tau_j$ are diagonal
 (resp. off-diagonal) holomorphic 1-forms
 depending only on one complex variable  if $j$ is even (resp. $j$ is
 odd). {\bf 2.} Solve the pair of ODE's $d (C, L) = (C, L) \check \eta$
 with some initial condition $(C(z_*), L(w_*))$, and perform the
 generalized Iwasawa decomposition, Theorem
 \ref{doublesplitting},
 for $(C, L)$, giving $ (C, L)= (F, F)(V_+, V_{-})$. 
 It is known that $F \cdot l$ is the complex extended framing of some complex 
 {\sc CMC}-immersion Theorem \ref{thm:DKP-Extendedframings},
 where $l$ is some $\lambda$-independent
 diagonal matrix.  {\bf 3.} Form a complex {\sc CMC}-immersion 
 by the Sym formula $\varPsi$  via the complex extended framing 
 $F \cdot l$, Theorem \ref{thm:Sym-Bob}.

 Since each class of integrable surfaces is defined
 by a real form of $\Lambda \mathfrak{sl}(2, \mathbb C)_\sigma$, there exists a
 unique semi-linear involution $\rho$ corresponding to each class of
 integrable surfaces. Then these semi-linear involutions 
 naturally define the pairs of semi-linear involutions on pairs of
 holomorphic potentials $\check \eta = (\eta, \tau)$.
 It follows that the generalized Weierstra{\ss} type representation for
 each class of integrable surfaces can be formulated by the above
 construction with a pair of holomorphic potentials which is invariant
 under a pair of semi-linear involutions, Theorem
 \ref{thm:DPWforint}.
 In this way we give a unified theory for all integrable surfaces.

 More precisely, in Section \ref{sc:Pre}, we give a brief review of the
 basic results for complex {\sc CMC} and {\sc CGC}-immersions. In
 Section \ref{subsc:holonull}, holomorphic null
 immersions and the basic facts for holomorphic null
 immersions are considered. In Section \ref{subsc:complexCMC}, the basic
 facts and results for complex {\sc CMC}-immersions are 
 given. Analogously to the complex {CMC}-immersions, 
 the definition and the basic facts for complex {\sc CGC}-immersions 
 are given, Theorem \ref{thm:Sym-Bob}.
 In Section \ref{subsc:realform}, the classification of real forms of $\Lambda
 \mathfrak{sl}(2, \mathbb C)_{\sigma}$ is given, Theorem
 \ref{thm:almostcompact} and Theorem \ref{thm:almostsplit}.

 In Section \ref{sc:Realforms}, we give a classification of integrable
 surfaces. In Section \ref{subsc:Integrablesurf}, it is shown that all
 integrable surfaces are obtained from real forms of 
 $\Lambda \mathfrak{sl}(2, \mathbb C)_\sigma$, 
 Theorem \ref{thm:compactsplit}.  In Section
 \ref{subsc:gaussmap}, the Gau{\ss} maps of integrable surfaces are 
 characterized using the Gau{\ss} map of a complex {\sc CMC}-immersion and
 the real forms of the complex extended framing, Theorem \ref{thm:Gaussmap}.

 In Section \ref{sc:DPW}, we give a construction of all integrable surfaces
 via the generalized Weierstra{\ss} type representation.
 In Section \ref{subsc:DPW}, the pairs of semi-linear
 involutions, which are determined from classes of integrable
 surfaces, are considered. Then a construction of all
 integrable surfaces is discussed via the pairs of holomorphic
 potentials which are invariant under the pairs of
 semi-linear involutions, Theorem \ref{thm:DPWforint}.
 
 In Appendix \ref{BasicResult}, we give the basic results for affine
 Kac-Moody Lie algebras and the loop algebras. In Section
 \ref{subsc:Affine}, Kac-Moody Lie algebras are considered.
 In Section \ref{loopgroups}, loop groups and a realization of affine
 Kac-Moody Lie algebras  via  the twice central extensions of loop
 algebras are discussed, Theorem \ref{thm:Kac}.
 In Section \ref{nsc:doubleIwasawa}, double
 loop groups are defined and the Iwasawa decomposition theorem 
 for the double loop groups are given, Theorem \ref{doublesplitting}.

\section{Preliminaries}\label{sc:Pre}
 In this preliminary section,  we give a brief review of the basic
 results for holomorphic null immersions, complex {\sc
 CMC}-immersions and complex {\sc CGC}-immersions. 
 We also give a brief review of the basic facts about loop 
 algebras and their real forms.
 
 Throughout this paper, $\mathbb C^3$ is identified with
 $\mathfrak{sl}(2, \mathbb C)$ as follows:
\begin{equation}\label{eq:ident1}
 (a, b, c)^t \in \mathbb C^3 \leftrightarrow -\frac{i a}{2} \sigma_1
  -\frac{i b}{2} \sigma_2 - \frac{i c}{2} \sigma_3 \in \mathfrak{sl} (2,
  \mathbb C)\;\;,
\end{equation}
where $\sigma_j\;(j=1, 2, 3)$ are Pauli matrices as follows:
\begin{equation}\label{eq:ident2}
\sigma_1 = \begin{pmatrix}0 & 1 \\ 1 & 0 \end{pmatrix}, \;\;\sigma_2 =
\begin{pmatrix} 0 & -i  \\ i & 0
\end{pmatrix}\;\; \mbox{and}\;\;\sigma_3 = \begin{pmatrix} 1 & 0 \\ 0 & -1\end{pmatrix}\;.
\end{equation}

 \subsection{Holomorphic null immersions in $\mathbb
  C^3$}\label{subsc:holonull}
 In this subsection, we show the basic results for holomorphic
 immersions $\varPsi$ from $\mathfrak D^2 \subset \mathbb C^2$ into
 $\mathbb C^3$. We give natural definitions of complex mean curvature (Definition
 \ref{def:Mean}) and complex Gau{\ss} curvature (Definition
 \ref{def:Gauss}) for a holomorphic immersion  analogous to the mean
 curvature and the Gau{\ss} curvature of a surface in $\mathbb R^3$.
 We refer to \cite{DKP:Complex} for more details.

 Let $\mathcal M$ be a simply connected $2$-dimensional Stein manifold, and let $\varPsi:
 \mathcal M \to \mathfrak{sl}(2, \mathbb C)$ be a holomorphic immersion,
 i.e., the complex rank of $d\varPsi$ is two. We consider the following bilinear
 form on $\mathfrak{sl}(2, \mathbb C) \cong \mathbb C^3$:
 \begin{equation}\label{eq:bilinear}
 \langle a, b \rangle = -2 {\rm Tr}\; a b\;,
 \end{equation}
 where $a, b \in \mathfrak{sl}(2, \mathbb C)$. We note that the bilinear
 form \eqref{eq:bilinear} is a $\mathbb C$-bilinear form on $\mathbb
 C^3$ by the identification \eqref{eq:ident1}. Then it is known that,
 for a neighborhood $\widetilde {\mathcal M}_p \subset \mathcal M$ around
 each point $p \in \mathcal {M}$, the bilinear form \eqref{eq:bilinear}
 induces a holomorphic Riemannian metric on $\widetilde{\mathcal M}_p$, 
 i.e., a holomorphic covariant symmetric 2-tensor
 $g$, see \cite{LeBrun:Complex-Riemannian} and \cite{DKP:Complex}. From
 \cite{DKP:Complex}, it is also known that there exist
 special coordinates $(z, w) \in \mathfrak D^2 \subset \mathbb C^2$ such
 that a holomorphic Riemannian metric $g$ can be
 written as follows:
 \begin{equation}\label{eq:nullmetric}
 g = e^{u(z, w)}dz dw\;,
 \end{equation}
 where $u(z, w) : \mathfrak D^2 \to \mathbb C$ is some holomorphic
 function. The special coordinates defined above are called {\it
 null coordinates}. From now on, we always assume a holomorphic immersion
 $\varPsi : \mathcal M \to \mathfrak{sl}(2, \mathbb C)$ has null
 coordinates. A holomorphic immersion with null
 coordinates is also called the {\it holomorphic null immersion}.

 \begin{Remark}
 The assumption ``Stein'' is used for the existence of the form of a
  holomorphic Riemannian metric $g =e^{u(z, w)} dz dw$ defined in
  \eqref{eq:nullmetric} and the existence of a well-defined pair of
  holomorphic  potentials on $\mathcal M$ for the generalized Weierstra{\ss}
  type representation  in Section \ref{sc:DPW}, see \cite{DKP:Complex} for more
  details.
 \end{Remark}
 We now define a vector $N \in \mathfrak{sl}(2, \mathbb C)$ as follows:
 \begin{equation}
 \label{eq:GaussMap}
 N := 2 i e^{-u} [\varPsi_{w},\; \varPsi_z ]\;,
 \end{equation}
 where the subscripts $z$ and $w$ denote the partial derivatives with
 respect to $z$ and $w$, respectively.
 It is easy to verify that  $\langle \varPsi_z, N\rangle =\langle
 \varPsi_w, N\rangle = 0$ and the $\langle N, N\rangle =1$. Thus $N$ is 
  a transversal vector to $ d \varPsi$. Therefore it is natural to call $N$ 
 the {\it complex Gau{\ss} map} of $\varPsi$.

 From \cite{DKP:Complex}, we quote the following theorem:
 \begin{Theorem}[\cite{DKP:Complex}]\label{thm:MovingFrame}
 Let $\varPsi : \mathcal M \to \mathbb C^3 (\cong \mathfrak{sl}(2, \mathbb C))$ be a holomorphic null
  immersion. Then there exists a $SL(2, \mathbb C)$ matrix $F$ such that 
  the following equations hold:
 \begin{equation}
 \label{eq:Lax-complex}
 \begin{array}{lcr}
 F_{z} = F U, \\
 F_{w} = F V,
 \end{array}
 \end{equation}
 where
\begin{equation}\label{eq:U-V}
\left\{
\begin{array}{lcr}
U =
\begin{pmatrix}
\frac{1}{4} u_z & -\frac{1}{2} H e^{u/2} \\
Q e^{-u/2} & -\frac{1}{4} u_{z}
\end{pmatrix},
 \\ \\
V =
\begin{pmatrix}
-\frac{1}{4} u_w & - R e^{-u/2} \\
\frac{1}{2} H e^{u/2} & \frac{1}{4} u_w
\end{pmatrix},
\end{array}
\right.
\end{equation}

 with $ Q := \langle \varPsi_{zz}, N \rangle $,  $R:=\langle
 \varPsi_{ww}, N\rangle $ and $H := 2 e^{-u} \langle \varPsi_{z w}, N
 \rangle $.  
 \end{Theorem}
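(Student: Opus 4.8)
The plan is to construct the frame $F$ as an adapted moving frame along the immersion and then read off the connection forms $U, V$ from the structure equations, exactly as one does in the real surface theory but working holomorphically. First I would build an explicit frame at each point. The natural candidates for a basis of $\mathfrak{sl}(2,\mathbb C)\cong\mathbb C^3$ adapted to the immersion are the two tangent vectors $\varPsi_z, \varPsi_w$ together with the complex Gauß map $N$ defined in \eqref{eq:GaussMap}. Using the null-coordinate normalization \eqref{eq:nullmetric}, namely $\langle\varPsi_z,\varPsi_z\rangle=\langle\varPsi_w,\varPsi_w\rangle=0$ and $\langle\varPsi_z,\varPsi_w\rangle=\tfrac12 e^{u}$, together with the relations $\langle\varPsi_z,N\rangle=\langle\varPsi_w,N\rangle=0$ and $\langle N,N\rangle=1$ already established before the statement, I would define $F$ by prescribing that, under the identification \eqref{eq:ident1}, the columns (or an appropriate normalized combination) of $F$ reproduce this adapted basis. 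Concretely, I would set
\begin{equation}
F^{-1}\varPsi_z F = \tfrac{i}{2}e^{u/2}\,E_{+},\qquad
F^{-1}\varPsi_w F = \tfrac{i}{2}e^{u/2}\,E_{-},\qquad
F^{-1} N F = -\tfrac{i}{2}\sigma_3,
\end{equation}
where $E_{\pm}$ are the standard nilpotent generators; the point is that the bilinear form \eqref{eq:bilinear} is exactly the Killing-type form for which this triple is orthonormal in the required sense, so such an $F\in SL(2,\mathbb C)$ exists pointwise and, by the Stein/simple-connectedness hypothesis, can be chosen holomorphically and globally.

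Next I would differentiate the defining relations. Writing $U:=F^{-1}F_z$ and $V:=F^{-1}F_w$, these are the two components of the Maurer-Cartan form of $F$, both lying in $\mathfrak{sl}(2,\mathbb C)$. To compute their entries I would differentiate the three conjugation identities above in $z$ and in $w$, expressing $\varPsi_{zz}, \varPsi_{zw}, \varPsi_{ww}$ and the derivatives of $N$ back in the basis $\{\varPsi_z,\varPsi_w,N\}$. The off-diagonal entries of $U$ and $V$ are precisely the coefficients of the expansion of the second derivatives of $\varPsi$ in that basis, which by the definitions $Q=\langle\varPsi_{zz},N\rangle$, $R=\langle\varPsi_{ww},N\rangle$ and $H=2e^{-u}\langle\varPsi_{zw},N\rangle$ are exactly the quantities appearing in \eqref{eq:U-V}. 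The diagonal entries $\pm\tfrac14 u_z$ and $\mp\tfrac14 u_w$ come from differentiating the normalization $\langle\varPsi_z,\varPsi_w\rangle=\tfrac12 e^{u}$ and the null conditions, which force the $\mathfrak{u}(1)$-part of the connection to be $\tfrac14 u_z\,dz-\tfrac14 u_w\,dw$ times $\sigma_3$. I would also use $N_z, N_w\in\operatorname{span}\{\varPsi_z,\varPsi_w\}$ (consequences of differentiating $\langle N,N\rangle=1$ and $\langle\varPsi_\bullet,N\rangle=0$) to pin down the remaining entries and check there are no stray diagonal contributions.

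The main obstacle I expect is the global and holomorphic existence of $F$, as opposed to the purely algebraic pointwise construction. Pointwise, any ordered adapted frame determines $F$ up to the stabilizer of the chosen normal form, so one must show the residual gauge freedom can be fixed holomorphically; this is where the hypotheses that $\mathcal M$ is simply connected and Stein do the work, guaranteeing a single-valued holomorphic lift into $SL(2,\mathbb C)$ (this is essentially the content quoted from \cite{DKP:Complex}). A secondary, purely computational point is bookkeeping the factors of $i$ and $e^{u/2}$ introduced by the identification \eqref{eq:ident1} and the specific normalization of the bilinear form \eqref{eq:bilinear}; these constants are exactly what produce the asymmetric placement of $H$, $Q$ and $R$ in \eqref{eq:U-V}, so they must be tracked carefully rather than absorbed. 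Once $F$ is available and the Maurer-Cartan components are computed, matching entries against \eqref{eq:U-V} is immediate and completes the proof.
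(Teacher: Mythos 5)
First, note that the paper itself offers no proof of this theorem: it is quoted verbatim from \cite{DKP:Complex}, so there is no internal argument to compare against. Your plan --- build the adapted frame $\{\varPsi_z,\varPsi_w,N\}$ with the null normalizations $\langle\varPsi_z,\varPsi_z\rangle=\langle\varPsi_w,\varPsi_w\rangle=0$, $\langle\varPsi_z,\varPsi_w\rangle=\tfrac12 e^u$, conjugate it to a fixed model triple by some $F\in SL(2,\mathbb C)$ (unique up to the center $\pm\id$, whence the holomorphic single-valued choice on a simply connected domain), and read off $U=F^{-1}F_z$, $V=F^{-1}F_w$ by differentiating the conjugation identities --- is the standard moving-frame argument and is surely the route taken in the cited source. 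The structure of the argument is sound.

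There is, however, a concrete inconsistency in the normalization you wrote down, and it is not merely a matter of ``bookkeeping to be done later'': with your choices $F^{-1}\varPsi_zF=\tfrac{i}{2}e^{u/2}E_+$ and $F^{-1}\varPsi_wF=\tfrac{i}{2}e^{u/2}E_-$, the definition \eqref{eq:GaussMap} forces
\begin{equation*}
N=2ie^{-u}[\varPsi_w,\varPsi_z]=2ie^{-u}\left(\tfrac{i}{2}e^{u/2}\right)^2 {\rm Ad}(F)[E_-,E_+]=+\tfrac{i}{2}\,{\rm Ad}(F)\,\sigma_3,
\end{equation*}
which contradicts your third relation $F^{-1}NF=-\tfrac{i}{2}\sigma_3$ (and the plus sign is the one consistent with the Gau{\ss} map $\tfrac{i}{2}F\sigma_3F^{-1}$ appearing in Theorem \ref{thm:Sym-Bob}). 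Worse, if one keeps your tangent normalization and the correct $N=+\tfrac{i}{2}{\rm Ad}(F)\sigma_3$, the computation $\langle\varPsi_{zz},N\rangle=2i\cdot\tfrac{i}{2}\,U_{21}=-U_{21}$ yields $U_{21}=-Qe^{-u/2}$, i.e.\ the off-diagonal entries of $U$ and $V$ come out as the negatives of those in \eqref{eq:U-V}. The choice that actually reproduces \eqref{eq:U-V} is $F^{-1}\varPsi_zF=-\tfrac{i}{2}e^{u/2}E_+$, $F^{-1}\varPsi_wF=-\tfrac{i}{2}e^{u/2}E_-$, $F^{-1}NF=+\tfrac{i}{2}\sigma_3$; one then checks, e.g., $\tfrac{u_w}{2}E_++[V,E_+]=-\tfrac12He^{u/2}\sigma_3$, recovering $\varPsi_{zw}=\tfrac12 He^uN$ and all the stated entries. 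So the proof goes through exactly as you outline once the model triple is fixed compatibly with \eqref{eq:GaussMap}; as written, your three defining relations are mutually inconsistent and would produce $-U$, $-V$ on the off-diagonal part.
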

  We call $F : \mathcal M \to SL(2, \mathbb C)$ the {\it moving
 frame} of $\varPsi$. Then the compatibility condition for the equations
 in \eqref{eq:Lax-complex} is
 \begin{equation}\label{eq:compatibilit}
 U_w - V_z + [V, U] = 0 .
 \end{equation}
 A direct computation shows that the equation \eqref{eq:compatibilit} can be rephrased as follows:
 \begin{equation}\label{eq:GC}
 \left\{
 \begin{array}{lcr}
 u_{zw} - 2 R Q e^{-u} + \frac{1}{2}H^2 e^u =0,\\[0.2cm]
 Q_w - \frac{1}{2} H_z e^u= 0, \\[0.2cm]
 R_z - \frac{1}{2} H_w e^u = 0.
 \end{array}
\right.
 \end{equation}
 The first equation in \eqref{eq:GC} will be called the {\it 
 complex Gau{\ss} equation}, and the second and third equations in
 \eqref{eq:GC} will be called the {\it complex Codazzi
 equations}. 
 
 From the discussion above we know that all holomorphic null immersions $\varPsi :
 \mathcal M \rightarrow \mathbb C^3$ satisfy the complex Gau{\ss}-Codazzi
 equations \eqref{eq:GC}. We note that, setting $\alpha = F^{-1} d
 F = U dz + V dw$, the equations in \eqref{eq:GC} are equivalent to 
 $$d \alpha + \frac{1}{2} [\alpha \wedge \alpha] = 0.$$

 Using the functions $u$, $Q$, $R$ and $H$ defined in
 \eqref{eq:nullmetric} and \eqref{eq:U-V} respectively, the symmetric quadratic form
 $I\!I : = - \langle d \varPsi , d N \rangle$ can be represented as follows:
 \begin{equation}\label{eq:second-fund}
 I\!I :=- \langle d \varPsi , d N\rangle = Q dz^2 + e^u H dz dw + R dw^2\;.
 \end{equation}
 The symmetric quadratic form $I \! I$ is called the {\it second
 fundamental form} for a holomorphic null immersion $\varPsi$.
 Then the complex mean curvature and the complex Gau{\ss} curvature for a
 holomorphic null immersion $\varPsi$ are defined as follows.
\begin{Definition}\label{def:Mean}
 Let $\varPsi : \mathcal M \to \mathbb C^3$ be a holomorphic null
 immersion. Then the function $H = 2 e^{-u}\langle \varPsi_{z
 w}, N \rangle$ will be called the {\rm complex mean curvature} of $\varPsi$.
\end{Definition}

\begin{Remark}\label{Rem:Tr}
 From the forms of the holomorphic metric $g$ defined in \eqref{eq:nullmetric}
 and the second fundamental form $I\!I$ defined in
 \eqref{eq:second-fund}, the equality $H = \tfrac{1}{2}{\rm Tr} (\tilde
 I^{-1} \cdot \widetilde {I\!I})$ holds, where
 $\tilde I$ (resp. $\widetilde {I\!I}$) is the coefficient matrix of $g$ (resp. $I\!I$).
\end{Remark}

\begin{Definition}\label{def:Gauss}
 We retain the notation in Remark \ref{Rem:Tr}. 
 Then the function $K = {\rm det} (\tilde I^{-1}  \cdot \widetilde
 {I\!I}) = H^2 - 4 e^{-2u} Q R$ will be called the {\rm complex Gau{\ss}
 curvature} of $\varPsi$.
\end{Definition}

\subsection{Complex CMC and CGC immersions in $\mathbb
  C^3$}\label{subsc:complexCMC}
 In this subsection, we give characterizations of complex constant
 mean curvature immersions via loop groups, see Appendix \ref{loopgroups} 
 for the definitions of loop groups.
 There is a useful formula representing complex {\sc CMC}-immersions, 
 which is a generalization of the Sym formula
 for {\sc CMC}-immersions in $\mathbb R^3$, see also
 \cite{DK:cyl}. There is also a formula for complex {\sc
 CGC}-immersions given by the parallel holomorphic immersions of complex
 {\sc CMC}-immersions with $H \in \mathbb C^*$.

 The notions of a complex  {\sc CMC}-immersion and a {\sc CGC}-immersion are defined
 analogous to the notions of a {\sc CMC}-immersion and a {\sc
 CGC}-immersion in $\mathbb R^3$, see also \cite{DKP:Complex}.
\begin{Definition}
 Let $\varPsi: \mathcal M \to \mathbb C^3$ be a holomorphic null
 immersion, and let $H$ (resp. K) be its complex mean curvature (resp. Gau{\ss} curvature).
 Then $\varPsi$ is called a {\it complex constant mean curvature ({\sc CMC} for short)
 immersion}  (resp. a {\it complex constant Gau{\ss} curvature ({\sc
 CGC} for short) immersion}) if $H$ (resp. $K$) is a complex constant.
\end{Definition}

\begin{Remark}
 Since we are interested in complexifications of {\sc CMC}
 (resp. {\sc CGC}) surfaces with nonzero mean curvature $H \in \mathbb R^*$
 (resp. Gau{\ss} curvature $K \in \mathbb R^*$), from now on, we always assume that the
 complex mean curvature $H$ (resp. the complex Gau{\ss} curvature $K$) is 
 a nonzero constant. 
\end{Remark}

 From \cite{DKP:Complex}, we quote the following characterizations
 of a complex {\sc CMC}-immersion:
\begin{Lemma}
 \label{lem:complex-CMC}
 Let $\mathcal M$ be a connected 2-dimensional Stein manifold,
 and let $\varPsi :\mathcal M \to \mathbb C^3 \cong \mathfrak{sl}(2, \mathbb C)$
 be a holomorphic null immersion. Further, let $Q$, $R$, $H$ and $N$ be the complex
 functions defined in \eqref{eq:U-V} and the {Gau\ss} 
 map defined in \eqref{eq:GaussMap}, respectively.
 Then the following statements are equivalent:
 \begin{enumerate}
 \item $H$ is a nonzero constant;
 \item $Q$ depends only on $z$ and $R$ depends only on
 $w$;
 \item $N_{z w} = \rho N$, for some holomorphic function $\rho :
       \mathcal M\rightarrow \mathbb C$.
 \item\label{itm:extend} There exists $\tilde F(z, w, \lambda) \in
       \Lambda SL(2, \mathbb C)_{\sigma}$ such that 
\begin{equation*}
\begin{array}{l}
       \tilde F(z, w, \lambda)^{-1} d\tilde F(z, w, \lambda)
	=\tilde U dz + \tilde V dw, \\

\end{array}
\end{equation*}
       where 
       \begin{equation*}
      \left\{
\begin{array}{lcr}
\tilde U =
\begin{pmatrix}
\frac{1}{4} u_z & -\frac{1}{2}\lambda^{-1} H e^{u/2} \\
\lambda^{-1} Q e^{-u/2} & -\frac{1}{4} u_{z}
\end{pmatrix},
 \\ \\
\tilde V =
\begin{pmatrix}
-\frac{1}{4} u_w & - \lambda R e^{-u/2} \\
\frac{1}{2}\lambda H e^{u/2} & \frac{1}{4} u_w
\end{pmatrix},
\end{array}
\right.
\end{equation*}

 \end{enumerate}
 and $\tilde F(z, w, \lambda =1)=F(z, w)$ is the moving frame of $\varPsi$ in \eqref{eq:Lax-complex}.
\end{Lemma}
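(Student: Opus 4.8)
The plan is to prove the four statements equivalent through the web (1) $\Leftrightarrow$ (2) $\Leftrightarrow$ (3) and (2) $\Leftrightarrow$ (4), using the complex Gauss--Codazzi equations \eqref{eq:GC} together with the Maurer--Cartan form $\alpha = Udz + Vdw$ of \eqref{eq:Lax-complex} as the main tools; throughout, the standing assumption $H\neq 0$ from the preceding Remark supplies the word ``nonzero'' in statement (1). The equivalence (1) $\Leftrightarrow$ (2) is immediate from the two complex Codazzi equations in \eqref{eq:GC}: since $e^u$ never vanishes, $Q_w = \tfrac12 H_z e^u$ gives $Q_w = 0 \Leftrightarrow H_z = 0$ and $R_z = \tfrac12 H_w e^u$ gives $R_z = 0 \Leftrightarrow H_w = 0$, so on a connected domain $H$ is constant precisely when $Q$ is independent of $w$ and $R$ is independent of $z$.

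For (1)/(2) $\Leftrightarrow$ (3), I would first record from the moving-frame construction of Theorem \ref{thm:MovingFrame} that the Gauss map \eqref{eq:GaussMap} is the adjoint image $N = -\tfrac{i}{2}\,F\sigma_3 F^{-1}$, the natural diagonal generator with $\langle N,N\rangle = 1$; identifying this with \eqref{eq:GaussMap} is the one subtle bookkeeping point, as the excerpt does not state it directly. Granting it, $F_z = FU$ and $(F^{-1})_z = -UF^{-1}$ turn $N_z,N_w$ into the conjugated brackets $-\tfrac{i}{2}F[U,\sigma_3]F^{-1}$ and $-\tfrac{i}{2}F[V,\sigma_3]F^{-1}$, and a second differentiation gives
\[
N_{zw} = -\tfrac{i}{2}\,F\bigl([V,[U,\sigma_3]] + [U_w,\sigma_3]\bigr)F^{-1}.
\]
A direct computation of this $\mathfrak{sl}(2,\mathbb C)$-valued expression $S$ shows that its off-diagonal entries are exactly $H_w e^{u/2}$ and $2Q_w e^{-u/2}$, while its diagonal part equals $-(2RQe^{-u} + \tfrac12 H^2 e^u)\sigma_3$. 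Hence $N_{zw}$ is proportional to $\sigma_3$, i.e. $N_{zw} = \rho N$, if and only if $H_w = 0$ and $Q_w = 0$; by the Codazzi equations this is equivalent to $H$ being constant, with $\rho = -(2RQe^{-u} + \tfrac12 H^2 e^u)$.

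For (2) $\Leftrightarrow$ (4), I would introduce the spectral parameter via the displayed $\tilde U,\tilde V$ and test flatness $\tilde U_w - \tilde V_z + [\tilde V,\tilde U] = 0$ for all $\lambda$. Writing $\tilde U = U_0 + \lambda^{-1}U_{-1}$ and $\tilde V = V_0 + \lambda V_1$ with $U_0,V_0$ diagonal and $U_{-1},V_1$ off-diagonal, and collecting powers of $\lambda$, the $\lambda^0$ coefficient reproduces the complex Gauss equation (automatic for a null immersion), the $\lambda^{-1}$ coefficient yields $H_w = 0$ and $Q_w = 0$, and the $\lambda^{1}$ coefficient yields $H_z = 0$ and $R_z = 0$, which is exactly condition (2). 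Conversely, under (2) the connection $\tilde\alpha(\lambda) = \tilde U dz + \tilde V dw$ is flat for every fixed $\lambda$, so since $\mathcal M$ is simply connected it integrates to $\tilde F(z,w,\lambda)\in SL(2,\mathbb C)$; because the diagonal part of $\tilde\alpha$ is even in $\lambda$ and fixed by $\operatorname{Ad}\sigma_3$ while the off-diagonal part is odd in $\lambda$ and negated by $\operatorname{Ad}\sigma_3$, one checks $\tilde\alpha(-\lambda) = \operatorname{Ad}(\sigma_3)\,\tilde\alpha(\lambda)$, so the normalized solution lies in $\Lambda SL(2,\mathbb C)_\sigma$ and satisfies $\tilde F|_{\lambda=1} = F$ by construction.

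The main obstacle is organizational rather than conceptual: the two bracket computations — the $N_{zw}$ identity resolved in the $\sigma_3$-eigenbasis, and the $\lambda$-graded expansion of the flatness condition — must be carried out with care, and the normalization $N = -\tfrac{i}{2}F\sigma_3 F^{-1}$ linking \eqref{eq:GaussMap} to the frame $F$ has to be pinned down before the harmonicity computation for (3) can proceed.
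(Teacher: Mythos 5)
The paper gives no proof of this lemma at all --- it is quoted verbatim from \cite{DKP:Complex} --- so there is nothing internal to compare against; judged on its own, your argument is correct and is the standard one. The Codazzi step for $(1)\Leftrightarrow(2)$, the bracket computation showing the off-diagonal part of $F^{-1}N_{zw}F$ is $\bigl(\begin{smallmatrix}0 & H_we^{u/2}\\ 2Q_we^{-u/2} & 0\end{smallmatrix}\bigr)$, and the $\lambda$-graded expansion of the zero-curvature equation all check out. The one loose end is exactly the one you flag: the identity $N=\pm\tfrac{i}{2}\,\mathrm{Ad}(F)\sigma_3$ linking \eqref{eq:GaussMap} to the moving frame of Theorem \ref{thm:MovingFrame} is itself only asserted in this paper (in Section \ref{subsc:gaussmap}, by appeal to Theorem \ref{thm:Sym-Bob}) and really comes from the frame construction in \cite{DKP:Complex}; since $\langle N,N\rangle=1$ and $N$ is orthogonal to $d\varPsi$, it is forced up to sign, which is all your computation needs.
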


 The $\tilde F(z, w, \lambda)$ defined in \eqref{itm:extend} of
 Lemma \ref{lem:complex-CMC} is called the {\it complex extended framing} of a complex
 {\sc CMC}-immersion $\varPsi$.
 From now on, for simplicity, the symbol $F(z, w, \lambda)$
 (resp. $U(z, w, \lambda)$ or $V(z, w, \lambda)$) is used instead of $\tilde
 F(z, w, \lambda)$ (resp. $\tilde U(z, w, \lambda)$ or $\tilde V(z, w,
\lambda)$).

 There is an immersion formula for a complex {\sc CMC}-immersion using
 the complex extended framing $F(z, w, \lambda)$ for a complex {\sc
 CMC}-immersion $\varPsi$, the so-called ``Sym formula'', see
 \cite{DKP:Complex}. We show a similar immersion formula for a
 complex {\sc CGC}-immersion using the same complex extended framing
 $F(z, w, \lambda)$ of a complex {\sc CMC}-immersion $\varPsi$.
 \begin{Theorem}
 \label{thm:Sym-Bob}
  Let $F(z, w, \lambda)$ be the complex extended framing of some complex
  CMC-immersion defined as in
  Lemma \ref{lem:complex-CMC}, and let $H$ be its nonzero complex constant mean curvature. We set
 \begin{equation}\label{eq:Sym-Bobenko}
\left\{
\begin{array}{l}
 \displaystyle \varPsi = -\frac{1}{2H}\left( i \lambda \partial_{\lambda} F (z, w, \lambda)
  \cdot F(z, w, \lambda)^{-1} + \frac{i}{2}F(z, w, \lambda) \sigma_3 F(z, w, \lambda)^{-1}\right), \\[0.1cm]
 \displaystyle  \varPhi = -\frac{1}{2H}\left( i \lambda \partial_{\lambda} F (z, w, \lambda)
  \cdot F(z, w, \lambda)^{-1} \right),
 \end{array}
\right.
 \end{equation}
 where $\sigma_3$ has been defined in \eqref{eq:ident2}.
 Then $\varPsi$ (resp. $\varPhi$) is, for
  every $\lambda \in \mathbb C^\ast$, a complex constant mean curvature
  immersion (resp. complex constant Gau{\ss}ian curvature
  immersion, possibly degenerate) in
  $\mathbb C^3$ with complex mean curvature $H \in  \mathbb C^*$
  (resp. complex Gau{\ss}
  curvature $K = 4 H^2 \in \mathbb C^*$),
  and the Gau{\ss} map of  $\varPsi$ (resp. $\varPhi$) can be described
  by $\tfrac{i}{2}F(z, w, \lambda)\sigma_3 F(z, w, \lambda)^{-1}$.
 \end{Theorem}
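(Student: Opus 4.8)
The plan is to reduce everything to two differentiation rules and then to routine $2\times2$ matrix algebra using the explicit $U,V$ of Lemma \ref{lem:complex-CMC}. First I would establish
\[
\partial_z\!\left(\lambda\partial_\lambda F\cdot F^{-1}\right)=\lambda F(\partial_\lambda U)F^{-1},\qquad
\partial_w\!\left(\lambda\partial_\lambda F\cdot F^{-1}\right)=\lambda F(\partial_\lambda V)F^{-1},
\]
which follow by interchanging $\partial_z$ (resp.\ $\partial_w$) with $\partial_\lambda$ and substituting $F_z=FU$, $F_w=FV$; together with the companion identities $\partial_z(F\sigma_3 F^{-1})=F[U,\sigma_3]F^{-1}$ and $\partial_w(F\sigma_3 F^{-1})=F[V,\sigma_3]F^{-1}$. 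Since $\det F\equiv1$ forces $\operatorname{Tr}(\partial_\lambda F\cdot F^{-1})=0$, both $\lambda\partial_\lambda F\cdot F^{-1}$ and $F\sigma_3F^{-1}$ lie in $\mathfrak{sl}(2,\mathbb C)$, so $\varPsi$ and $\varPhi$ are genuinely $\mathbb C^3$-valued.

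Next I would differentiate $\varPsi$. Substituting the explicit $\partial_\lambda U,\partial_\lambda V,[U,\sigma_3],[V,\sigma_3]$, the diagonal and one off-diagonal entry of the two contributions cancel, leaving $\varPsi_z=-\tfrac{i}{2}\lambda^{-1}e^{u/2}F E_+F^{-1}$ and $\varPsi_w=-\tfrac{i}{2}\lambda e^{u/2}F E_-F^{-1}$, where $E_+,E_-$ are the upper- and lower-triangular nilpotents. Because $E_\pm^2=0$ one gets $\langle\varPsi_z,\varPsi_z\rangle=\langle\varPsi_w,\varPsi_w\rangle=0$, while $\operatorname{Tr}(E_+E_-)=1$ gives $\langle\varPsi_z,\varPsi_w\rangle=\tfrac12 e^u$; hence the induced metric is $e^u\,dz\,dw$, so $(z,w)$ remain null coordinates and $\varPsi$ is a holomorphic null immersion with the same conformal factor $u$. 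Feeding $\varPsi_z,\varPsi_w$ into \eqref{eq:GaussMap} and using $[E_-,E_+]=-\sigma_3$ yields $N=\tfrac{i}{2}F\sigma_3F^{-1}$. Finally, differentiating $\varPsi_z$ once more in $w$ (via $\partial_w(FE_+F^{-1})=F[V,E_+]F^{-1}$) collapses to $\varPsi_{zw}=\tfrac{i}{4}He^{u}F\sigma_3F^{-1}$, so $2e^{-u}\langle\varPsi_{zw},N\rangle=H$ by Definition \ref{def:Mean}; as $H$ is a nonzero constant, $\varPsi$ is a complex CMC-immersion of mean curvature $H$.

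For $\varPhi$ the key observation is $\varPhi=\varPsi+\tfrac{1}{2H}N$, i.e.\ $\varPhi$ is the parallel immersion of $\varPsi$. I would first verify that $N=\tfrac{i}{2}F\sigma_3F^{-1}$ is still orthogonal to $\varPhi_z,\varPhi_w$ and of unit length, so it is also the Gauss map of $\varPhi$. The delicate point, and the main obstacle, is that $(z,w)$ are no longer null for $\varPhi$, so the Gauss curvature must be extracted from the full fundamental forms rather than from \eqref{eq:GaussMap}. Writing $\varPhi_z=\tfrac{i}{2}FM_zF^{-1}$, $\varPhi_w=\tfrac{i}{2}FM_wF^{-1}$ with $M_z,M_w$ off-diagonal, I would assemble the first fundamental form matrix $\tilde I_\varPhi$ and, using $N_z,N_w$, the second fundamental form matrix $\widetilde{I\!I}_\varPhi$; a short computation shows $\widetilde{I\!I}_\varPhi$ is purely off-diagonal.

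Setting $X:=H^{-2}QRe^{-u}$, both determinants factor through the same quantity:
\[
\det\widetilde{I\!I}_\varPhi=-H^2\bigl(\tfrac14 e^u-X\bigr)^2,\qquad
\det\tilde I_\varPhi=-\tfrac14\bigl(\tfrac14 e^u-X\bigr)^2,
\]
so that $K=\det(\tilde I_\varPhi^{-1}\widetilde{I\!I}_\varPhi)=4H^2$ by Definition \ref{def:Gauss}. The locus $\tfrac14 e^u-X=0$ is exactly $K_\varPsi=H^2-4e^{-2u}QR=0$, where $\det\tilde I_\varPhi$ vanishes and $\varPhi$ degenerates; this accounts for the ``possibly degenerate'' clause, while off this locus $\varPhi$ is a complex CGC-immersion with $K=4H^2$. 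I expect the only subtleties to be bookkeeping of the scalar and sign conventions in the bilinear form \eqref{eq:bilinear} and in the symmetric products defining the two fundamental forms, and the final factorization that produces the clean $\lambda$-independent constant $4H^2$.
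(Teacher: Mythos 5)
Your proof is correct and, for the CGC part $\varPhi$ --- the part the paper actually proves --- it follows essentially the same route: exploit $\varPhi = \varPsi + \tfrac{1}{2H}N$, compute the first and second fundamental forms using the ${\rm Ad}(F)$-invariance of the trace, observe that $\widetilde{I\!I}$ is purely off-diagonal, and read off $K=\det(\tilde I^{-1}\widetilde{I\!I})=4H^2$ together with the degeneracy locus $e^{2u}=4H^{-2}QR$ (your determinants agree with the paper's \eqref{eq:firstfork} and \eqref{eq:secondfork} after substituting $X=H^{-2}QRe^{-u}$). The only divergence is at the CMC part: the paper simply cites \cite{DKP:Complex} for $\varPsi$, whereas you rederive it directly via $\varPsi_z=-\tfrac{i}{2}\lambda^{-1}e^{u/2}FE_+F^{-1}$ and $\varPsi_w=-\tfrac{i}{2}\lambda e^{u/2}FE_-F^{-1}$; these identities check out (in particular $\lambda\partial_\lambda U+\tfrac12[U,\sigma_3]$ is indeed a multiple of $E_+$, $[E_-,E_+]=-\sigma_3$ gives $N=\tfrac{i}{2}F\sigma_3F^{-1}$, and $\varPsi_{zw}=\tfrac{i}{4}He^{u}F\sigma_3F^{-1}$ yields $2e^{-u}\langle\varPsi_{zw},N\rangle=H$), so this portion is a legitimate self-contained substitute for the citation rather than a gap.
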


\begin{proof}
 The proof for complex {\sc CMC}-immersions follows from
 \cite{DKP:Complex}.  We show that the second formula $\varPhi$ in
 \eqref{eq:Sym-Bobenko} defines a complex {\sc CGC}-immersion.
 Let $\varPhi$ be a map in the second formula in \eqref{eq:Sym-Bobenko}.
 Let $N=\tfrac{i}{2}F(z, w, \lambda)\sigma_3 F(z, w, \lambda)^{-1}$ be
 the complex Gau{\ss} map for $\varPsi$. Since $\langle N, N \rangle =1$
 and the relation $\varPhi = \varPsi + \tfrac{1}{2 H} N$ 
 holds for the formulas in \eqref{eq:Sym-Bobenko}, $N$ is also the Gau{\ss} map for
 $\varPhi$, i.e., $\langle \varPhi_{z}, N\rangle$ =
 $\langle\varPhi_{w}, N \rangle = 0$.  We also denote the Gau{\ss} map for
 $\varPhi$ by $N$. 

 We then compute the holomorphic metric $g$ and the second fundamental
 form $I\!I$ for the holomorphic map $\varPhi$. Using the bilinear form
 in \eqref{eq:bilinear}, we have
\begin{equation*}
\left\{
 \begin{array}{l}
  \displaystyle \langle \varPhi_{z}, \varPhi_{z}\rangle = -2 {\rm Tr} \left(\varPhi_{z}
  \cdot \varPhi_{z} \right) = \frac{\lambda^2}{2 H^2} {\rm Tr}\left({\rm
  Ad} (F)U_{\lambda}^2\right), \\
 \displaystyle \langle \varPhi_{w}, \varPhi_{w} \rangle = -2 {\rm Tr} \left(\varPhi_{w}
  \cdot \varPhi_{w} \right) =\frac{\lambda^2}{2 H^2} {\rm Tr}
 \left({\rm  Ad} (F)V_{\lambda}^2\right),
\end{array}
\right.
 \end{equation*}
 where $U$ and $V$ are defined in Lemma \ref{lem:complex-CMC}, and the
 subscript $z$ (resp. $w$ or $\lambda$) denotes the partial derivative
 with respect to $z$ (resp. $w$ or $\lambda$). Since the trace of a matrix
 is invariant under the map ${\rm Ad} (F)$, and using the form of $U$ in
 Lemma \ref{lem:complex-CMC}, we  have
\begin{equation*}
\left\{
 \begin{array}{l}
 \displaystyle \langle \varPhi_{z}, \varPhi_{z}\rangle = \frac{\lambda^2}{2 H^2} {\rm Tr}
  (U_{\lambda}^2) = -\frac{1}{2 H^2}
 \lambda^{-2} HQ, \\[0.3cm]

\displaystyle \langle \varPhi_{w}, \varPhi_{w}\rangle  = \frac{\lambda^2}{2 H^2} {\rm Tr}
  (V_{\lambda}^2) =-\frac{1}{ 2H^2}
\lambda^2 H R.
\end{array}
\right.
 \end{equation*}
 Using again the invariace of trace of a matrix under the map ${\rm Ad (F)}$ and
 the forms of $U$ and $V$ in Lemma \ref{lem:complex-CMC}, we have
\begin{flalign*}
 \langle \varPhi_{z}, \varPhi_{w}\rangle &= -2 {\rm Tr}\left( -\frac{i \lambda}{2 H} {\rm
  Ad} (F)U_{\lambda} \times-\frac{i\lambda}{2 H} {\rm
  Ad} (F)V_{\lambda} \right)\\  &= \frac{1}{2 H^2} \left(\frac{1}{4} H^2 e^u +
 Q R e^{-u}\right).
\end{flalign*}
 Therefore, we have the following first fundamental form for the
 holomorphic map $\varPhi$:
 \begin{equation}\label{eq:firstfork}
 g = \begin{pmatrix}dz &  dw\end{pmatrix}
     \begin{pmatrix} 
     -\frac{1}{2 H^2} \lambda^{-2} H Q & \frac{1}{2 H^2}\left( \frac{1}{4} H^2 e^u + Q R e^{-u}\right) \\[0.5cm]
     \frac{1}{2 H^2}\left( \frac{1}{4} H^2 e^u + Q R e^{-u}\right) & -\frac{1}{2 H^2} \lambda^{2} H R 
    \end{pmatrix} 
 \begin{pmatrix} dz \\ dw\end{pmatrix}
\;.
\end{equation}
 Let $\tilde I$ denote the coefficient matrix for $g$ in
 \eqref{eq:firstfork}. Then it is easy to verify that $\det \tilde I =
 -(\frac{1}{4} H^2 e^u - Q R e^{-u})^2/(4 H^4)$. 
 Thus the holomorphic map $\varPhi$  actually defines a holomorphic immersion
 under the condition $e^{2 u} \neq 4 H^{-2} Q R$.

 Next, the second fundamental form $I\!I$ for $\varPhi$ is computed
 as follows. Using again the invariace of the trace of a matrix under the map ${\rm Ad (F)}$, 
 $\langle \varPhi_{z}, N_{z}\rangle$ is computed as follows:
 \begin{flalign*}
 \langle \varPhi_{z}, N_{z}\rangle  & = -2 {\rm Tr}\left( -\frac{i
  \lambda}{2 H} {\rm Ad} (F)U_{\lambda}
  \times \frac{i}{2} {\rm Ad} (F) [U,\sigma_3]\right)
  \\ & = -\frac{1}{2 H} {\rm Tr}
  (U_{\lambda}\cdot [U, \sigma_3])\;.
\end{flalign*}
 From the form of $U$ in Lemma
 \ref{lem:complex-CMC}, we have $\langle \varPhi_{z}, N_{z}\rangle = 0$.
 A similar argument holds for $\langle \varPhi_{w}, N_{w} \rangle$, where $U$ is replaced by $V$.
 Thus we have  $\langle \varPhi_{w}, N_{w} \rangle = 0$.  
 Using again the invariance of the trace of a matrix under the map ${\rm Ad} (F)$
 and the form of $U$ in Lemma
 \ref{lem:complex-CMC}, we obtain
 \begin{flalign*}
 \langle \varPhi_{z}, N_{w}\rangle &= -2 {\rm Tr}\left( -\frac{i
  \lambda}{2 H} {\rm Ad} (F)U_{\lambda}
  \times \frac{i}{2} {\rm Ad} (F) [V,\sigma_3]\right)
  \\& = -\frac{1}{2 H} \left(\frac{1}{2}
  H^2 e^u - 2 QR e^{-u}\right).
\end{flalign*}

 Since $N$ is the Gau{\ss} map of $\varPhi$, i.e., $\langle \varPhi_{z}, N\rangle$ =
 $\langle\varPhi_{w}, N \rangle = 0$ and $\langle N,
 N\rangle=1$, we obtain $\langle \varPhi_{z}, 
 N_{w}\rangle =- \langle \varPhi_{wz}, N\rangle = \langle
 \varPhi_{w}, N_{z}\rangle$.

 Finally, the second fundamental form $I\!I$ for $\varPhi$ has 
 the following form:
 \begin{equation}\label{eq:secondfork}
 I\!I = \begin{pmatrix}dz &  dw\end{pmatrix} 
        \begin{pmatrix} 0 &  \frac{1}{2 H}\left(     \frac{1}{2} H^2 e^u -2 Q R e^{-u}\right) \\[0.5cm]
                      \frac{1}{2 H}\left( \frac{1}{2} H^2 e^u -2 Q R e^{-u}\right)& 0 
        \end{pmatrix}
        \begin{pmatrix} dz \\ dw\end{pmatrix}
       \;.
 \end{equation}
 Let us denote the coefficient matrix of $I\!I$ by $\widetilde {I\!I}$. 
 Then, using  \eqref{eq:firstfork} and \eqref{eq:secondfork}, the
 complex Gau{\ss} curvature $K$ for $\varPhi$ is computed as
 \begin{equation*}
 K = {\rm det }( \tilde I^{-1} \cdot \widetilde {I\!I} ) = 4 H^2 \in
  \mathbb C^*\;.
 \end{equation*}
  This completes the proof.
\end{proof}
 Since the Gau{\ss} maps of a complex {\sc CMC}-immersion $\varPsi$
 and the corresponding complex {\sc CGC}-immersion $\varPhi$ are the same, which is
 $N = \tfrac{i}{2}F(z, w, \lambda)\sigma_3 F(z, w, \lambda)^{-1}$,
 we have the following corollary:
\begin{Corollary}
 Let $\varPsi$ be a complex {\sc CMC}-immersion, and let $H$ (resp. $u$,
 $Q$ and $R$) be its nonzero constant mean curvature (resp. the
 functions defined in \eqref{eq:nullmetric} and Theorem \ref{thm:MovingFrame}). 
 Moreover, let us assume $e^{2 u} \neq 4 H^{-2} Q R$. Then there exists the parallel complex
 {\sc CGC}-immersion with Gau{\ss} curvature $K = 4 H^2 \in \mathbb C^*$.
\end{Corollary}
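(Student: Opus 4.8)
The plan is to read off the corollary directly from Theorem \ref{thm:Sym-Bob}, the only additional point being the identification of the resulting complex {\sc CGC}-immersion as the \emph{parallel} immersion of $\varPsi$. First I would invoke Lemma \ref{lem:complex-CMC}: since $\varPsi$ is a complex {\sc CMC}-immersion with nonzero constant mean curvature $H$, its moving frame extends to a complex extended framing $F(z, w, \lambda) \in \Lambda SL(2, \mathbb C)_\sigma$. With this $F$ in hand I would define $\varPhi$ by the second formula in \eqref{eq:Sym-Bobenko}, namely $\varPhi = -\tfrac{1}{2H}\, i\lambda\, \partial_\lambda F \cdot F^{-1}$.

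Next I would apply Theorem \ref{thm:Sym-Bob} to conclude that $\varPhi$ is a complex constant Gau{\ss} curvature immersion with $K = 4H^2 \in \mathbb C^*$. The one subtlety is non-degeneracy: the computation of the first fundamental form \eqref{eq:firstfork} carried out in the proof of Theorem \ref{thm:Sym-Bob} yields $\det \tilde I = -\big(\tfrac{1}{4} H^2 e^u - Q R e^{-u}\big)^2/(4 H^4)$, so $\varPhi$ is a genuine (non-degenerate) immersion precisely when $\tfrac{1}{4} H^2 e^u \neq Q R e^{-u}$, i.e. when $e^{2u} \neq 4 H^{-2} Q R$. This is exactly the hypothesis imposed in the statement, so under that assumption $\varPhi$ is a complex {\sc CGC}-immersion of the asserted Gau{\ss} curvature.

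Finally I would verify that $\varPhi$ is parallel to $\varPsi$. Here I would use the relation $\varPhi = \varPsi + \tfrac{1}{2H} N$ established for the formulas in \eqref{eq:Sym-Bobenko}, where $N = \tfrac{i}{2} F \sigma_3 F^{-1}$ is the common Gau{\ss} map of $\varPsi$ and $\varPhi$. Since $\langle N, N \rangle = 1$ and the displacement factor $\tfrac{1}{2H}$ is a nonzero constant, $\varPhi$ is obtained from $\varPsi$ by translating each point a fixed distance $\tfrac{1}{2H}$ along the transversal vector $N$, which is precisely the defining property of a parallel immersion. The main obstacle is essentially bookkeeping rather than analysis: one must confirm that the shared-Gau{\ss}-map relations $\langle \varPhi_z, N \rangle = \langle \varPhi_w, N \rangle = 0$, already recorded in the proof of Theorem \ref{thm:Sym-Bob}, make the normal direction coherent for both immersions, so that the constant-distance translation along $N$ is the honest geometric notion of parallelism rather than a mere algebraic coincidence.
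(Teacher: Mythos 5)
Your proposal is correct and follows essentially the same route as the paper, which states this corollary as an immediate consequence of Theorem \ref{thm:Sym-Bob}: the paper's justification is precisely the observation that $\varPsi$ and $\varPhi$ share the Gau{\ss} map $N = \tfrac{i}{2}F\sigma_3 F^{-1}$, so that $\varPhi = \varPsi + \tfrac{1}{2H}N$ is the parallel immersion, with the non-degeneracy condition $e^{2u} \neq 4H^{-2}QR$ coming from the determinant computation in the proof of that theorem exactly as you describe.
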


\subsection{Real forms of $\Lambda \mathfrak{sl}(2, \mathbb
  C)_{\sigma}$}\label{subsc:realform}
 In this subsection, we give the classification of real forms
 for the twisted $\mathfrak{sl}(2, \mathbb C)$ loop algebra $\Lambda
 \mathfrak{sl}(2, \mathbb C)_{\sigma}$, see Appendix \ref{BasicResult}
 for the notation and the definitions of loop algebras.

 First we recall the basic facts about real forms for complex Kac-Moody 
 Lie algebras.
 Let $\mathfrak g$ be a Kac-Moody Lie algebra over $\mathbb C$. Then a Lie
 subalgebra $\mathfrak g_{\mathbb R} \subset \mathfrak g$ over
 $\mathbb R$ will be called the {\it real form} of $\mathfrak g$ if there is an
 isomorphism between $\mathfrak g$ and the complexification $\mathfrak
 g_{\mathbb R} \otimes \mathbb C$.
 We note that all real forms of $\mathfrak g$ correspond to semi-linear
 involutions of $\mathfrak g$, see, for example
 \cite{BR:Kac-Moody},  i.e., each real form is
 defined by an automorphism $\rho$ of $\mathfrak g$ such that
 \begin{equation}\label{eq:semi-involution}
\left\{
\begin{array}{l}
 \rho^2 = {\rm id}, \\
 \rho(\ell x) = \bar \ell \rho (x)\;\;\; \mbox{for}\;\; \ell \in
  \mathbb C.
 \end{array}
\right.
 \end{equation}

 Let $\mathfrak h$ denote the {\it standard Cartan subalgebra} of $\mathfrak
 g$, which is a maximal $\mbox{ad}(\mathfrak g)$--diagonalizable
 subalgebra of $\mathfrak g$. Let $\Delta$ be the corresponding root
 system, and let $\mathfrak g_{\alpha}$ denote the root space corresponding
 to $\alpha$ in $\Delta$. Then the root space
 decomposition for $\mathfrak g$ is as follows:
 \begin{equation*}
 \mathfrak g = \mathfrak h \oplus \left(\bigoplus_{\alpha \in \Delta}
				   \mathfrak g_{\alpha}\right)
 \;.
 \end{equation*}
 It is known that $\Delta$ can be decomposed as $\Delta = \Delta^{+}
 \cup \Delta^{-}$, where $\Delta^{+}$ (resp. $\Delta^{-}$) is the set of
 positive (resp. negative) roots.  

 Then a subalgebra of $\mathfrak g$ is said to be a {\it Borel subalgebra} if it is a maximal
 completely solvable subalgebra. And the {\it standard positive (resp. negative) Borel
 subalgebra} $\mathfrak b^{+}$ (resp. $\mathfrak b^{-}$) of $\mathfrak
 g$ is defined as follows:
 \begin{equation*}
 \mathfrak b^{\pm} = \mathfrak h \oplus \left(\bigoplus_{\beta \in \Delta^{\pm}}
				   \mathfrak g_{\beta}\right)
 \;.
 \end{equation*}
 If a linear or semi-linear automorphism $\rho$ for $\mathfrak g$ transforms a
 Borel subalgebra into a Borel subalgebra of the same (resp. opposite)
 sign, then $\rho$ is said to be the {\it first kind} (resp. {\it second kind}).

 We now give definitions of the almost split real forms and the almost
 compact real forms of $\mathfrak g$.
\begin{Definition}
 Let $\mathfrak g$ be a Kac-Moody Lie algebra over $\mathbb C$, and let
 $\mathfrak g_{\mathbb R}$ be a real form of $\mathfrak g$. Moreover,
 let  $\rho$ be the  semi-linear involution corresponding to $\mathfrak
 g_{\mathbb R}$.  Then the real
 form $\mathfrak g_{\mathbb R}$ is called {\rm almost split}
 (resp. {\rm almost compact}) if the corresponding semi-linear
 involution $\rho$ is of the first kind (resp. second kind).
\end{Definition}

 It is clear that the real subalgebra of $\mathfrak g$ generated by 
 $\{\mathfrak h, e_j, f_j \;;\;j = 1, 2, \dots, n \}$, the Cartan
 subalgebra and the Chevalley generators of
 the Kac-Moody Lie algebra $\mathfrak g$, see Appendix
 \ref{subsc:Affine}, is
 an almost split real form, which is called the {\it standard split
 form}. The corresponding semi-linear involution of 
 the first kind $\sigma_n^{\prime}$ is called the {\it standard normal
 semi-involution} of $\mathfrak g$. The map $e_j \mapsto -f_j$, $f_j
 \mapsto -e_j$ and $h \mapsto -h$, $h \in \mathfrak h$ for  $\{\mathfrak h, e_j,
 f_j \;;\;j = 1, 2, \dots, n \}$ can be extended to an involution
 $\omega$ of $\mathfrak g$. The $\omega$ is called the {\it Cartan
 involution} of $\mathfrak g$. It is known that the standard normal
 semi-involution and the Cartan involution commute.
\begin{Definition}
 Let $\omega$ and $\sigma_n^{\prime}$ be the Cartan involution and the
 standard normal semi-involution respectively, and let
 $\omega^{\prime}$ be $\omega^{\prime} = \sigma_n^{\prime} \omega =
 \omega \sigma_n^{\prime}$. Then $\omega^{\prime}$ is called {\rm the
 standard Cartan semi-involution},
 and the corresponding almost compact real form is called {\rm the
 standard compact form}. Moreover, a conjugation of $\omega^{\prime}$ is
 called a {\rm Cartan semi-involution}.
\end{Definition}
 We quote the following theorem about the real forms of
 Kac-Moody Lie algebras \cite{BBBR:almostsplit},
 \cite{BR:almostcompact}.
\begin{Theorem}[Theorem 4.4 in \cite{BBBR:almostsplit}, Proposition 2.9
 in \cite{BR:almostcompact}]\label{thm:Ross}
 Let us consider the following:
\begin{enumerate}
\item[(1)] The semi-linear involutions $\rho$ of $\mathfrak{g}$ of the
	   second kind (resp. the first kind).
\item[(2)] The involutions $\theta$ of $\mathfrak{g}$ of the first
	   kind (resp. the second kind).
\item[(3)] The relation $\rho \thickapprox \theta$ if and only if
 \begin{itemize}
 \item[(a)] $\omega^{\prime} = \theta \rho = \rho \theta$ is a Cartan semi-involution.
 \item[(b)] $\theta$ and $\rho$ stabilize the same Cartan subalgebra $\mathfrak{h}$.
 \item[(c)] $\mathfrak{h}$ is contained in a minimal $\rho$-stable
	    positive parabolic subalgebra.
 \end{itemize}
\end{enumerate}
 Then the relation induces a bijection between the conjugacy classes
 under $Aut(\mathfrak{g})$ of semi-linear involutions of the second kind
 (resp. the first kind) and conjugacy classes of involutions of
 the first kind (resp. the second kind).
\end{Theorem}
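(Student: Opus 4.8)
The plan is to prove the stated bijection exactly as in finite-dimensional Cartan theory, by reducing everything to the existence and uniqueness up to conjugacy of a Cartan semi-involution compatible with a given involution or semi-involution. Condition (a) places $\rho$ and $\theta$ on a symmetric footing through $\omega' = \theta\rho = \rho\theta$, so the whole statement is invariant under interchanging linear involutions with semi-linear ones (and swapping the two kinds); it therefore suffices to treat one direction, say passing from a semi-linear involution $\rho$ of the second kind to a linear involution $\theta$ of the first kind. Before that, I would check that $\thickapprox$ descends to conjugacy classes under $\mathrm{Aut}(\mathfrak g)$: if $g \in \mathrm{Aut}(\mathfrak g)$ and $\rho \thickapprox \theta$, then conjugating all the data by $g$ preserves each of (a), (b), (c), since $g\omega' g^{-1}$ is again a Cartan semi-involution, $g\mathfrak h$ is a Cartan subalgebra stabilized by both $g\rho g^{-1}$ and $g\theta g^{-1}$, and $g\mathfrak h$ sits inside the minimal $(g\rho g^{-1})$-stable positive parabolic $g\mathfrak p g^{-1}$. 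Hence $g\rho g^{-1} \thickapprox g\theta g^{-1}$, and the correspondence is compatible with simultaneous conjugation.

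For surjectivity (existence), given $\rho$ of the second kind I would produce a Cartan semi-involution $\omega'$ commuting with $\rho$ and then set $\theta := \omega'\rho = \rho\omega'$. Because $\omega'$ and $\rho$ are both semi-linear and commute, the product $\theta$ is $\mathbb C$-linear and satisfies $\theta^2 = \mathrm{id}$, so it is an involution; and since $\omega'$ and $\rho$ are each of the second kind, their product $\theta$ comes out of the first kind via the induced action on the Borel subalgebras $\mathfrak b^{\pm}$. The construction of $\omega'$ is the analogue of selecting the Cartan involution adapted to a real form: using condition (c) one first arranges a $\rho$-stable Cartan subalgebra $\mathfrak h$ lying in a minimal $\rho$-stable positive parabolic, and then adjusts the standard compact structure $\omega'$ by an inner automorphism so that the resulting Cartan semi-involution commutes with $\rho$, with the invariant bilinear form on $\mathfrak g$ playing the role of the Killing form in controlling positivity.

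For injectivity (uniqueness), I would show that if $\rho \thickapprox \theta$ and $\rho' \thickapprox \theta$ with $\rho,\rho'$ both semi-linear of the second kind, then $\rho$ and $\rho'$ are conjugate under $\mathrm{Aut}(\mathfrak g)$. This reduces to the uniqueness up to conjugacy of the Cartan semi-involution attached to $\theta$: any two such choices differ by an inner automorphism commuting with $\theta$, and that automorphism transports $\rho$ to $\rho'$. The reverse direction, starting from a linear involution $\theta$ of the first kind and recovering $\rho = \omega'\theta$ of the second kind, is then handled by the identical argument with the roles of linear and semi-linear exchanged, which is where the symmetry noted at the outset does the work.

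The hard part will be the existence and uniqueness of the compatible Cartan semi-involution in the infinite-dimensional Kac-Moody setting. In finite dimensions this is a maximality or averaging argument over a compact group, but here $\mathfrak g$ is infinite-dimensional, the invariant form is only weakly nondegenerate, and one must pass to suitable completions and work with maximal bounded subgroups of the associated Kac-Moody group. Condition (c), which forces $\mathfrak h$ into a minimal $\rho$-stable positive parabolic subalgebra, is precisely the device that makes the position of the Cartan subalgebra rigid enough for the conjugacy theorems of \cite{BBBR:almostsplit} and \cite{BR:almostcompact} to apply; the remaining delicate bookkeeping is tracking the kind of $\theta$ against the kind of $\rho$ through the action on $\mathfrak b^{\pm}$, so that second kind maps to first kind and conversely.
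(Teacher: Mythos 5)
The first thing to say is that the paper does not prove this statement at all: Theorem \ref{thm:Ross} is imported verbatim from Theorem 4.4 of \cite{BBBR:almostsplit} and Proposition 2.9 of \cite{BR:almostcompact}, and is used as a black box in the proof of Theorem \ref{thm:almostsplit}. So there is no in-paper argument to measure yours against; your proposal has to stand on its own as a proof of the cited result.

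Taken on those terms, it has a genuine gap. Your outline correctly identifies the skeleton (well-definedness on conjugacy classes, surjectivity via $\theta:=\omega'\rho$, injectivity via uniqueness of the compatible Cartan semi-involution, and the parity bookkeeping on $\mathfrak b^{\pm}$), but the two load-bearing steps --- the \emph{existence} of a Cartan semi-involution $\omega'$ commuting with a given $\rho$, and its \emph{uniqueness up to conjugation by automorphisms commuting with} $\theta$ --- are exactly the content of the theorem in the affine Kac--Moody setting, and you defer both to ``the conjugacy theorems of \cite{BBBR:almostsplit} and \cite{BR:almostcompact}'', i.e.\ to the very results being proved. In finite dimensions these steps are Cartan's conjugacy theorem for Cartan involutions; in the Kac--Moody case they require the Peterson--Kac conjugacy machinery, the classification of $\rho$-stable Cartan subalgebras, and a careful use of condition (c) to rigidify the relative position of $\mathfrak h$, none of which you supply. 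Two further points are off: the claimed symmetry ``interchange linear with semi-linear and swap the kinds'' does not reduce one direction to the other, because condition (c) is asymmetric (it is a condition on $\rho$-stable parabolics only), and indeed the first-kind and second-kind cases required two separate papers with genuinely different arguments; and for a bijection on conjugacy classes you must also check the direction you omit, namely that $\rho\thickapprox\theta$ and $\rho\thickapprox\theta'$ force $\theta\sim\theta'$, which needs the uniqueness statement on the $\theta$-side as well as the $\rho$-side.
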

 We also quote the following theorem about the classification of the
 involutions of the affine Kac-Moody Lie algebra of $A_1^{(1)}$ type
 \cite{Kob:AutoKac}.
\begin{Theorem}[Theorem 3 in \cite{Kob:AutoKac}]\label{thm:ZKob}
 All involutions on the affine Kac-Moody Lie algebra of type $A_1^{(1)}$
 are given as follows:
 \begin{itemize}
\item[(a)] $e_1 \longmapsto - e_1, \;\;\;\; f_1 \longmapsto -f_1, \;\;\;\; e_2 \longmapsto
	     -e_2, \;\;\;\; f_2 \longmapsto - f_2$,\vspace{0.2cm}

\item[(a$^{\prime}$)]$e_1 \longmapsto e_2, \;\;\;\; f_1 \longmapsto f_2, \;\;\;\; e_2 \longmapsto
	     e_1, \;\;\;\; f_2 \longmapsto f_1$,\vspace{0.2cm}

\item[(b)]$e_1 \longmapsto e_1, \;\;\;\; f_1 \longmapsto f_1, \;\;\;\; e_2 \longmapsto
	     -\frac{1}{2}[[f_2, f_1],f_1], \;\;\;\; f_2 \longmapsto -
	     \frac{1}{2}[[e_2, e_1],e_1]$,\vspace{0.2cm}

\item[(b$^{\prime}$)]$e_1 \longmapsto -e_1, \;\;\;\; f_1 \longmapsto -f_1, \;\;\;\; e_2 \longmapsto
	     \frac{1}{2}[[f_2, f_1],f_1], \;\;\;\; f_2 \longmapsto 
	     \frac{1}{2}[[e_2, e_1],e_1]$,\vspace{0.2cm}

\item[(b$^{\prime \prime}$)]$e_1 \longmapsto f_2, \;\;\;\; f_1 \longmapsto e_2, \;\;\;\; e_2 \longmapsto
	     f_1, \;\;\;\; f_2 \longmapsto e_1$,\vspace{0.2cm}

\item[(c)]$e_1 \longmapsto e_1, \;\;\;\; f_1 \longmapsto f_1, \;\;\;\; e_2 \longmapsto
	     -e_2, \;\;\;\; f_2 \longmapsto - f_2$,
 \end{itemize}
 where $e_j, f_j$ for $j \in \{1, 2\}$  are the Chevalley generators of
 the affine Kac-Moody Lie algebra of $A_1^{(1)}$ type. Moreover the cases
 {\rm (a), (a$^{\prime}$)} and {\rm (c)} (resp.  {\rm
 (b)}, {\rm (b$^{\prime}$)} and {\rm (b$^{\prime \prime}$)}) 
 are involutions of the first kind (resp. the second kind). 
\end{Theorem}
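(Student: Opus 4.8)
The plan is to classify the involutions, i.e.\ the linear automorphisms $\theta$ of $\mathfrak g = \mathfrak g(A_1^{(1)})$ with $\theta^2 = \mathrm{id}$, up to conjugacy, by first reducing each $\theta$ to a \emph{standard form} relative to the fixed data $(\mathfrak h, \mathfrak b^{+})$ and then enumerating the resulting normal forms separately for the two kinds. Since $\theta$ is semisimple with eigenvalues in $\{\pm 1\}$, the first step is to produce a $\theta$-stable Cartan subalgebra and, by an inner automorphism, to conjugate it onto the standard Cartan subalgebra $\mathfrak h$; at the same time one arranges that $\theta$ maps $\mathfrak b^{+}$ onto a standard Borel. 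By the definition of the two kinds, a first-kind $\theta$ then stabilizes $\mathfrak b^{+}$, while a second-kind $\theta$ sends $\mathfrak b^{+}$ to $\mathfrak b^{-}$. In either case $\theta$ now normalizes $\mathfrak h$ and permutes the simple root spaces up to scalars, hence induces a diagram automorphism of the $A_1^{(1)}$ Dynkin diagram; that group is $\{\mathrm{id}, \tau\}$ with $\tau$ the transposition $1 \leftrightarrow 2$.

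For the \emph{first kind} I would write $\theta(e_j) = c_j\, e_{\pi(j)}$ and $\theta(f_j) = c_j^{-1} f_{\pi(j)}$ with $\pi \in \{\mathrm{id}, \tau\}$, impose $\theta^2 = \mathrm{id}$, and normalize the scalars $c_j$ by conjugating with the torus automorphisms $\exp(\operatorname{ad} t)$, $t \in \mathfrak h$, which rescale $e_j \mapsto e^{\alpha_j(t)} e_j$. When $\pi = \mathrm{id}$ torus conjugation leaves each $c_j$ unchanged and $\theta^2 = \mathrm{id}$ forces $c_j \in \{\pm 1\}$; modulo the swap $\tau$ this yields exactly $(1,1)$, $(-1,-1)$ and $(1,-1) \sim (-1,1)$, that is, the identity together with cases (a) and (c). When $\pi = \tau$ the constraint becomes $c_1 c_2 = 1$, and since $\alpha_2 - \alpha_1$ is a nonzero functional on $\mathfrak h$ the torus can be used to set $c_1 = c_2 = 1$, giving the single class (a$'$). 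This produces the three first-kind normal forms.

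For the \emph{second kind} a standard-form $\theta$ reverses the sign of the null root $\delta$; equivalently, in the realization of $\mathfrak g$ as a twice centrally extended loop algebra, $\theta$ acts by $\lambda \mapsto \lambda^{-1}$ composed with a finite automorphism and a torus normalization. I would again split according to the induced $\pi$: the case $\pi = \tau$ gives, after absorbing signs, the Chevalley-type involution (b$''$) built from the Cartan involution $\omega$ and the swap $\tau$, while $\pi = \mathrm{id}$ gives the two $\delta$-reversing involutions that fix the finite $\mathfrak{sl}(2)$ up to a sign, namely (b) and (b$'$). Concretely, (b) is the loop inversion $\lambda \mapsto \lambda^{-1}$ and (b$'$) is that map twisted by $\operatorname{Ad}(\operatorname{diag}(i,-i))$ on the finite part, which makes both well-definedness and $\theta^2 = \mathrm{id}$ transparent. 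For a purely algebraic verification one instead confirms that the bracketed images $e_2 \mapsto -\tfrac12[[f_2,f_1],f_1]$ and $f_2 \mapsto -\tfrac12[[e_2,e_1],e_1]$ (and their signed analogues) respect the affine Serre relations --- here the relevant relation is $(\operatorname{ad} e_i)^{3} e_j = 0$ for $i \neq j$, since $1 - a_{ij} = 3$ --- and that $\theta^2$ fixes the generators; both reduce to short $\mathfrak{sl}(2)$ computations.

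The main obstacle is the reduction in the first paragraph: unlike the finite-dimensional case, not all Cartan subalgebras of a Kac-Moody algebra are conjugate, so one must invoke the conjugacy theorems for the \emph{standard} (maximally split) Cartan and Borel subalgebras under the inner automorphism group, together with the fact that an involution stabilizes such a subalgebra. A secondary obstacle is \emph{completeness}: one must show that the normal-form analysis exhausts all conjugacy classes and that the six representatives are pairwise non-conjugate. The latter follows by separating them first by kind (the action on $\delta$), then by the induced diagram automorphism $\pi$, and finally by the conjugacy type of the fixed-point subalgebra $\mathfrak g^{\theta}$, which is abelian for (a) but contains a copy of $\mathfrak{sl}(2)$ for (c), and similarly distinguishes (b) from (b$'$).
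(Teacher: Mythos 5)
The paper itself offers no proof of this statement: Theorem~\ref{thm:ZKob} is quoted verbatim from Theorem~3 of \cite{Kob:AutoKac}, so there is nothing internal to compare your argument against and your proposal must stand on its own. Its first-kind half does. Granting the conjugacy theorems that let an involution be normalized relative to the pair $(\mathfrak h,\mathfrak b^{+})$ --- which you rightly single out as the essential nontrivial input --- the enumeration $\theta(e_j)=c_j e_{\pi(j)}$ followed by torus rescaling produces exactly the identity, (a), (a$^{\prime}$) and (c), and your invariants separate them (one small imprecision: $\mathfrak g^{\theta}$ for (a) is not literally abelian once the derivation is included; what matters is that it contains no copy of $\mathfrak{sl}(2,\mathbb C)$, unlike $\mathfrak g^{\theta}$ for (c)).

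The genuine gap is in the second kind, and it is structural. Your opening reduction puts a second-kind involution in the form $\theta(\mathfrak h)=\mathfrak h$, $\theta(\mathfrak b^{+})=\mathfrak b^{-}$, which forces $\theta(e_j)=c_j f_{\pi(j)}$; running the same torus normalization there yields exactly \emph{two} classes, namely the Chevalley involution $\omega\colon e_j\mapsto -f_j$ (for $\pi=\operatorname{id}$) and (b$^{\prime\prime}$) (for $\pi=\tau$). Neither (b) nor (b$^{\prime}$) has this shape: $\theta_{(\mathrm b)}$ fixes $\alpha_1$ and sends $\alpha_2$ to $-2\alpha_1-\alpha_2$, so it carries $\mathfrak b^{+}$ to a negative Borel \emph{other than} $\mathfrak b^{-}$, and --- this is where affine algebras differ from the finite-dimensional case --- it cannot be conjugated into the $\mathfrak b^{+}\mapsto\mathfrak b^{-}$ normal form at all, since that would make it conjugate to $\omega$ or to (b$^{\prime\prime}$), which it is not (its fixed subalgebra contains the constant copy of $\mathfrak{sl}(2,\mathbb C)$; theirs do not). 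So, taken literally, your normal-form analysis finds only two second-kind classes and misses (b), while your second-kind paragraph asserts that $\pi=\operatorname{id}$ yields the two classes (b) and (b$^{\prime}$), contradicting the normalization you set up in the first paragraph. The loop-algebra picture you then switch to ($\lambda\mapsto\lambda^{-1}$ composed with a constant involution, possibly twisted by a degree-one loop) does exhibit the three correct representatives, and your Serre-relation check is fine, but exhaustiveness in that picture needs the structure theorem for the full automorphism group of the affine algebra (every second-kind automorphism is $\operatorname{Ad}$ of a loop composed with $\lambda\mapsto c\lambda^{-1}$ and the action on the center and derivation); that is precisely the content of \cite{Kob:AutoKac} and is nowhere supplied. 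You should also record explicitly that $\omega$ --- the most obvious second-kind involution, and the one your normal form actually produces --- is conjugate to (b$^{\prime}$) by a constant $\operatorname{Ad}$, since otherwise the stated list appears to be missing an entry. The clean way to organize the second kind is the bijection with first-kind involutions recorded in Theorem~\ref{thm:Ross} of this paper, which matches the three second-kind classes (b), (b$^{\prime}$), (b$^{\prime\prime}$) with the three first-kind semi-involutions rather than with diagram automorphisms directly.
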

 It is well known that the twice central
 extensions of the untwisted $\mathfrak{sl}(2, \mathbb C)$ loop algebra
 $\Lambda \mathfrak{sl}(2, \mathbb C)$ is an affine
 Kac-Moody Lie algebra of $A_{1}^{(1)}$ type, see Theorem
 \ref{thm:Kac}.  It is also known that the
 twisted  $\mathfrak{sl}(2, \mathbb C)$ loop algebra $\Lambda
 \mathfrak{sl}(2, \mathbb C)_{\sigma}$ and the
 untwisted $\mathfrak{sl}(2, \mathbb C)$ loop algebra $\Lambda \mathfrak{sl}(2, \mathbb C)$
 are isomorphic by the following map from $\Lambda \mathfrak{sl} (2,
 \mathbb C)$ to $\Lambda \mathfrak{sl} (2, \mathbb C)_\sigma$, see also \cite{Kac:Kac-Moody}:
 \begin{equation}\label{eq:isountwist}
 g(\lambda) \in \Lambda \mathfrak{sl} (2, \mathbb C) \mapsto {\rm Ad}
  \left(\begin{smallmatrix} \sqrt{\lambda} & 0 
  \\0& \sqrt{\lambda}^{-1}\end{smallmatrix}\right)  g(\lambda^2) \in \Lambda \mathfrak{sl} (2, \mathbb C)_{\sigma}\;.
 \end{equation}
 Therefore we have the following
 classification of all real forms for $\Lambda \mathfrak{sl}(2, \mathbb C)_\sigma$.
 \begin{Theorem}\label{thm:almostcompact}
 Let $\mathfrak{c}_j$ for $j \in \{1, 2, 3, 4 \}$ be the following
  involutions on $\Lambda \mathfrak{sl} (2, \mathbb C)_\sigma$:
 \begin{equation}\label{eq:classcom}
 \begin{array}{ll}
 \mathfrak{c}_1 : g(\lambda) \mapsto -\overline{g(-1/\bar
  \lambda)}^{t},\;\; & \mathfrak{c}_2: g(\lambda) \mapsto \overline{g \left(- 1/\bar \lambda\right)},\\
 
 \mathfrak{c}_3: g(\lambda) \mapsto  - \overline{g \left(1/\bar \lambda\right)}^t\;,\;\; &
  \mathfrak{c}_4: g(\lambda) \mapsto
   -{\rm Ad} \left(\begin{smallmatrix} 1/\sqrt{i} & 0 \\ 0 & \sqrt{i}
	    \end{smallmatrix}\right) \overline{g(i/ \bar \lambda)}^{t},
 \end{array}
\end{equation}
 where $g(\lambda) \in \Lambda \mathfrak{sl}(2, \mathbb C)_\sigma$.
 Then, the almost compact real forms of
  $\Lambda \mathfrak{sl}(2, \mathbb C)_{\sigma}$ 
 are the following real Lie subalgebras of $\Lambda \mathfrak{sl}(2, \mathbb  C)_{\sigma}$:
\begin{equation}
 \Lambda \mathfrak{sl}(2, \mathbb C)_{\sigma}^{(\mathfrak{c}, j)} = \left\{ g(\lambda ) \in \Lambda
 \mathfrak{sl}(2, \mathbb C)_{\sigma} \;\left|\; \mathfrak{c}_j \circ g (\lambda) = g(\lambda) \right. \right\}
\;\;\mbox{for}\;\;j\in \{1, 2, 3, 4\}.
\end{equation}
\end{Theorem}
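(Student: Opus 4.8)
The plan is to derive the classification from the abstract correspondence of Theorem \ref{thm:Ross} together with the explicit list of Theorem \ref{thm:ZKob}, transported to $\Lambda\mathfrak{sl}(2,\mathbb{C})_\sigma$ through the isomorphism \eqref{eq:isountwist}. By definition an almost compact real form is the fixed-point set of a semi-linear involution of the second kind, and by Theorem \ref{thm:Ross} such involutions are in bijection, up to ${\rm Aut}$-conjugacy, with the involutions of the first kind via a Cartan semi-involution $\omega' = \theta\rho = \rho\theta$. Thus the problem reduces to listing the conjugacy classes of first-kind involutions and exhibiting, for each, the associated $\rho$. By Theorem \ref{thm:ZKob} the nontrivial first-kind involutions are (a), (a$'$), (c); together with the identity, which is also of the first kind and must be counted here, this gives exactly four classes, matching the four asserted almost compact forms. (There is no second-kind identity, which is why the companion almost-split count is only three.)

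First I would fix the standard compact structure as $\omega' = \mathfrak{c}_3$. Indeed the defining condition $g(\lambda) = -\overline{g(1/\bar\lambda)}^t$ reduces on the unit circle $|\lambda|=1$ to $g(\lambda)^* = -g(\lambda)$, so that the fixed loops take values in $\mathfrak{su}(2)$; hence $\mathfrak{c}_3$ is the standard Cartan semi-involution and is paired with $\theta = {\rm id}$. For each nontrivial first-kind involution $\theta$ I would then produce the corresponding semi-linear involution as $\rho = \theta\,\omega'$, after conjugating so that conditions (a)--(c) of Theorem \ref{thm:Ross} hold. The concrete matching is obtained by computing the linear automorphisms $\theta_j := \mathfrak{c}_j\circ\mathfrak{c}_3$; using the twisting relation $g(-\lambda) = {\rm Ad}(\sigma_3)\,g(\lambda)$ one finds, for example, $\theta_1(g)(\lambda) = {\rm Ad}(\sigma_3)\,g(\lambda)$, the inner involution sending every Chevalley generator $e_j,f_j$ to its negative, which is class (a); equivalently $\mathfrak{c}_1 = {\rm Ad}(\sigma_3)\circ\mathfrak{c}_3$. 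The remaining two forms $\mathfrak{c}_2$ and $\mathfrak{c}_4$ are matched in the same way with the two remaining classes (c) and (a$'$).

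It then remains to check that each $\mathfrak{c}_j$ in \eqref{eq:classcom} is genuinely a semi-linear involution of the second kind, so that its fixed-point set is an almost compact real form. Semi-linearity is immediate from the complex conjugation in the formulas. The second-kind property holds because every substitution $\lambda\mapsto \pm 1/\bar\lambda,\ i/\bar\lambda$ is an inversion reversing the sign of the imaginary root (equivalently interchanging $\lambda=0$ with $\lambda=\infty$), so that the standard positive Borel subalgebra is sent to the negative one. Pairwise non-conjugacy of the four forms is inherited from the distinctness of the four first-kind classes under the bijection, and completeness is exactly the statement that those four classes exhaust the first-kind involutions; hence the four $\mathfrak{c}_j$ represent all almost compact real forms.

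The step I expect to be the main obstacle is verifying the involution property $\mathfrak{c}_j^2 = {\rm id}$ and the matchings $\theta_j\sim$ (a), (a$'$), (c), because both genuinely rely on the twisting relation $g(-\lambda)={\rm Ad}(\sigma_3)\,g(\lambda)$ rather than on the naive matrix algebra. For $\mathfrak{c}_4$, for instance, a direct computation with $D = {\rm diag}(1/\sqrt i,\sqrt i)$ gives $\mathfrak{c}_4^2(g)(\lambda) = {\rm Ad}\!\big({\rm diag}(-i,i)\big)\,g(-\lambda)$, and this equals $g(\lambda)$ only after substituting the twisting relation, the surviving factor being the scalar $-i\,I$ which acts trivially under ${\rm Ad}$. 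The same careful bookkeeping with $\overline{(\cdot)}^t$ and the diagonal conjugations must be carried out for each $j$; once $\mathfrak{c}_j^2={\rm id}$ is secured the proof is complete.
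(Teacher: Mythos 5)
Your proposal is correct and follows essentially the same route as the paper: reduce via Theorem \ref{thm:Ross} to the first-kind linear involutions, count them using Theorem \ref{thm:ZKob} (including the identity, giving four classes), and transfer to $\Lambda\mathfrak{sl}(2,\mathbb C)_\sigma$ through the realization and the isomorphism \eqref{eq:isountwist}. The only difference is that you carry out explicitly the matching and the verification that each $\mathfrak{c}_j$ squares to the identity via the twisting relation, which the paper compresses into the phrase ``by a direct calculation.''
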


\begin{Theorem}\label{thm:almostsplit}
 Let $\mathfrak{s}_j$ for $j \in \{1, 2, 3\}$ be the following
  involutions on $\Lambda \mathfrak{sl} (2, \mathbb C)_\sigma$:
 \begin{equation}\label{eq:classsplit}
 \begin{array}{ll}
 \mathfrak{s}_1 : g(\lambda) \mapsto -\overline{g(-\bar
  \lambda)}^{t},\;\; & 
 \mathfrak{s}_2 : g(\lambda) \mapsto \overline{g \left(- \bar \lambda\right)},\\
  \mathfrak{s}_3: g(\lambda) \mapsto  - {\rm Ad}
	 \left(\begin{smallmatrix} \lambda & 0 \\ 0 & \lambda^{-1}
	       \end{smallmatrix}\right) \overline{g \left( \bar
	       \lambda\right)}^t\;,\;\; &

 \end{array}
\end{equation}
 where $g(\lambda) \in \Lambda \mathfrak{sl}(2, \mathbb C)_\sigma$.
 Then, the almost split real forms of $\Lambda
 \mathfrak{sl}(2, \mathbb C)_{\sigma}$  are the following real Lie
 subalgebras of $\Lambda \mathfrak{sl}(2, \mathbb  C)_{\sigma}$:
\begin{equation}
 \Lambda \mathfrak{sl}(2, \mathbb C)_{\sigma}^{(\mathfrak{s}, j)} = \left\{ g(\lambda ) \in \Lambda
 \mathfrak{sl}(2, \mathbb C)_{\sigma} \;\left|\; \mathfrak{s}_j \circ g (\lambda) = g(\lambda) \right. \right\}
 \;\;\mbox{for}\;\;j\in \{1, 2, 3\}.
\end{equation}
\end{Theorem}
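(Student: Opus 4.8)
The plan is to deduce the classification from the known classification of involutions, using Theorem \ref{thm:Ross} and Theorem \ref{thm:ZKob}, and then to realize the resulting conjugacy classes explicitly as the three semi-involutions in \eqref{eq:classsplit}. By definition an almost split real form of $\Lambda \mathfrak{sl}(2,\mathbb C)_\sigma$ is the one attached to a semi-linear involution of the first kind. Working on the twice central extension, which by Theorem \ref{thm:Kac} is the affine Kac-Moody algebra $\mathfrak g$ of type $A_1^{(1)}$, Theorem \ref{thm:Ross} (in its ``resp.'' reading) gives a bijection between the $\mathrm{Aut}(\mathfrak g)$-conjugacy classes of semi-linear involutions of the first kind and the conjugacy classes of linear involutions of the second kind. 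By Theorem \ref{thm:ZKob} the latter are exhausted, up to conjugacy, by the three involutions \textrm{(b)}, \textrm{(b$'$)} and \textrm{(b$''$)}. Hence there are at most three almost split real forms, and it remains to exhibit three pairwise non-conjugate first-kind semi-involutions and to match them with \textrm{(b)}, \textrm{(b$'$)}, \textrm{(b$''$)}.

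To obtain the explicit formulas I would transport the generator description of \textrm{(b)}, \textrm{(b$'$)}, \textrm{(b$''$)} to the loop algebra through the isomorphism \eqref{eq:isountwist}, $g(\lambda)\mapsto \mathrm{Ad}(\mathrm{diag}(\sqrt{\lambda},\sqrt{\lambda}^{-1}))\,g(\lambda^2)$, tracking how the Chevalley generators sit inside $\Lambda\mathfrak{sl}(2,\mathbb C)$ (the finite generators as constant loops, the affine ones carrying a factor $\lambda^{\pm1}$). Composing each second-kind involution $\theta$ with the standard Cartan semi-involution $\omega'$, in accordance with the relation $\omega'=\theta\rho=\rho\theta$ of Theorem \ref{thm:Ross} (so that $\rho=\omega'\theta$), produces a first-kind semi-involution, and carrying out this computation yields precisely $\mathfrak{s}_1,\mathfrak{s}_2,\mathfrak{s}_3$. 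For each $j$ I would then verify the four required properties directly: that $\mathfrak{s}_j$ preserves the twisting condition $\mathrm{Ad}(\sigma_3)g(-\lambda)=g(\lambda)$, so that it is well defined on $\Lambda\mathfrak{sl}(2,\mathbb C)_\sigma$; that it is semi-linear; that $\mathfrak{s}_j^2=\mathrm{id}$; and that it is of the first kind. Semi-linearity is immediate from the appearance of complex conjugation, and $\mathfrak{s}_j^2=\mathrm{id}$ follows from a short computation using $\overline{\bar\lambda}=\lambda$ together with the facts that $\sigma_3$ and $\mathrm{diag}(\lambda,\lambda^{-1})$ behave predictably under conjugation and transposition.

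The first-kind property is the conceptual point and is what separates $\mathfrak{s}_1,\mathfrak{s}_2,\mathfrak{s}_3$ from the almost compact involutions $\mathfrak{c}_j$ of Theorem \ref{thm:almostcompact}. Writing $g(\lambda)=\sum_n a_n\lambda^n$, each $\mathfrak{s}_j$ acts on the spectral parameter by $\lambda\mapsto\pm\bar\lambda$, so that the $n$-th coefficient is sent to a multiple of $\bar a_n$ at the \emph{same} power $\lambda^n$; the grading by powers of $\lambda$, i.e. the sign of the imaginary root $\delta$, is therefore preserved, possibly after a bounded shift of degree at most two coming from the $\mathrm{Ad}(\mathrm{diag}(\lambda,\lambda^{-1}))$ in $\mathfrak{s}_3$. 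Consequently $\mathfrak{s}_j$ sends the standard Borel subalgebra to a Borel subalgebra of the same sign, and is of the first kind. By contrast each $\mathfrak{c}_j$ acts by an inversion $\lambda\mapsto \pm 1/\bar\lambda$ or $i/\bar\lambda$, which sends $n$ to $-n$ and hence inverts the grading, making those semi-involutions of the second kind.

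Finally I would confirm that $\mathfrak{s}_1,\mathfrak{s}_2,\mathfrak{s}_3$ represent three distinct conjugacy classes: since \textrm{(b)}, \textrm{(b$'$)}, \textrm{(b$''$)} are pairwise non-conjugate in $\mathrm{Aut}(\mathfrak g)$ by Theorem \ref{thm:ZKob}, the bijection of Theorem \ref{thm:Ross} forces the associated semi-involutions to be pairwise non-conjugate, so the fixed-point subalgebras $\Lambda\mathfrak{sl}(2,\mathbb C)_\sigma^{(\mathfrak{s},j)}$ are genuinely different real forms. Together with the upper bound from the first paragraph this shows that $\mathfrak{s}_1,\mathfrak{s}_2,\mathfrak{s}_3$ give all almost split real forms. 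I expect the main obstacle to be the explicit translation step: correctly tracing the Chevalley generators and, in particular, the half-integer powers $\sqrt{\lambda}$ of \eqref{eq:isountwist} through complex conjugation and transposition, so that the abstract involutions \textrm{(b)}, \textrm{(b$'$)}, \textrm{(b$''$)} really do come out as the closed-form expressions $\mathfrak{s}_1,\mathfrak{s}_2,\mathfrak{s}_3$ with the stated spectral transformations $-\bar\lambda$, $-\bar\lambda$ and $\bar\lambda$. Everything else reduces to the routine but careful verifications sketched above.
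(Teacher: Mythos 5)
Your proposal follows essentially the same route as the paper: it invokes the bijection of Theorem \ref{thm:Ross} between first-kind semi-linear involutions and second-kind linear involutions, counts the three classes \textrm{(b)}, \textrm{(b$'$)}, \textrm{(b$''$)} from Theorem \ref{thm:ZKob}, and transports $\rho=\omega'\theta$ to the twisted loop algebra via the realization of Theorem \ref{thm:Kac} and the isomorphism \eqref{eq:isountwist}, leaving the explicit formulas to a direct calculation exactly as the paper does. Your additional remarks on why the $\mathfrak{s}_j$ are of the first kind (the spectral maps $\lambda\mapsto\pm\bar\lambda$ preserve the grading, unlike the inversions in the $\mathfrak{c}_j$) and on pairwise non-conjugacy are correct elaborations of what the paper leaves implicit, not a different argument.
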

\begin{proof}
 Since all linear involutions of the first kind and the
 second kind for the affine Kac-Moody Lie algebra of $A_1^{(1)}$ type
 are classified in Theorem \ref{thm:ZKob}, and using Theorem \ref{thm:Ross}, 
 all semi-linear involutions $\rho$ of the first kind (resp. the second
 kind) can be represented as follows:
 $$
 \rho = \omega^{\prime} \theta\;,
 $$
 where $\omega^{\prime}$ is the Cartan semi-involution and
 $\theta$ is the linear involution of the second kind (resp. the
 first kind).  Noting that the identity map is also the trivial involution of
 the first kind, we have the four classes of semi-linear involutions of 
 the second kind and the three classes of semi-linear involutions of the
 first kind. Since the affine Kac-Moody Lie algebra of $A_1^{(1)}$ type can be 
 realized by the twice central extensions of the loop algebra $\Lambda
 \mathfrak{sl}(2, \mathbb C)$ (see Theorem \ref{thm:Kac}), the real
 forms of the loop algebra $\Lambda \mathfrak{sl}(2, \mathbb C)$ are
 derived. Finally, using the isomorphism
 \eqref{eq:isountwist}, the real forms of $\Lambda
 \mathfrak{sl}(2, \mathbb C)$ are transformed into the real forms of
 $\Lambda \mathfrak{sl} (2, \mathbb C)_{\sigma}$, which are obtained in
 \eqref{eq:classcom} and \eqref{eq:classsplit} by a direct calculation.
 This completes the proof. 
\end{proof}
\begin{Remark}
 The identification \eqref{eq:isountwist} implies, in the untwisted
 setting, the involution $\mathfrak{s}_3$ can be rephrased as follows:
 \begin{equation}\label{eq:invs3}
 \mathfrak{s}_3: g(\lambda) \mapsto -\overline{g \left( \bar
							  \lambda\right)}^t
\;\; \mbox{for}\;\;g(\lambda) \in \Lambda \mathfrak{sl}(2, \mathbb C).
 \end{equation}
 From now on
 we use the involution \eqref{eq:invs3} instead of the original
 involution $\mathfrak{s}_3$ in \eqref{eq:classsplit}.
\end{Remark}
%
\section{Real forms of complex CGC-immersions}\label{sc:Realforms}
 In this section, we show  one of the main theorems in this paper (Theorem
 \ref{thm:compactsplit}), which is the classification of ``integrable
 surfaces'' obtained from all the real forms of the twisted $\mathfrak{sl}(2,
 \mathbb C)$ loop algebra $\Lambda \mathfrak{sl}(2, \mathbb
 C)_{\sigma}$.

\subsection{Integrable surfaces as real forms of complex
  CGC-immersions}\label{subsc:Integrablesurf}
 Let $F(z, w, \lambda) \in \Lambda SL(2, \mathbb C)_{\sigma}$ be the 
 complex extended framing of some complex {\sc CGC}-immersion
 $\varPhi$. And let  $\alpha(z, w, \lambda) = F(z, w, \lambda)^{-1} d F(z, w,
 \lambda)$ be the Maurer-Cartan form of $F(z, w, \lambda)$. 
 From the forms of $U$ and $V$ defined as in Lemma \ref{lem:complex-CMC}, we
 set $\alpha_i \; (i \in \{-1, 0, 1\})$ as follows:
\begin{equation}\label{eq:alpha}
\alpha(z, w, \lambda) =  F^{-1} d F= U dz + V dw = \lambda^{-1} \alpha_{-1} + \alpha_0 + \lambda \alpha_1 \;, 
\end{equation}
 where 
\begin{equation}\label{eq:alpha2}
\left\{
\begin{array}{l}
\alpha_{-1} = \begin{pmatrix}0 & -\frac{1}{2} H e^{u/2}dz, \\ Q
        e^{-u/2}dz & 0\end{pmatrix},\\[0.5cm]
\alpha_{0} =\begin{pmatrix} \frac{1}{4} u_z dz - \frac{1}{4} u_w dw & 0
	    \\ 0 & -\frac{1}{4} u_z dz + \frac{1}{4} u_w dw\end{pmatrix},\\[0.5cm]
\alpha_{1} =\begin{pmatrix} 0 & -R e^{-u/2}dw \\ \frac{1}{2} H e^{u/2}dw
	    & 0 \end{pmatrix}.
\end{array}
\right.
\end{equation}
 We denote the space of $\Lambda \mathfrak{sl}(2, \mathbb C)_{\sigma}$ valued
 $1$-forms by $\Omega (\Lambda \mathfrak{sl}(2, \mathbb C)_\sigma)$.
 Similar to the involutions in Theorem \ref{thm:almostcompact}
 (resp. Theorem \ref{thm:almostsplit}), we define the involutions
 $\tilde {\mathfrak{c}}_j$ (resp. $\tilde{\mathfrak{s}}_j$) for $
 g(\lambda) \in \Omega
 (\Lambda \mathfrak{sl}(2, \mathbb C)_\sigma)$ as follows:
{\small
 \begin{equation}\label{eq:inv-1-forms}
\begin{array}{ll}
\left\{
 \begin{array}{l}
 \tilde{\mathfrak{c}}_1 : g(\lambda) \mapsto -\overline{g(-1/\bar
  \lambda)}^{t},\;\; \\[0.2cm] 
 \tilde{\mathfrak{c}}_2: g(\lambda) \mapsto \overline{g \left(- 1/\bar \lambda\right)},\\[0.2cm]
  \tilde{\mathfrak{c}}_3: g(\lambda) \mapsto  - \overline{g \left(1/\bar \lambda\right)}^t\;,\;\; \\[0.2cm]
 \tilde{\mathfrak{c}}_4: g(\lambda) \mapsto  -{\rm Ad} \left(\begin{smallmatrix} 1/\sqrt{i} & 0 \\ 0 & \sqrt{i}
	    \end{smallmatrix}\right) \overline{g(i/ \bar \lambda)}^{t},
 \end{array}
\right.
\hspace{1cm}
\left\{
 \begin{array}{l}
\tilde{ \mathfrak{s}}_1 : g(\lambda) \mapsto -\overline{g(-\bar
 \lambda)}^{t},\;\; \\[0.2cm]
 \tilde{ \mathfrak{s}}_2 : g(\lambda) \mapsto \overline{g \left(- \bar
							   \lambda\right)}, \\[0.2cm]
\tilde{ \mathfrak{s}}_3: g(\lambda) \mapsto  - \overline{g \left( \bar \lambda\right)}^t.
 \end{array}
\right.
\end{array}
\end{equation}
}

 Then the real forms of $\Omega(\Lambda \mathfrak{sl} (2, \mathbb
 C)_{\sigma}^{(\mathfrak{c}, j )})$ are defined as follows:
 \begin{equation}\label{eq:involutionscj}
 \begin{array}{l}
 \Omega(\Lambda \mathfrak{sl} (2, \mathbb C)_{\sigma}^{(\mathfrak{c}, j )}) = 
                            \left\{ g  \in \Omega(\Lambda
			     \mathfrak{sl}(2, \mathbb C)_{\sigma})
			     \;\left|\; \tilde {\mathfrak{c}}_j \circ g(\lambda ) = 
				g(\lambda) \right. \right\}\;, \\

 \Omega(\Lambda \mathfrak{sl} (2, \mathbb C)_{\sigma}^{(\mathfrak{s}, j )}) = 
                            \left\{ g \in \Omega(\Lambda
			     \mathfrak{sl}(2, \mathbb C)_{\sigma})
			     \;\left|\; \tilde{\mathfrak{s}}_j \circ g(\lambda ) = 
				g(\lambda) \right. \right\}\;.
\end{array}
\end{equation}
 From now on, for simplicity, we use the symbols $\mathfrak{c}_j$ and
 $\mathfrak{s}_j$ instead of $\tilde{\mathfrak{c}}_j$ and $\tilde{\mathfrak{s}}_j$ 
 respectively.
  We now consider the following conditions on $\alpha(z, w, \lambda)$:
\begin{itemize}
\item\label{itm:Almostcompact} {\bf Almost compact cases $(C, j)$:} $\alpha(z,
       w, \lambda)$ is an element in the real form $\Omega(\Lambda
       \mathfrak{sl} (2, \mathbb C)_{\sigma}^{(\mathfrak{c}, j )})$. \\[0.05cm]

\item {\bf Almost split cases $(S, j)$:} $\alpha(z, w, \lambda)$ 
        is an element in the real form $\Omega(\Lambda
       \mathfrak{sl} (2, \mathbb C)_{\sigma}^{(\mathfrak{s}, j )})$.
\end{itemize}
 A straightforward computation shows that the conditions above,
 which are the almost compact cases $(C, j)$ and the
 almost split cases $(S, j)$, are equivalent to the
 following equations for $\alpha_i \;(i \in\{ -1, 0, 1\})$:
\begin{equation}\label{eq:conditionA}
\left\{
\begin{array}{ll}
\alpha_0 = - \overline{ \alpha_0}\;\;\mbox{and} \;\;\alpha_{\pm j} = \overline
 {\alpha_{\pm 1}}^t\; & \mbox{for the $(C, 1)$ or $(S, 1)$ case,} \\
\alpha_0 = \overline{ \alpha_0}\;\;\mbox{and}\;\;\alpha_{\pm j} = - \overline
 {\alpha_{\pm 1}}\; & \mbox{for the $(C, 2)$ or $(S, 2)$ case,}  \\
\alpha_0 = - \overline{ \alpha_0}\;\;\mbox{and}\;\;\alpha_{\pm j} = - \overline
 {\alpha_{\pm 1}}^t\; & \mbox{for the $(C, 3)$ or $(S, 3)$ case,} \\[0.1cm]
\alpha_0 = -\overline{ \alpha_0}\;\;\mbox{and}\;\;\alpha_{-1} = i {\rm
 Ad } \left(\begin{smallmatrix}  1/\sqrt{i} & 0 \\ 0 & \sqrt{i}\end{smallmatrix}\right)\overline
 {\alpha_{1}}^t\; & \mbox{for the $(C, 4)$ case,}

\end{array}
\right.
\end{equation}
 where $j = -1$ (resp. $j = 1$) if $\alpha$ satisfies one of the
 conditions for almost compact cases (resp. almost split cases).
 From the symmetry between $\alpha_1$ and $\alpha_{-1}$ for the almost compact
 cases and the symmetries on each of $\alpha_1$ and $\alpha_{-1}$ for the almost
 split cases, we obtain 
\begin{equation}\label{eq:coordinates}
\left\{
\begin{array}{l}
w = \bar z\;\; \mbox{for the almost compact cases $(C, j)$,}  \\
z = \bar z\;\;\mbox{and}\;\; w = \bar w\;\;\mbox{for the almost split
 cases $(S, j)$.}
\end{array}
\right.
\end{equation}
 Moreover the following choices, which are unique up to
 constants, of $u$, $Q$, $R$ and $H$ for $\alpha(z, w,
 \lambda)$ in \eqref{eq:alpha2} give solutions for 
 \eqref{eq:conditionA}:
\begin{equation}\label{eq:solforMaurer}
\left\{
\begin{array}{cr}
u  \in \mathbb R\;, R = - \bar Q\;, H \in i
 \mathbb R^*\;\; &\mbox{for the $(C, 1)$ case,} \\[0.05cm]
u  \in i \mathbb R\;, R =  Q = - \frac{1}{2}
 \bar H\;, H \in \mathbb C^*\;\; &\mbox{for the $(C, 2)$ case,} \\[0.05cm]
u  \in \mathbb R\;, R = \bar Q\;, H \in\mathbb R^*\;\; &\mbox{for
 the $(C, 3)$ case,}\\[0.05cm]
u \in \mathbb R\;, R = \bar Q\;, H \in i \mathbb R^*\;\; &\mbox{for the $(C, 4)$ case,}
\\[0.1cm]
u \in i \mathbb R\;, Q = R = - \frac{1}{2}\bar H, H \in\mathbb C^* \;\;
&\mbox{for the $(S, 1)$ case,} \\[0.05cm]
u \in \mathbb R\;,  Q, R \in i \mathbb R,  H \in i \mathbb R^*\;\; &\mbox{for the $(S, 2)$ case,}\\[0.05cm]
u \in i \mathbb R\;, Q = R = \frac{1}{2} \bar H, H \in \mathbb C^* \;\;
&\mbox{for the $(S, 3)$ case.}

\end{array}
\right.
\end{equation}
 We denote loop groups whose loop algebras are $\Lambda \mathfrak{sl}(2,
 \mathbb C)_{\sigma}^{(\mathfrak{c}, j)}$ and  $\Lambda \mathfrak{sl}(2, \mathbb
 C)_{\sigma}^{(\mathfrak{s}, j)}$ by 
\begin{equation}\label{eq:RealformGroup}
\Lambda  SL(2, \mathbb C)_{\sigma}^{(\mathfrak{c}, j)}\;\mbox{for}\;\; j \in \{1, 2, 3, 4\} \;\;\mbox{and}\;\;
\Lambda SL(2, \mathbb C)_{\sigma}^{(\mathfrak{s}, j)}\;\mbox{for}\;\; j
 \in \{1, 2, 3\}. 
\end{equation}
 If the Maurer-Cartan form $\alpha  =F^{-1}dF$ is in
 $\Omega(\Lambda\mathfrak{sl}(2, \mathbb C)_{\sigma}^{(\mathfrak{c},
 j)})$  for $j \in \{1, 2, 3, 4\}$ (resp. $\Omega(\Lambda \mathfrak{sl}(2, \mathbb
 C)_{\sigma}^{(\mathfrak{s}, j)})$ for $j \in \{1, 2, 3\}$), the corresponding complex
 extended framing  $F$ is in $\Lambda SL(2, \mathbb
 C)_{\sigma}^{(\mathfrak{c}, j)}$ (resp. $\Lambda SL(2, \mathbb
 C)_{\sigma}^{(\mathfrak{s}, j)}$) under the initial condition $F(z_*, w_*,
 \lambda) ={\rm id}$  with $(z_*, w_*) = (z_*, \bar z_*) \in \mathfrak
 D^2$ (resp. $(z_*, w_*) = (\bar z_*, \bar w_*) \in \mathfrak D^2$). We
 denote the complex extended framing $F$ which is a loop in  $\Lambda
 SL(2, \mathbb C)_{\sigma}^{(\mathfrak{c}, j)}$ (resp. $\Lambda SL(2, \mathbb
 C)_{\sigma}^{(\mathfrak{s}, j)}$) by $F^{(\mathfrak{c}, j)}$
 (resp. $F^{(\mathfrak{s}, j)}$).

 We now set the following formulas $\varPhi^{(\mathfrak{c},j)}$ for
 $j \in \{1, 2, 3, 4\}$ (resp. $\varPhi^{(\mathfrak{s},j)}$ for $j
 \in \{1, 2, 3\}$) analogous to the second formula in \eqref{eq:Sym-Bobenko}:
\begin{align}
  \varPhi^{(\mathfrak{c},j)} &= \displaystyle
   \left.-\frac{1}{2|H|}\left( i \lambda \partial_{\lambda}
			 F^{(\mathfrak{c}, j)} (z, \bar z, \lambda)
				    \cdot F^{(\mathfrak{c}, j)} (z, \bar
				    z, \lambda)^{-1}
			\right)\right|_{\lambda \in  S^1} \mbox{for $j
   \in \{1, 2, 3\},$} \label{eq:Sym-Bobenko-2}\\
 \varPhi^{(\mathfrak{c},4)} &=  \frac{1}{2}\left.\left(F^{(\mathfrak{c}, 4)} (z, \bar z,
  \lambda) \left( \begin{smallmatrix} e^{q/2} & 0 \\ 0 &
		  e^{-q/2}\end{smallmatrix}\right) (F^{(\mathfrak{c},
 4)} (z, \bar z, \lambda))^*\right)\right|_{\lambda \in  S^r}\;, \label{eq:Sym-Bobenko-3}\\
  \varPhi^{(\mathfrak{s},j)} &= \displaystyle
 \left.-\frac{1}{2|H|}\left( \lambda \partial_{\lambda} F
 ^{(\mathfrak{s}, j)}(x, y, \lambda)  \cdot F^{(\mathfrak{s}, j)}(x, y,
 \lambda)^{-1} \right)\right|_{\lambda \in  \mathbb R^*} \mbox{for $j
   \in \{1, 2, 3\}$}, \label{eq:Sym-Bobenko-4}
\end{align}
 where $\lambda = \exp (i t) \in S^1$ or $\lambda = \exp (
 q/2+i t) \in S^r$  for \eqref{eq:Sym-Bobenko-2} or
 \eqref{eq:Sym-Bobenko-3} (resp. $\lambda = \pm \exp (t) \in \mathbb R^*$
 for \eqref{eq:Sym-Bobenko-4}) with $t, q \in \mathbb R$, and where $*$
 denotes $X^*= \bar X^t$ for $X \in
 M_{2 \times 2}(\mathbb C)$.
 We note that $w = \bar z$ (resp. $z = \bar z = x \in \mathbb R$ and $w =
 \bar w = y \in \mathbb R$) for $\varPhi^{(\mathfrak{c},j)}$
 (resp. $\varPhi^{(\mathfrak{s},j)}$), from \eqref{eq:coordinates}.
 Then, for each $\lambda \in S^1$ or $\lambda \in S^r$
 (resp. $\lambda \in \mathbb R^*$), the formula
 $\varPhi^{(\mathfrak{c},j)}$ (resp.
 $\varPhi^{(\mathfrak{s},j)}$) defines a map into
 one of the following spaces:
\begin{equation*}
\left\{
\begin{array}{cl}
\mathfrak{su}(1, 1) \cong \mathbb R^{1,2} &\mbox{for the $(C, 1)$ and $(S, 1)$ cases,}\\
 \mathfrak{sl}_*(2, \mathbb R)\cong \mathbb R^{1,2} &\mbox{for the $(C, 2)$ and $(S, 2)$
 cases,}\\
\mathfrak{su}(2) \cong \mathbb R^3 &\mbox{for the $(C, 3)$ and $(S, 3)$
 cases,}\\
SL(2, \mathbb C)/SU(2) \cong H^3 &\mbox{for the $(C, 4)$ case,}
\end{array}
\right.
\end{equation*}
 where $\mathfrak{sl}_*(2, \mathbb R) = \{ g \in \mathfrak{sl} (2,
 \mathbb C) \;| \;g = \left(\begin{smallmatrix}a & b\\c & -a
			    \end{smallmatrix}\right), a \in \mathbb
 R,\;b, c \in i \mathbb R\}$, which is isomorphic to $\mathfrak{sl} (2,
 \mathbb R)$. Here $\mathbb R^{1,2}$ and $\mathbb
 R^3$ can be identified with $\mathfrak{su}(1, 1)$, $ \mathfrak{sl}_*(2,
 \mathbb R)$ and $\mathfrak{su}(2)$ analogous to the identification
 \eqref{eq:ident1}.  Minkowski $4$-space $\mathbb R^{3,1}$ can be identified with ${\rm
 Herm}(2):=\left\{ X \in M_{2 \times 2}(\mathbb C) \;|\; \bar X^t = X
 \right\}$ via the map  $$(x_1, x_2, x_3, x_0) \mapsto
 \frac{1}{2}\begin{pmatrix}x_0+x_3 &

	    x_1+i x_2 \\ x_1-i x_2 & x_0-x_3  \end{pmatrix},
 $$
 then $H^3 \subset \mathbb R^{3,1}$ can be identified with ${\rm Herm}(2)$
 with the determinant $1/4$. 
 Then the inner product for  $\mathfrak{su}(1, 1) \cong \mathbb R^{1,2}$
 (resp. $\mathfrak{sl}_*(2, \mathbb R) \cong \mathbb R^{1,2}$ or
 $\mathfrak{su}(2) \cong \mathbb R^{3}$) can be defined by $\langle a,
 b\rangle = -2 {\rm Tr} \;(a b)$ for $a, b \in \mathfrak{su}(1, 1)$
 (resp. $a, b \in \mathfrak{sl}_*(2, \mathbb R)$ or $a, b \in
 \mathfrak{su}(2)$). The inner product for ${\rm Herm}(2) \cong \mathbb R^{3,1}$ can
 be defined by $\langle a, b\rangle = -2 {\rm Tr}\; (a \sigma_2 b^t \sigma_2)$
 for $a, b \in {\rm Herm}(2)$, where $\sigma_2$ is defined in \eqref{eq:ident2}.
 From now on, we always assume that the
 spectral parameter $\lambda$ is in $S^1$ or $S^r$ for the almost
 compact cases and $\lambda$  is in $\mathbb R^*$ for the almost split
 cases, respectively.
\begin{Remark}
 For the $(C, 4)$ case, the complex Gau{\ss} equation in \eqref{eq:GC} can be reduced to
 the elliptic cosh-Gordon type equation by the choices of functions in
 \eqref{eq:solforMaurer}. It is known that the Gau{\ss} equation for
 {\sc CMC} surfaces with mean curvature $|H| <1$ in
 $H^3$ is the elliptic cosh-Gordon type equation, see
 \cite{BB:MiniH3}. Therefore it is
 natural to use the Sym formula defined in \eqref{eq:Sym-Bobenko-3} for
 the $(C, 4)$ case. 
\end{Remark}
 We denote the metrics for $\varPhi^{(\mathfrak{c}, j)}$ by
 $g^{(\mathfrak{c}, j)}$  (resp. $\varPhi^{(\mathfrak{s}, j)}$ by $g^{(\mathfrak{s}, j)}$), and
 also denote the coefficient matrices for the metrics $g^{(\mathfrak{c}, j)}$
  by $\tilde I^{(\mathfrak{c}, j)}$  for
 $j \in \{1, 2, 3, 4\}$ (resp. $g^{(\mathfrak{s}, j)}$ by $\tilde I^{(\mathfrak{s}, j)}$ for $j \in \{1, 2, 3\}$).
 
 Since $\lambda \in S^1$ or $S^r$ for the almost compact cases and $\lambda
 \in \mathbb R^*$ for the almost split cases, $\tilde
 I^{(\mathfrak{c}, j)}$ and $\tilde I^{(\mathfrak{s}, j)}$ are given as follows:
 \begin{equation}\label{eq:firstforreal}
\left\{
\begin{array}{ll}
 \tilde I^{(\mathfrak{c}, j)} = 
   {\displaystyle \frac{1}{2 |H|^2}} \begin{pmatrix} 
      \mathfrak{a} + \mathfrak{b} + \mathfrak{c} & i( \mathfrak{b} -\mathfrak{c}) \\
     i( \mathfrak{b} -\mathfrak{c}) & \mathfrak{a} -\mathfrak{b}-\mathfrak{c}
    \end{pmatrix}\;\;\mbox{for $j \in \{1, 2, 3\}$}, 
\\[0.5cm]
 \tilde I^{(\mathfrak{c}, 4)} = 
   \displaystyle - H^2  e^{u} \cosh^2 (q) \begin{pmatrix} 1  & 0 \\ 0 & 1 \end{pmatrix}, 
\\[0.5cm]
 \tilde I^{(\mathfrak{s}, j)} = 
   {\displaystyle \frac{1}{2 |H|^2}}
    \begin{pmatrix} 
     -\mathfrak{b}&-\frac{1}{2} \mathfrak{a}  \\
     -\frac{1}{2} \mathfrak{a} &  -\mathfrak{c}
    \end{pmatrix}\;\;\mbox{for $j \in \{1, 2, 3\}$} ,

\end{array}
\right. 
\end{equation}
 where $\mathfrak{a} = H^2 e^u/2 + 2 Q R e^{-u}$,
 $\mathfrak{b} = - \lambda^{-2} H Q$, $\mathfrak{c} = - \lambda^2 H
 R$ and $u$, $Q$, $R$ and $H$ are solutions defined in
 \eqref{eq:solforMaurer}. Therefore it is easy to verify that the
 determinants of $ \tilde I^{(\mathfrak{c}, j)}$ and $ \tilde
 I^{(\mathfrak{s}, j)}$ are as follows:
\begin{equation}\label{eq:firstforreal2}
 \left\{
 \begin{array}{l}
 \det \widetilde{I}^{(\mathfrak{c}, j)} =\displaystyle \frac{1}{|H|^4} \left(\frac{1}{4}H^2
  e^u - Q R e^{-u}\right)^2 \;\;\mbox{for $j \in \{1, 2, 3 \}$, }
 \\[0.3cm]   
 \det \widetilde{I}^{(\mathfrak{c}, 4)} =\displaystyle H^4 e^{2 u} \cosh^4 (q),\\[0.1cm] 
 \det \widetilde{I}^{(\mathfrak{s}, j)} =\displaystyle -\frac{1}{4|H|^4} \left(\frac{1}{4}H^2
  e^u - Q R e^{-u}\right)^2 \;\;\mbox{for $j \in \{1, 2, 3 \}$}.
\end{array}
\right.
\end{equation}
 From \eqref{eq:firstforreal2} one can verify that
 $\varPhi^{(\mathfrak{c},j)}$ and $\varPhi^{(\mathfrak{s},j)}$
 actually define immersions if and only if $e^{u} \neq 4 H^{-2} Q R$ for
 $j \in \{1, 2, 3\}$, and for the $(C, 4)$ case,
 $\varPhi^{(\mathfrak{c},4)}$ always defines an immersion.
 From \eqref{eq:solforMaurer}, the immersions
 $\varPhi^{(\mathfrak{c}, j)}$  for $j \in \{1, 3, 4\}$ and
 $\varPhi^{(\mathfrak{s}, j)}$ for $j \in \{1, 3\}$ are spacelike, and
 the immersions $\varPhi^{(\mathfrak{c}, 2)}$ and
 $\varPhi^{(\mathfrak{s}, 2)}$ are timelike.
 \begin{Remark}
 Since we consider Minkowski space $\mathbb R^{1,2}$ as the three
  dimensional vector space $\{ (x_1, x_2, x_3)
 \;|\;  x_j \in \mathbb R\}$ endowed with the metric $g = dx_1^2 - dx_2^2
 -dx_3^2$, the spacelike, timelike and lightlike vectors are $\langle a, a
  \rangle < 0$, $\langle b, b \rangle > 0$ and $\langle c, c \rangle = 0$ for $a, b, c \in \mathbb
 R^{1,2}$, respectively.
\end{Remark}

 Let $N^{(\mathfrak{c}, j)}$ and $N^{(\mathfrak{s},  j)}$ be the following maps:
 \begin{equation}\label{eq:Gaussmap}
\left\{
 \begin{array}{l}
 N^{(\mathfrak{c}, j)} := \frac{\ell}{2}{\rm Ad} (F^{(\mathfrak{c}, j)})\sigma_3,
  \;\mbox{for}\;\; j \in  \{1, 2, 3\}\;,\\[0.25cm]
 N^{(\mathfrak{c}, 4)} := \frac{1}{2}F^{(\mathfrak{c},
 4)}\left(\begin{smallmatrix} e^{q/2} & 0 \\ 0 & -e^{-q/2}
	  \end{smallmatrix} \right) 
 (F^{(\mathfrak{c},
 4)})^*
  \;\;,\\[0.25cm]
 N^{(\mathfrak{s},  j)} := \frac{\ell}{2}{\rm Ad} (F^{(\mathfrak{s}, j)})
 \sigma_3\;\;\mbox{for}\;\; j \in  \{1, 2, 3\}\;,
\end{array}
\right.
 \end{equation}
 where $\ell$ is $i$ (resp. 1) for $j \in \{1, 3\}$ (resp. $j = 2$).
 It is clear that $N^{(\mathfrak{c}, j)}$ and $
 N^{(\mathfrak{s}, j)} $ are the Gau{\ss} maps of the immersions
 $\varPhi^{(\mathfrak{c}, j)}$ and $\varPhi^{(\mathfrak{s}, j)}$
 respectively. 
 The second fundamental forms
 $I\!I^{(\mathfrak{c}, j)}$ and $I\!I^{(\mathfrak{s}, j)}$
 for the immersions $\varPhi^{(\mathfrak{c}, j)}$ and
 $\varPhi^{(\mathfrak{s}, j)}$ are defined by (see \cite[page
 107]{Oneill:Semi-Riemannian})
\begin{equation*}
\left\{
\begin{array}{l}
  I\!I^{(\mathfrak{c}, j)} = - \langle d
   \varPhi^{(\mathfrak{c}, j)}, d N^{(\mathfrak{c}, j)}\rangle
   \;\;\mbox{for}\;j\in \{1, 2, 3, 4\},\\[0.2cm]
 I\!I^{(\mathfrak{s}, j)} = - \langle d
\varPhi^{(\mathfrak{s}, j)}, d N^{(\mathfrak{s},
j)}\rangle\;\;\mbox{for}\; j \in \{1, 2, 3\}.
\end{array}
\right.
\end{equation*}
  We denote the coefficient matrices of $I\!I^{(\mathfrak{c}, j)}$
  by $\widetilde{I\!I}{}^{(\mathfrak{c}, j)}$
  (resp. $I\!I^{(\mathfrak{s}, j)}$ by
  $\widetilde{I\!I}{}^{(\mathfrak{s}, j)}$). A straightforward
  computation (see also the proof of Theorem \ref{thm:Sym-Bob}) shows
  that the $\widetilde {I\!I}{}^{(\mathfrak{c}, j)}$ for $j \in \{1, 2,
  3, 4\}$ and $\widetilde {I\!I}{}^{(\mathfrak{s}, j)}$ for $j \in \{1,
  2, 3\}$ are as follows:
\begin{equation}\label{eq:secondforint}
\left\{
\begin{array}{l}
 \widetilde{I\!I}{}^{(\mathfrak{c}, j)} =\displaystyle -\frac{2i \ell}{|H|}\left(\frac{1}{4} H^2 e^u - Q  R e^{-u}\right) 
  \begin{pmatrix} 1 & 0 \\ 0& 1 \end{pmatrix}
  \; \mbox{for $j \in \{1, 2, 3\}$, } \\[0.5cm]  

 \widetilde{I\!I}{}^{(\mathfrak{c}, 4)} =   \begin{pmatrix} \mathfrak{d} +2 {\rm Re}\; \mathfrak{e} &
                                             -2 {\rm Im}\; \mathfrak{e} \\ -2 {\rm Im}\; \mathfrak{e} 
					     & \mathfrak{d}
					     -2 {\rm Re}\; \mathfrak{e}
					    \end{pmatrix}, \\[0.5cm]  
 \widetilde{I\!I}{}^{(\mathfrak{s}, j)} =   \displaystyle 
 - \frac{\ell}{|H|}\left(\frac{1}{4} H^2 e^u -Q  R e^{-u}\right) 
\begin{pmatrix} 0 & 1 \\ 1& 0 \end{pmatrix} 
 \; \mbox{for $j \in \{1, 2, 3\}$, } 
\end{array}
\right.
\end{equation}
 where $\mathfrak{d} = H^2 e^u \cosh (q) \sinh (q)$ and $\mathfrak{e} =
 H Q \cosh (q)e^{- 2i t}$.

 We recall that the Gau{\ss} curvatures $K^{(\mathfrak{c}, j)}$ and
 $K^{(\mathfrak{s}, j)}$ (resp. the mean curvature $H^{(\mathfrak{c},
 4)}$) of the immersions $\varPhi^{(\mathfrak{c},
 j)}$ and $\varPhi^{(\mathfrak{s}, j)}$ for $j \in \{ 1, 2, 3\}$
 (resp. $\varPhi^{(\mathfrak{c}, 4)}$) are defined as follows (see
 also \cite[page 157, (93)]{Weinstein:Lorentz}):
\begin{equation*}
\left\{
\begin{array}{l}
 K^{(\mathfrak{c}, j)}:= \pm \det  \left(\tilde{I}^{(\mathfrak{c},
				    j)-1}\widetilde{I\!I}{}^{(\mathfrak{c}, j)}\right)
\;\mbox{and}\; K^{(\mathfrak{s}, j)}:= \pm \det  \left(\tilde  I^{(\mathfrak{s},
  j)-1}\widetilde{I\!I}{}^{(\mathfrak{s}, j)}\right) \; \mbox{for} \; j \in \{1, 2, 3\}, \\[0.3cm]

H^{(\mathfrak{c}, 4)} :=  \displaystyle \frac{1}{2} {\rm Tr} \left(\tilde
   I^{(\mathfrak{c}, 4) -1}\widetilde{I\!I}{}^{(\mathfrak{c}, 4)}\right),
\end{array}
\right.
\end{equation*}
 where the plus sign (resp. the minus sign) has been chosen if the
 surface is in $\mathbb R^3$ or timelike in $\mathbb R^{1,2}$,
 i.e., $\varPhi^{(\mathfrak{c}, j)}$ and $\varPhi^{(\mathfrak{s}, j)}$
 for $j \in \{2, 3\}$ (resp. spacelike in
 $\mathbb R^{1, 2}$, i.e., $\varPhi^{(\mathfrak{c}, 1)}$ and
 $\varPhi^{(\mathfrak{s}, 1)}$), see \cite{Weinstein:Lorentz}.
 Combining \eqref{eq:firstforreal2} with \eqref{eq:secondforint}, we
 finally obtain 
\begin{equation*}
\left\{
\begin{array}{l}
 K^{(\mathfrak{s}, 1)} = K^{(\mathfrak{s}, 2)} = K^{(\mathfrak{c}, 3)} =  4 |H|^2 >0,\\[0.1cm]
 K^{(\mathfrak{c}, 1)}= K^{(\mathfrak{c}, 2)}=K^{(\mathfrak{s}, 3)} = -
 4|H|^2 < 0 , \\[0.1cm]
 H^{(\mathfrak{c}, 4)} = \tanh (-q).
\end{array}
\right.
\end{equation*} 
 The above discussion is summarized in the following theorem:
\begin{Theorem}\label{thm:compactsplit}
  Let $F(z, w, \lambda)$ be the complex extended framing of some complex
  {\sc CGC}-immersion $\varPhi$. Then the following statements hold:
\begin{enumerate}
\item[$(C, 1)$] If $F^{-1} d F$ is in $\Omega(\Lambda \mathfrak{sl}(2,
	      \mathbb C)_{\sigma}^{(\mathfrak{c}, 1)})$, then for each $\lambda \in S^1$ the Sym formula in
	      \eqref{eq:Sym-Bobenko-2} defines a {\rm spacelike
	      constant negative {Gau\ss ian} curvature surface} in
	      $\mathbb R^{1,2}$.

\item[$(C, 2)$] If $F^{-1} d F$ is in $\Omega(\Lambda \mathfrak{sl}(2,
	      \mathbb C)_{\sigma}^{(\mathfrak{c}, 2)})$, then for each $\lambda \in S^1$ the Sym formula in
	      \eqref{eq:Sym-Bobenko-2} defines
	      a {\rm timelike constant negative Gau{\ss}ian curvature
	      surface} in $\mathbb R^{1,2}$.

\item[$(C, 3)$] If $F^{-1} d F$ is in $\Omega(\Lambda \mathfrak{sl}(2,
	      \mathbb C)_{\sigma}^{(\mathfrak{c}, 3)})$, then for each $\lambda \in
	      S^1$ the Sym formula in
	      \eqref{eq:Sym-Bobenko-2} defines a
	      {\rm constant positive {Gau\ss ian} curvature surface} in
	      $\mathbb R^3$.

\item[$(C, 4)$] If $F^{-1} d F$ is in $\Omega(\Lambda \mathfrak{sl}(2,
	      \mathbb C)_{\sigma}^{(\mathfrak{c}, 4)})$, then for each
	      $\lambda \in S^r$ the Sym formula in
	      \eqref{eq:Sym-Bobenko-3} defines
	      a {\rm constant mean curvature surface} with mean
		curvature $|H^{(\mathfrak{c}, 4)}| < 1$ in $H^3$.

\item[$(S, 1)$] If $F^{-1} d F$ is in $\Omega(\Lambda \mathfrak{sl}(2,
	      \mathbb C)_{\sigma}^{(\mathfrak{s}, 1)})$, then for each $\lambda \in \mathbb R^*$ the Sym formula in
	      \eqref{eq:Sym-Bobenko-4} defines a {\rm spacelike
	      constant positive Gau{\ss}ian curvature surface} in
	      $\mathbb R^{1,2}$.

\item[$(S, 2)$] If $F^{-1} d F$ is in $\Omega(\Lambda \mathfrak{sl}(2,
	      \mathbb C)_{\sigma}^{(\mathfrak{s}, 2)})$, then for each $\lambda \in \mathbb R^*$ the Sym formula in
	      \eqref{eq:Sym-Bobenko-4} defines a
	      {\rm timelike constant positive Gau{\ss}ian curvature surface} in
	      $\mathbb R^{1,2}$.

\item[$(S, 3)$] If $F^{-1} d F$ is in $\Omega(\Lambda \mathfrak{sl}(2,
	      \mathbb C)_{\sigma}^{(\mathfrak{s}, 3)})$, then for each $\lambda \in \mathbb R^*$ the Sym formula in
	      \eqref{eq:Sym-Bobenko-4} defines a {\rm constant negative
	      Gau{\ss}ian curvature surface} in $\mathbb R^3$.
\end{enumerate}
\end{Theorem}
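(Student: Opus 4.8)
The plan is to exploit the fact that the seven real forms of $\Lambda\mathfrak{sl}(2,\mathbb{C})_\sigma$ have already been described by explicit semi-linear involutions (Theorems \ref{thm:almostcompact} and \ref{thm:almostsplit}), and to propagate the reality condition from the Maurer--Cartan form $\alpha = F^{-1}dF$ to the framing $F$ itself, then to the Sym formula, and finally to the induced geometry. Concretely, I would argue in three stages: (i) membership of $\alpha$ in a given real form is equivalent to explicit reality constraints on the Fourier coefficients $\alpha_{-1},\alpha_0,\alpha_1$, hence on the data $(u,Q,R,H)$; (ii) these constraints place $F$ in the corresponding real loop group, so that the relevant Sym formula in \eqref{eq:Sym-Bobenko-2}--\eqref{eq:Sym-Bobenko-4} takes values in the correct target space; (iii) computing the first and second fundamental forms then reads off the causal type and the constancy of the curvature.

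For stage (i), I would first match each involution $\mathfrak{c}_j$, $\mathfrak{s}_j$ acting on $\alpha = \lambda^{-1}\alpha_{-1}+\alpha_0+\lambda\alpha_1$ against the block structure of $\alpha_{\pm1},\alpha_0$ in \eqref{eq:alpha2}; comparing coefficients of the powers of $\lambda$ produces the symmetries \eqref{eq:conditionA}. Since the involution swaps $\alpha_{-1}\leftrightarrow\alpha_1$ (compact cases) or acts within each of them (split cases), and these carry the $dz$- and $dw$-dependence separately, I would deduce the coordinate reality \eqref{eq:coordinates}, and then solve \eqref{eq:conditionA} for the reality type of $(u,Q,R,H)$ recorded in \eqref{eq:solforMaurer}. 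For stage (ii), integrating the ODE $dF = F\alpha$ from the stated base point and using that $\alpha$ is fixed by the involution shows that $F$ lies in the real loop group \eqref{eq:RealformGroup}; restricting $\lambda$ to $S^1$, $S^r$, or $\mathbb{R}^*$ as appropriate, the image of the Sym formula is then fixed by the corresponding semi-linear involution, hence lands in $\mathfrak{su}(1,1)$, $\mathfrak{sl}_*(2,\mathbb{R})$, $\mathfrak{su}(2)$, or $H^3$.

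For stage (iii), I would repeat the computation in the proof of Theorem \ref{thm:Sym-Bob} verbatim but now under the reality conditions \eqref{eq:solforMaurer}: differentiating the Sym formula, using $\mathrm{Ad}(F)$-invariance of the trace together with the explicit $U,V$, yields the fundamental forms \eqref{eq:firstforreal} and \eqref{eq:secondforint}. The signature of $\tilde I^{(\cdot)}$, read off from its determinant \eqref{eq:firstforreal2}, separates spacelike from timelike immersions, and forming $\det(\tilde I^{-1}\widetilde{I\!I})$ (respectively $\tfrac12{\rm Tr}(\tilde I^{-1}\widetilde{I\!I})$ for $(C,4)$) collapses to $\pm 4|H|^2$ (respectively $\tanh(-q)$), independent of $(z,w)$, which gives the asserted constant curvatures and the bound $|H^{(\mathfrak{c},4)}|<1$.

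The hard part will be twofold. First, the case $(C,4)$ is genuinely different: its target $H^3 = SL(2,\mathbb{C})/SU(2)$ is a symmetric space rather than a Lie algebra, so I would replace \eqref{eq:Sym-Bobenko-2} by the conjugation-type formula \eqref{eq:Sym-Bobenko-3} and use the Gau{\ss} map \eqref{eq:Gaussmap} adapted to $\mathrm{Herm}(2)$, tracking the $\cosh q$ and $\sinh q$ factors that make the mean curvature, rather than the Gau{\ss} curvature, the natural constant. Second, verifying that the Sym formula actually lands in the claimed real form requires checking that the factor $\lambda\partial_\lambda F\cdot F^{-1}$ is fixed by the semi-linear involution on the precise $\lambda$-locus; this interaction between $\partial_\lambda$ and the $\lambda$-inversions and conjugations in \eqref{eq:classcom}--\eqref{eq:classsplit} is the only place where restricting to $S^1$, $S^r$, or $\mathbb{R}^*$ is essential, and I expect it to be the step most prone to sign and reality errors.
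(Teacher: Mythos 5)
Your proposal follows essentially the same route as the paper: translate the real-form condition on $\alpha=F^{-1}dF$ into the coefficient symmetries \eqref{eq:conditionA}, hence the coordinate conditions \eqref{eq:coordinates} and the reality types \eqref{eq:solforMaurer} of $(u,Q,R,H)$; integrate to place $F$ in the real loop group; and then redo the Sym-formula computation of Theorem \ref{thm:Sym-Bob} under these constraints to obtain \eqref{eq:firstforreal}--\eqref{eq:secondforint} and the constant curvatures $\pm 4|H|^2$, with $(C,4)$ handled separately via \eqref{eq:Sym-Bobenko-3} and $H^{(\mathfrak{c},4)}=\tanh(-q)$. This matches the paper's argument, including the points you flag as delicate, so no further comment is needed.
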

 \begin{Definition}
  Let $F^{(\mathfrak c, j)}(z, \bar z, \lambda)$ for $j \in \{1, 2, 3, 4\}$
  (resp. $F^{(\mathfrak s, j)}(x, y, \lambda)$ for $j \in \{1, 2, 3\}$) be
  the complex extended framings, which are elements in ${\Lambda SL(2,
  \mathbb  C)_\sigma}^{(\mathfrak c, j)}$ (resp. ${\Lambda SL(2, \mathbb
  C)_\sigma}^{(\mathfrak s, j)}$). Then $F^{(\mathfrak c, j)}(z, w,
  \lambda)$ (resp. $F^{(\mathfrak s, j)}(x,  y, \lambda)$) is called the
  {\rm extended framing for the immersion}
  $\varPhi^{(\mathfrak{c}, j)}$ (resp. $\varPhi^{(\mathfrak{s},  j)}$).
 \end{Definition}
%
%
 It is known that for three classes of surfaces in the above seven
 classes, there exist parallel constant mean curvature
 surfaces in $\mathbb R^3$ or $\mathbb R^{1,2}$, see also
\cite{Inoguchi:timelike} and \cite{Inoguchi:Minkowski}.
\begin{Corollary}\label{coro:compactsplit} We retain the assumptions in Theorem
 \ref{thm:compactsplit}. Then we have the following:
\begin{enumerate}
\item[$(C, 1M)$] For the $(C, 1)$ case in Theorem \ref{thm:compactsplit},
	       there exists a parallel spacelike constant mean
	       curvature surface with mean curvature $H^{(\mathfrak{c},
		 1)} = |H|>0$ in $\mathbb R^{1,2}$.

\item[$(C, 3M)$] For the $(C, 3)$ case in Theorem \ref{thm:compactsplit},
	       there exists a parallel constant mean curvature surface
		 with mean curvature $H^{(\mathfrak{c}, 3)} =
		 |H|>0$ in $\mathbb R^3$.

\item[$(S, 2M)$] For the $(S, 2)$ case in Theorem \ref{thm:compactsplit},
	       there exists a parallel timelike constant mean curvature
	       surface with mean curvature $H^{(\mathfrak{s}, 2)} = |H|>0$ in $\mathbb R^{1,2}$.
\end{enumerate}
\end{Corollary}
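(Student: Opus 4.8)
The plan is to obtain each parallel surface as the real form of the relation $\varPsi = \varPhi - \tfrac{1}{2H}N$ between the complex {\sc CMC}- and {\sc CGC}-immersions recorded in the proof of Theorem~\ref{thm:Sym-Bob}, and then to verify directly that the resulting surface has constant mean curvature $|H|$. Concretely, for the $(C,1)$, $(C,3)$ and $(S,2)$ cases I set
\begin{equation*}
 \varPsi^{(\cdot)} := \varPhi^{(\cdot)} + c\, N^{(\cdot)},
\end{equation*}
for a real constant $c$ (the real-form analogue of the complex factor $-\tfrac{1}{2H}$), where $\varPhi^{(\cdot)}$ is the {\sc CGC}-immersion of Theorem~\ref{thm:compactsplit} and $N^{(\cdot)}$ is its Gau{\ss} map from \eqref{eq:Gaussmap}. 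Since $\langle N^{(\cdot)},N^{(\cdot)}\rangle$ is constant, $N^{(\cdot)}$ stays orthogonal to $d\varPsi^{(\cdot)}$, so $\varPsi^{(\cdot)}$ and $\varPhi^{(\cdot)}$ share the same Gau{\ss} map $N^{(\cdot)}$.

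First I compute the fundamental forms of the parallel surface. From $d\varPsi^{(\cdot)} = d\varPhi^{(\cdot)} + c\, dN^{(\cdot)}$, the definition $I\!I = -\langle d\varPhi, dN\rangle$ of \eqref{eq:second-fund}, and the third fundamental form $I\!I\!I := \langle dN, dN\rangle$, I get
\begin{equation*}
 I_{\varPsi} = I_{\varPhi} - 2c\, I\!I_{\varPhi} + c^{2}\, I\!I\!I_{\varPhi},
 \qquad
 I\!I_{\varPsi} = I\!I_{\varPhi} - c\, I\!I\!I_{\varPhi}.
\end{equation*}
The third fundamental form is removed by the Cayley--Hamilton identity $I\!I\!I_{\varPhi} = 2H_{\varPhi}\, I\!I_{\varPhi} - \delta\, I_{\varPhi}$, $\delta := \det S$, for the shape operator $S$ of $\varPhi^{(\cdot)}$. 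A short $2\times 2$ computation then gives $S_{\varPsi} = (1 - c\tau + c^{2}\delta)^{-1}\bigl(S - c\delta\,\id\bigr)$ with $\tau := {\rm Tr}\,S = 2H_{\varPhi}$, whence
\begin{equation*}
 H_{\varPsi} = \tfrac{1}{2}{\rm Tr}\, S_{\varPsi}
 = \frac{\tau - 2c\delta}{2\,(1 - c\tau + c^{2}\delta)}.
\end{equation*}
Crucially this uses only the constancy of $\delta = \det S$ and not the diagonalizability of $S$, so it applies uniformly to the spacelike, the Euclidean and the timelike settings.

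Next I read off $\delta = \det S = \det(\tilde I^{-1}\widetilde{I\!I})$ from Theorem~\ref{thm:compactsplit} together with the sign conventions in the definitions of the $K^{(\cdot)}$. In all three cases $\delta = 4|H|^{2} > 0$: for $(C,3)$ and $(S,2)$ because $K^{(\cdot)} = +\det(\tilde I^{-1}\widetilde{I\!I}) = 4|H|^{2}$, and for $(C,1)$ because the spacelike convention $K^{(\mathfrak c,1)} = -\det(\tilde I^{-1}\widetilde{I\!I}) = -4|H|^{2}$ likewise forces $\det S = 4|H|^{2}$. Substituting $c = -1/(2|H|)$, so that $c^{2}\delta = 1$, the displayed quotient collapses to $H_{\varPsi} = -1/(2c) = |H|$, independently of the (in general nonconstant) mean curvature $H_{\varPhi}$ of $\varPhi^{(\cdot)}$. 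Moreover $\det I_{\varPsi} = (1 - c\tau + c^{2}\delta)^{2}\det I_{\varPhi} = (2 - c\tau)^{2}\det I_{\varPhi}$, which is nonzero away from focal points, so $\varPsi^{(\cdot)}$ is a genuine immersion there; its spacelike or timelike causal type is inherited from $\varPhi^{(\cdot)}$ through \eqref{eq:solforMaurer}. Equivalently, one may recompute $I_{\varPsi}$ and $I\!I_{\varPsi}$ directly from the extended framing exactly as in the proof of Theorem~\ref{thm:Sym-Bob}, using the explicit matrices \eqref{eq:firstforreal} and \eqref{eq:secondforint}.

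The main obstacle is the Lorentzian sign bookkeeping, which is precisely what selects these three classes. The condition for a parallel {\sc CMC}-surface to exist over a real parallel distance is $\det S > 0$; the interplay between the sign of $K^{(\cdot)}$ in Theorem~\ref{thm:compactsplit} and the $\pm$ in the definition of $K$ for spacelike versus timelike (or Euclidean) surfaces yields $\det S = +4|H|^{2}$ exactly for $(C,1)$, $(C,3)$, $(S,2)$, and $\det S = -4|H|^{2} < 0$ for the remaining classes $(S,1)$, $(C,2)$, $(S,3)$, for which no such real parallel {\sc CMC}-surface exists. Confirming this dichotomy, and checking that the chosen orientation $c = -1/(2|H|)$ delivers the positive value $H^{(\cdot)} = |H|$ rather than $-|H|$, is where the care lies.
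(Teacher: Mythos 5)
Your proposal is correct and follows essentially the same route as the paper: both define the parallel surface as $\varPhi^{(\cdot)} + c\,N^{(\cdot)}$ with $|c| = 1/(2|H|)$ and verify constancy of the mean curvature from the fundamental forms, the paper by direct computation with the explicit matrices \eqref{eq:firstforreal} and \eqref{eq:secondforint}, you by the equivalent Cayley--Hamilton bookkeeping for the shape operator. The only cosmetic difference is that the paper takes $c = +\tfrac{1}{2|H|}$ where you take $c = -\tfrac{1}{2|H|}$; this just flips the sign of $H_{\varPsi}$ relative to the fixed normal $N^{(\cdot)}$ and is absorbed by the choice of unit normal, so it does not affect the statement.
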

\begin{proof}
 Let $\varPhi^{(\mathfrak{c}, 1)}$, $\varPhi^{(\mathfrak{c},
 3)}$ and $\varPhi^{(\mathfrak{s}, 2)}$ be a spacelike constant
 negative Gau{\ss}ian curvature surface
 in $\mathbb R^{1,2}$, a constant positive Gau{\ss}ian curvature surface
 in $\mathbb R^3$ and a timelike constant positive Gau{\ss}ian curvature
 surface in $\mathbb R^{1,2}$, as defined in Theorem \ref{thm:compactsplit},
 respectively. Let $N^{(\mathfrak{c}, 1)}$, $N^{(\mathfrak{c},
 3)}$ and $N^{(\mathfrak{s},
 2)}$ be the Gau{\ss} maps for $\varPhi^{(\mathfrak{c}, 1)}$, $\varPhi^{(\mathfrak{c},
 3)}$ and $\varPhi^{(\mathfrak{s}, 2)}$ defined in \eqref{eq:Gaussmap}, respectively.
 Then the parallel surfaces for $\varPhi^{(\mathfrak{c},
 1)}$, $\varPhi^{(\mathfrak{c}, 3)}$ and
 $\varPhi^{(\mathfrak{s}, 2)}$ are defined by
 \begin{equation*}
\left\{
\begin{array}{l}
 \varPsi^{(\mathfrak{c}, j)}  := \varPhi^{(\mathfrak{c}, j)} +
  \frac{1}{2 |H|} N^{(\mathfrak{c}, j)} \;\;\;\mbox{for $j \in \{1, 3\}$,}\\[0.2cm]
  \varPsi^{(\mathfrak{s}, 2)}  := \varPhi^{(\mathfrak{s}, 2)} +
  \frac{1}{2 |H|} N^{(\mathfrak{s}, 2)}.
\end{array}
\right.
\end{equation*}
 Then the first fundamental forms and the second
 fundamental forms for these immersions can be computed explicitly, and
 we can easily show that these immersions $\varPsi^{(\mathfrak{c}, 1)}$,
 $\varPsi^{(\mathfrak{c}, 3)}$ and $\varPsi^{(\mathfrak{s}, 2)}$
 define a spacelike constant mean curvature surface with mean curvature
 $H^{(\mathfrak c, 1)} =|H|$ in $\mathbb R^{1,2}$, a constant mean
 curvature surface with mean curvature $H^{(\mathfrak c, 3)} =|H|$ in
 $\mathbb R^3$ and a timelike constant mean curvature surface with mean
 curvature $H^{(\mathfrak s, 2)} =|H|$ in $\mathbb R^{1,2}$,
 respectively. This completes the proof.
\end{proof}
\begin{Definition}
 The surfaces defined in Theorem \ref{thm:compactsplit} and
 Corollary \ref{coro:compactsplit} are called the {\rm integrable surfaces}.
\end{Definition}
\begin{Remark}
 For the three classes of surfaces in Theorem \ref{thm:compactsplit}, which
 are spacelike constant positive Gau{\ss}ian curvature surfaces
 in $\mathbb R^{1, 2}$, constant negative Gau{\ss}ian curvature surfaces in $\mathbb R^3$ and
 timelike constant negative Gau{\ss}ian curvature surfaces in $\mathbb
 R^{1,2}$, there never exist parallel constant mean curvature
 surfaces.
\end{Remark}

\subsection{Gau{\ss} maps of integrable surfaces}\label{subsc:gaussmap}
 In this subsection, we consider the Gau{\ss} maps of integrable
 surfaces defined in the previous section.

 From \cite{DKP:Complex}, it is known that the complex Gau{\ss} map $N$
  of a complex {\sc CMC}-immersion $\varPsi$ with null coordinates
 ($N$ is also the complex Gau{\ss} map of the parallel complex {\sc
 CGC}-immersion) satisfies the following equation:
 \begin{equation}\label{eq:normal}
  N_{z w} = \rho N \;,
 \end{equation}
 where $(z, w) \in \mathfrak D^2 \subset \mathbb C^2$ 
 and the function $\rho : \mathfrak D^2 \to \mathbb C$ is defined by
 $\rho \cdot i\sigma_3 = [\alpha_{-1}, [\alpha_{1}, i \sigma_3]]$ with
 $\alpha_{j}$ as defined in \eqref{eq:alpha2}.

 From Theorem \ref{thm:Sym-Bob}, we note that the complex Gau{\ss} 
 map $N$ is represented by $ N = \frac{i}{2}{\rm Ad} (F)\; \sigma_3,$
 where $F$ is the complex extended framing of the complex {\sc
 CMC}-immersion $\varPsi$.
  Let $F^{(\mathfrak{c}, j)}$ (resp. $F^{(\mathfrak{s}, j)}$) be the
  extended framing of $\varPhi^{(\mathfrak{c}, j)}$ for $j \in \{1, 2, 3, 4\}$
 (resp. $\varPhi^{(\mathfrak{s}, j)}$ for $j \in \{1, 2, 3 \}$).
  Using \eqref{eq:Gaussmap}, we can easily verify that $N^{(\mathfrak{c}, j)}$
  and $N^{(\mathfrak{s}, j)}$ are maps into the following spaces:
 \begin{equation*}
 \left\{
 \begin{array}{l}
 H^2 = SU(1, 1)/U(1) \;\;\mbox{for}\;\; j =1, \\[0.1cm]
 S^{1, 1} = SL_*(2, \mathbb R)/K\;\;\mbox{for}\;\; j =2, \\[0.1cm]
 S^2 = SU(2)/U(1)\;\;\mbox{for}\;\; j =3, \\[0.1cm]
 SL(2, \mathbb C)/U(1) \;\;\mbox{for}\;\; j =4,
 \end{array}
 \right.
 \end{equation*}
  where $K = \left\{{\rm diag}[a, a^{-1}] \;|\; a \in \mathbb R^* \right\}$, 
  which   is isomorphic to $\mathbb R^*$, and $SL_*(2, \mathbb R) =\{ g \in
  SL(2, \mathbb C)\;|\;g =\left(\begin{smallmatrix} a & b\\ c&
				d\end{smallmatrix}\right), a, d \in 
  \mathbb R, b, c \in i \mathbb R \}$, which is isomorphic to $SL(2,
  \mathbb R)$. It is known that the space $SL(2, \mathbb C)/U(1)$ is a $4$-symmetric
   space via the fourth order automorphism 
  $$
   X \mapsto {\rm Ad}\begin{pmatrix}1/\sqrt{i} & 0 \\ 0& \sqrt{i}
		     \end{pmatrix} \left(\bar X^{t}\right)^{-1}\;\; \mbox{for}\;\; X \in SL(2, \mathbb C).
  $$ The choices of coordinates in
   \eqref{eq:coordinates}, the functions in \eqref{eq:solforMaurer} and
   the relation $\rho i \sigma_3 =[\alpha_{-1}[\alpha_1, i \sigma_3]]$ imply
 \begin{equation}\label{eq:Gaussforint}
 \left(N^{(\mathfrak{c}, j)}\right)_{z \bar z} = \rho^{(\mathfrak{c}, j)} N^{(\mathfrak{c}, j)}
  \;\;\mbox{and}\;\;  \left(N^{(\mathfrak{s},j)}\right)_{x y} =
  \rho^{(\mathfrak{s}, j)}N^{(\mathfrak{s},j)}\;\;,
 \end{equation}
 where $\rho^{(\mathfrak{c}, j)} : \mathfrak D \subset \mathbb C \to \mathbb R$
 and $\rho^{(\mathfrak{s}, j)} : \mathfrak D \subset \mathbb R^2 \to
 \mathbb R$.

 It is well known that the equations in \eqref{eq:Gaussforint} for $j
 \in \{1, 2, 3 \}$ are equivalent to the harmonicities (resp. Lorentz
 harmonicities) of Gau{\ss} maps $N^{(\mathfrak{c},
 j)}$ (resp. $ N^{(\mathfrak{s}, j)}$)  with respect to the second fundamental
 forms defined in the first equations of \eqref{eq:secondforint}
 (resp. third equations of \eqref{eq:secondforint}), see Theorem 13 in 
 \cite{Klotz:Harm-Mink}. 
 We then have the following theorem:
\begin{Theorem}\label{thm:Gaussmap}
 Let $\varPhi^{(\mathfrak{c}, j)}$  for $j \in \{1, 2, 3, 4\}$
 and $\varPhi^{(\mathfrak{s}, j)}$ for $j \in \{1, 2, 3\}$  be
 the integrable surfaces defined in Theorem \ref{thm:compactsplit}
 respectively. Moreover, let $N^{(\mathfrak{c}, j)}$ and
 $N^{(\mathfrak{s}, j)}$ be their Gau{\ss} maps respectively. 
 Then the Gau{\ss} maps $N^{(\mathfrak{c}, j)}$ and
 $N^{(\mathfrak{s}, j)}$ are characterized as follows:
\begin{enumerate}
\item[$(C, S, 1)$] The Gau{\ss} map $N^{(\mathfrak{c}, 1)}$
		 (resp. $N^{(\mathfrak{s}, 1)}$) is a harmonic
		 (resp. Lorentz harmonic) map into $H^2$.

\item[$(C, S, 2)$] The Gau{\ss} map $N^{(\mathfrak{c}, 2)}$
		 (resp. $N^{(\mathfrak{s}, 2)}$) is a harmonic
		 (resp. Lorentz harmonic) map into $S^{1, 1}$.

\item[$(C, S, 3)$] The Gau{\ss} map $N^{(\mathfrak{c}, 3)}$
		 (resp. $N^{(\mathfrak{s}, 3)}$) is a harmonic
		 (resp. Lorentz harmonic) map into $S^{2}$.

\item[$(C, 4)$\;\;\;\;] The Gau{\ss} map $N^{(\mathfrak{c}, 4)}$ is a
		 harmonic map into $SL(2, \mathbb C)/U(1)$.
\end{enumerate}
\end{Theorem}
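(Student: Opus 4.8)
The plan is to derive all seven harmonicity statements from the single complex Gau{\ss} map equation $N_{zw} = \rho N$ of \eqref{eq:normal}, by restricting to the real coordinates and reality conditions attached to each real form, and then invoking the standard correspondence between such equations and (Lorentz) harmonic maps into two-dimensional symmetric spaces for $j \in \{1,2,3\}$, with the four-symmetric target handled separately for $j = 4$.

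First I would record that the targets have already been identified in the paragraph preceding the theorem: from the definitions \eqref{eq:Gaussmap} together with the reality conditions \eqref{eq:solforMaurer}, the maps $N^{(\mathfrak c,1)}, N^{(\mathfrak s,1)}$ take values in $H^2$, the maps $N^{(\mathfrak c,2)}, N^{(\mathfrak s,2)}$ in $S^{1,1}$, the maps $N^{(\mathfrak c,3)}, N^{(\mathfrak s,3)}$ in $S^2$, and $N^{(\mathfrak c,4)}$ in $SL(2,\mathbb C)/U(1)$, so that only the harmonicity assertions remain to be proved.

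Next, for $j \in \{1,2,3\}$ I would specialize $N_{zw} = \rho N$, where $\rho\, i\sigma_3 = [\alpha_{-1},[\alpha_1, i\sigma_3]]$ with $\alpha_{\pm 1}$ as in \eqref{eq:alpha2}. Substituting the coordinate choices of \eqref{eq:coordinates}, namely $w = \bar z$ in the almost compact cases and $z = \bar z = x$, $w = \bar w = y$ in the almost split cases, together with the reality conditions on $u,Q,R,H$ from \eqref{eq:solforMaurer}, a direct computation shows that $\rho$ is real-valued, which is precisely the content of \eqref{eq:Gaussforint}. Since in each of these six cases the target is a two-dimensional pseudo-Riemannian symmetric space, the equation, which uses the Laplacian $\partial_z\partial_{\bar z}$ on the Riemannian domain in the compact case and the wave operator $\partial_x\partial_y$ on the Lorentzian domain in the split case, asserts exactly that the Laplacian (resp. d'Alembertian) of $N$ is everywhere normal to the target, i.e. that the tension field of $N$ vanishes. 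This is the (Lorentz) harmonic map equation, and I would conclude by citing Theorem 13 of \cite{Klotz:Harm-Mink}, giving harmonicity in the compact coordinate cases and Lorentz harmonicity in the split coordinate cases.

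Finally, and this is where the essential difficulty lies, I would treat $j = 4$. The target $SL(2,\mathbb C)/U(1)$ is not symmetric but $4$-symmetric, with the order-four automorphism $X \mapsto {\rm Ad}\bigl(\begin{smallmatrix}1/\sqrt i & 0 \\ 0 & \sqrt i\end{smallmatrix}\bigr)(\bar X^t)^{-1}$ recorded above, so the naive ``tension field is normal'' criterion is not the correct one. Instead I would use the loop group characterization: by construction $F^{(\mathfrak c,4)}$ is a loop in $\Lambda SL(2,\mathbb C)_\sigma^{(\mathfrak c,4)}$, so its Maurer-Cartan form $\lambda^{-1}\alpha_{-1}+\alpha_0+\lambda\alpha_1$ is flat for every $\lambda \in S^r$ and its $\lambda$-grading is compatible with the order-four automorphism, the terms $\alpha_{\pm 1}$ lying in the relevant eigenspaces and $\alpha_0$ in the fixed subalgebra. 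The existence of this $S^r$-family of flat connections with the prescribed grading is exactly the statement that $N^{(\mathfrak c,4)}$ is a primitive, hence harmonic, map into the $4$-symmetric space $SL(2,\mathbb C)/U(1)$, which is moreover consistent with the known harmonicity of the hyperbolic Gau{\ss} map of a {\sc CMC}-surface with $|H|<1$ in $H^3$ (cf. \cite{BB:MiniH3}). The main obstacle is precisely this last step: one must verify that the $\lambda$-grading of $\alpha$ genuinely matches the order-four automorphism and that ``harmonic into $SL(2,\mathbb C)/U(1)$'' is read in the primitive $4$-symmetric sense, whereas the six symmetric-space cases reduce, once the reality of $\rho$ has been checked by routine substitution, to the cited equivalence.
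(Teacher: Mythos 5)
Your proposal is correct and follows essentially the same route as the paper: the six symmetric-space cases are reduced to the reality of $\rho$ in $N_{zw}=\rho N$ under the coordinate and reality conditions of \eqref{eq:coordinates} and \eqref{eq:solforMaurer}, with the equivalence to (Lorentz) harmonicity taken from Theorem 13 of \cite{Klotz:Harm-Mink}, exactly as in the paragraph containing \eqref{eq:Gaussforint}; and the $(C,4)$ case is handled by the same loop-group criterion the paper invokes from \cite{BP:Adler}, namely the flat $\lambda$-family $\lambda^{-1}\alpha_{-1}+\alpha_0+\lambda\alpha_1$ with $\alpha_{\pm1}$ in the $\mathfrak m$-eigenspaces and $\alpha_0$ in $\mathfrak k$, the off-diagonal form of $\alpha_{\pm1}$ giving $[\alpha_{\mathfrak m}'\wedge\alpha_{\mathfrak m}'']_{\mathfrak m}=0$. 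The only cosmetic difference is that the paper's formal proof also folds the symmetric cases into the \cite{BP:Adler} criterion (where the bracket condition is vacuous), whereas you argue them via the tension field; both are equivalent and both appear in the paper's exposition.
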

\begin{proof}
 Let us show the $(C, 4)$ case. 
 We recall that a map from a Riemann surface into a $k$-symmetric space
 $N = G/K$ is harmonic (see, for example, \cite[page
 242]{BP:Adler}) if
 \begin{equation*}
 \begin{array}{l}
 [\alpha_{\mathfrak m}^{\prime} \wedge \alpha_{\mathfrak m}^{\prime \prime}]_{\mathfrak m} =0,\\[0.1cm]
 d \alpha_{\lambda} +\frac{1}{2}[\alpha_{\lambda} \wedge  \alpha_{\lambda}] =0,
 \end{array}
 \end{equation*}
 where  $\alpha_{\lambda} = \lambda^{-1} \alpha_{\mathfrak m}^{\prime} +
 \alpha_{\mathfrak k} + \lambda \alpha_{\mathfrak m}^{\prime \prime}$,
 $\mathfrak g = \mathfrak{k} \oplus \mathfrak{m}$ is the reductive
 decomposition and ${}^{\prime}$
 (resp. ${}^{\prime \prime}$) denotes the $(1,0)$-part (resp. $(0,1)$-part).
 Since the map $N^{(\mathfrak c, 4)}$ has the lift $F^{(\mathfrak c, 4)}
 : \mathfrak D \to \Lambda SL(2, \mathbb C)_\sigma^{(\mathfrak c, j)}$ which is
 defined from the complex extended framing $F$ with the conditions in
 \eqref{eq:solforMaurer},  the Maurer-Cartan
 form 
 $\alpha_{\lambda} = F^{(\mathfrak c,
 4)-1} d F^{(\mathfrak c, 4)}$ has the form  $\alpha_{\lambda} =
 \lambda^{-1} \alpha_{-1}+ \alpha_0+\lambda \alpha_1$ and satisfies the
 Maurer-Cartan equation $d \alpha_{\lambda} +
 \frac{1}{2}[\alpha_{\lambda} \wedge \alpha_{\lambda}] =0$, 
 see \eqref{eq:alpha} and Lemma \ref{lem:complex-CMC}.
 The conditions in \eqref{eq:solforMaurer} imply $\alpha_0 = \alpha_{\mathfrak k}$,
 $\alpha_{-1} = \alpha_{\mathfrak m}^{\prime}$ and $\alpha_{1} =
 \alpha_{\mathfrak m}^{\prime \prime}$, where $\mathfrak{sl}(2, \mathbb
 C) = \mathfrak{k} \oplus \mathfrak{m}$ is the reductive decomposition
 associated to $SL(2, \mathbb C)/U(1)$. Moreover, since $\alpha_{-1}$
 and $\alpha_{1}$ have  off-diagonal forms, it follows that 
 $[\alpha_{\mathfrak m}^{\prime} \wedge \alpha_{\mathfrak m}^{\prime
 \prime}]_{\mathfrak m} =0$. Therefore $N^{(\mathfrak c, 4)}$ is a
 harmonic map into the $4$-symmetric space $SL(2, \mathbb C)/U(1)$.  For
 other cases, since the target spaces are symmetric spaces,
 the condition $[\alpha_{\mathfrak m}^{\prime} \wedge \alpha_{\mathfrak m}^{\prime
 \prime}]_{\mathfrak m} =0$ is vacuous. Thus the Maurer-Cartan equation
 $d \alpha_{\lambda} + \frac{1}{2}[\alpha_{\lambda} \wedge
 \alpha_{\lambda}] =0$ with $\alpha_{\lambda} = \lambda^{-1}
 \alpha_{-1}+ \alpha_0+\lambda \alpha_1$ is equivalent to the map
 being harmonic or Lorentz harmonic. This completes the proof.
\end{proof}

\begin{Remark}
 In fact, in the $(C, 4)$ case, the harmonic map $N^{(\mathfrak{c}, 4)}$
 into the $4$-symmetric space $SL(2, \mathbb C)/U(1)$ is known as the
 so-called {\rm Legendre harmonic map} \cite{Ishihara:GG}.  We will
 discuss this topic in a separate publication \cite{DIK:H3}.
\end{Remark}

\begin{table}
\extrarowheight=1mm
\begin{tabular}{|c|c|c|c|}
\hline
{\small Surfaces class} & {\small Gau{\ss} curvature} & {\small Gau{\ss} 
 curvature} &{\small Parallel {\sc CMC}}\\[1mm]\hline
{\small Surfaces in $\mathbb R^3$ }& {\small $K^{(\mathfrak{s}, 3)} = -4 |H|^2$}& {\small $K^{(\mathfrak{c}, 3)}= 4|H|^2$}& $H^{(\mathfrak c,
 3)}= |H|$ \\[1mm] \hline
{\small Spacelike surfaces in $\mathbb R^{1,2}$} & {\small $K^{(\mathfrak{s}, 1)} =  4
 |H|^2$}& {\small $K^{(\mathfrak{c}, 1)}=-4 |H|^2\;$}&$H^{(\mathfrak c, 1)} = |H|$ \\[1mm] \hline
{\small Timelike surfaces in $\mathbb R^{1,2}$} & {\small $K^{(\mathfrak{c}, 2)} = -4 |H|^2$} &
 {\small $K^{(\mathfrak{s}, 2)}=4 |H|^2$} & $H^{(\mathfrak s,
 2)} = |H|$\\ \hline
{\small Surfaces in $H^3$} &  & & $H^{(\mathfrak c, 4)} = \tanh
 (-q)$ \\[1mm] \hline
\end{tabular}
\caption{Integrable surfaces defined by the real forms of $\Lambda \mathfrak{sl}(2, \mathbb C)_{\sigma}$}
\end{table}
\section{The generalized Weierstra{\ss} type representation for integrable
 surfaces}\label{sc:DPW} The generalized Weierstra{\ss} type
 representation for complex {\sc CMC}-immersions, or equivalently {\sc
 CGC}-immersions as the parallel immersions,
 is the procedure of a
 construction of complex {\sc CMC}-immersions from
 a pair of holomorphic potentials, see \cite{DKP:Complex}. In the
 previous section, we classified all integrable surfaces according to the
 classification of  real forms of $\Lambda \mathfrak{sl}(2, \mathbb
 C)_{\sigma}$.  In this section, we show how all integrable surfaces
 are obtained from the pairs of holomorphic potentials in the generalized
 Weierstra{\ss} type representation.

\subsection{Integrable surfaces via the generalized Weierstra{\ss} type
  representation}\label{subsc:DPW}
 The generalized Weierstra{\ss} type representation for complex {\sc
 CMC}-immersions,  or equivalently {\sc CGC}-immersions as the parallel
 immersions,
 is divided into the following 4 steps, see also
 \cite{DKP:Complex} for more details:
 \begin{description}
 \item[Step 1]  Let  $\Check{\eta} =
 (\eta (z, \lambda), \tau(w, \lambda))$ be a pair of holomorphic
	    potentials of the following forms:
 \begin{equation}
 \label{eq:eqforcheketa}
 \Check{\eta} = (\eta (z, \lambda),\;\; \tau (w, \lambda)) =
 \left(\sum_{k=-1}^{\infty} \eta_k (z) \lambda^{k}, \;\; \sum_{m=-\infty}^{1} \tau_m (w) \lambda^m \right)\;\;,
 \end{equation}
 where $(z, w) \in \mathfrak D^2$ and where $\mathfrak D^2$ is some
	    holomorphically convex domain in $\mathbb C^2$, $\lambda \in
	    \mathbb C^\ast$,  $|\lambda| = r$ $(0 < r < 1)$, and $\eta_k$ and
	    $\tau_m$ are $\mathfrak{sl}(2, \mathbb C)$-valued
	    holomorphic differential 1-forms. Moreover $\eta_k (z)$  and
	    $\tau_k (w)$ are diagonal (resp. off-diagonal) matrices if $k$ is even
	    (resp. odd). We also assume that the upper right entry
	    of $\eta_{-1} (z)$ and the lower left entry $\tau_{1} (w)$
	    do not vanish for all $(z, w) \in \mathfrak D^2$.

 \item[Step 2]  Let $C$ and $L$ denote
 the solutions to the following linear ordinary differential equations
 \begin{equation}\label{eq:eqforC^3}
 d C = C \eta \;\;\mbox{and}\;\;d L = L \tau\;\;\mbox{with}\;\;C(z_*, \lambda) = L(w_*, \lambda) = {\rm id},
 \end{equation}
 where $(z_*, w_*) \in \mathfrak D^2$ is a fixed base point.

\item[Step 3] We factorize the pair of matrices $(C, L)$ via the
	    generalized Iwasawa decomposition of Theorem
	    \ref{doublesplitting} as follows:
 \begin{equation}
 \label{eq:splittingCR}
 (C,\;\; L) = (F,\;\; F) ({\rm id} ,\;\;W) (V_+,\;\; V_-)\;\;,
 \end{equation}
 where $V_{\pm} \in \Lambda^{\pm} SL(2, \mathbb C)_{\sigma}$.
\end{description}
\begin{Theorem}[\cite{DKP:Complex}]\label{thm:DKP-Extendedframings}
 Let $F$ be a $\Lambda SL(2, \mathbb C)_\sigma$-loop defined by the
 generalized Iwasawa decomposition in \eqref{eq:splittingCR}. Then there
 exists a $\lambda$-independent diagonal matrix $l(z, w) \in SL(2,
 \mathbb C)$ such that $F\cdot l$ is a complex extended framing of some
 complex {\sc CMC}-immersion, or equivalently the complex {\sc
 CGC}-immersion as the parallel immersion.
\end{Theorem}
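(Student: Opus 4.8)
The plan is to follow the complex analogue of the Dorfmeister--Pedit--Wu normalization. From the generalized Iwasawa decomposition \eqref{eq:splittingCR} I read off the two factorizations $C = F V_+$ and $L = F W V_-$, with $V_+ \in \Lambda^+ SL(2,\mathbb C)_\sigma$, $V_- \in \Lambda^- SL(2,\mathbb C)_\sigma$, and $W$ $\lambda$-independent (a property of Theorem \ref{doublesplitting}). I then show that the Maurer--Cartan form $\alpha = F^{-1}dF = U\,dz + V\,dw$ already has the $\lambda$-degree shape of a complex extended framing, and that a single diagonal gauge puts it exactly into the form of Lemma \ref{lem:complex-CMC}(\ref{itm:extend}).

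First I would use that $C$ depends only on $(z,\lambda)$ and $L$ only on $(w,\lambda)$ (since $\eta=\eta(z,\lambda)$, $\tau=\tau(w,\lambda)$ and the base-point normalization), so $C_w=0$ and $L_z=0$. Computing $U=F^{-1}F_z$ and $V=F^{-1}F_w$ from \emph{both} factorizations and bounding degrees from the two sides simultaneously: the $C$-side gives $U = V_+\eta_{(z)}V_+^{-1}-(V_+)_zV_+^{-1}$ (powers $\ge -1$) and $V=-(V_+)_wV_+^{-1}$ (powers $\ge 0$), while the $L$-side, using $L_z=0$, $L_w=L\tau_{(w)}$ and the $\lambda$-independence of $W$, gives $U=-W_zW^{-1}-W(V_-)_zV_-^{-1}W^{-1}$ (powers $\le 0$) and an expression for $V$ of powers $\le 1$. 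Hence $U=\lambda^{-1}U_{-1}+U_0$ and $V=V_0+\lambda V_1$. The $\sigma$-twisting makes $U_{-1},V_1$ off-diagonal and $U_0,V_0$ trace-free diagonal, and the nonvanishing of the upper-right entry of $\eta_{-1}$ and the lower-left entry of $\tau_1$ survives conjugation by the diagonal $\lambda^0$-parts of $V_\pm$, so that $a:=(U_{-1})_{12}$ and $f:=(V_1)_{21}$ are nowhere zero.

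Next I would construct the diagonal, $\lambda$-independent gauge $l=\mathrm{diag}(e^{p/2},e^{-p/2})\in SL(2,\mathbb C)$. Writing $U_{-1}=\left(\begin{smallmatrix}0 & a\\ b & 0\end{smallmatrix}\right)$, $U_0=\mathrm{diag}(c,-c)$, $V_0=\mathrm{diag}(d,-d)$, $V_1=\left(\begin{smallmatrix}0 & e\\ f & 0\end{smallmatrix}\right)$, I fix $p$ by the single equation $e^{2p}=-a/f$; this is exactly the demand that in $(Fl)^{-1}d(Fl)=l^{-1}\alpha l+l^{-1}dl$ the two off-diagonal couplings agree, i.e.\ that $(\widetilde U_{-1})_{12}=-\tfrac12 He^{u/2}$ and $(\widetilde V_1)_{21}=+\tfrac12 He^{u/2}$ share one product $He^{u/2}$. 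I then set $u:=\log(af)+\mathrm{const}$, $Q:=e^{p}b\,e^{u/2}$, $R:=-e^{-p}e\,e^{u/2}$, and read off $H$ from $H^{2}e^{u}=-4af$, which is a nonzero constant by the choice of $u$.

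The verification that these definitions reproduce the form of Lemma \ref{lem:complex-CMC}(\ref{itm:extend}) is the heart of the matter, and where I expect the real work. Using only the flatness of $\alpha$ (equivalently the complex Gauss--Codazzi system \eqref{eq:GC}), the $\lambda^{\mp1}$-components yield $a_w=-2ad$ and $f_z=2cf$, whence $d=-a_w/2a$ and $c=f_z/2f$; substituting these together with $2p_z=(\log(a/f))_z$ and $2p_w=(\log(a/f))_w$ into the gauged diagonal entries gives $\widetilde U_0=\tfrac14 u_z\,\sigma_3$ and $\widetilde V_0=-\tfrac14 u_w\,\sigma_3$ for the \emph{same} $u=\log(af)$, so the metric function is well defined and the gauged $\alpha$ matches the required form on the nose, while the $\lambda^0$-component of flatness reproduces the complex Gauss equation as an identity. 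By the equivalence of (1) and (\ref{itm:extend}) in Lemma \ref{lem:complex-CMC}, this means $F\cdot l$ is the complex extended framing of a complex {\sc CMC}-immersion, equivalently of its parallel complex {\sc CGC}-immersion via Theorem \ref{thm:Sym-Bob}. The main obstacle is thus the bookkeeping that forces the two independent holomorphic data $\eta$ (in $z$) and $\tau$ (in $w$) to glue, through the single left factor $F$ of the diagonal decomposition, into one metric $u$ and one constant $H$; the decisive inputs are the $\lambda$-independence of the middle factor $W$ and the off-diagonal flatness relations that express $c,d$ as logarithmic derivatives of $f,a$.
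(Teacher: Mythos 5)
The paper does not actually prove this theorem: it is quoted from \cite{DKP:Complex} without an in-text argument, so there is nothing internal to compare against. Your reconstruction is the standard Dorfmeister--Pedit--Wu normalization transplanted to the complex setting, and the computations check out: the two-sided degree count from $C=FV_+$ and $L=FWV_-$ pins $\alpha=F^{-1}dF$ to $\lambda$-degrees $\{-1,0\}$ in $dz$ and $\{0,1\}$ in $dw$; the twisting sorts diagonal from off-diagonal coefficients; the gauge $e^{2p}=-a/f$ together with $u=\log(af)+\mathrm{const}$ forces $H^2=-4af\,e^{-u}$ to be a nonzero constant; and the $\lambda^{\mp1}$-components of flatness ($a_w=-2ad$, $f_z=2cf$) are exactly what makes the gauged diagonal entries equal $\pm\tfrac14 u_z$ and $\mp\tfrac14 u_w$ for that same $u$. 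This lands in the form of Lemma \ref{lem:complex-cmc-placeholder}(\ref{itm:extend}) --- more precisely Lemma \ref{lem:complex-CMC}(\ref{itm:extend}) --- and Theorem \ref{thm:Sym-Bob} then supplies the immersion.

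One point needs repair. You assert that the middle factor $W$ is $\lambda$-independent ``as a property of Theorem \ref{doublesplitting}''; the theorem says the opposite: the middle terms $w_n$ are $\left(\begin{smallmatrix}\lambda^{n}&0\\0&\lambda^{-n}\end{smallmatrix}\right)$ or antidiagonal with entries $\pm\lambda^{\pm n}$, hence genuinely $\lambda$-dependent for $n\neq 0$. If $W\neq\mathrm{id}$, conjugation by $W$ shifts the $\lambda$-degrees coming from the $L$-side and your bound ``powers $\le 1$'' for $V$ fails, so $F\cdot l$ need not be an extended framing there. The correct input is that the big cell ($n=0$, $W=\mathrm{id}$) is open and dense and contains the base point value $(\mathrm{id},\mathrm{id})$, so on a neighborhood of $(z_*,w_*)$ one has $W=\mathrm{id}$; this is exactly the localization the paper itself imposes in the remark following Theorem \ref{thm:DPWforint}. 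With that restriction made explicit (and a branch of $\log$ chosen on the simply connected domain to define $p$ and $u$), your argument is complete.

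\begingroup
\renewcommand{\thefootnote}{}
\footnotetext{Note: the reference to ``Lemma \ref{lem:complex-cmc-placeholder}'' above should be read simply as Lemma \ref{lem:complex-CMC}.}
\endgroup
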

\begin{description}
 \item[Step 4] The Sym formula defined in \eqref{eq:Sym-Bobenko} via
	    $F(z, w, \lambda)l (z, w)$ represents
	    a complex {\sc CMC}-immersion and a complex {\sc CGC}-immersion
	    in $\mathfrak{sl}(2, \mathbb C) \cong \mathbb C^3$.
\end{description}
 Let $\mathfrak{c}_j $ for $j \in \{1, 2, 3, 4\}$ and $\mathfrak{s}_j$
 for $j \in \{1, 2, 3\}$ be the involutions defined in
 \eqref{eq:inv-1-forms}, respectively. Then we define the following pairs of involutions on
 $\check\eta = (\eta, \tau) \in \Omega(\Lambda \mathfrak{sl}(2, \mathbb
 C)_\sigma) \times \Omega(\Lambda \mathfrak{sl}(2, \mathbb C)_\sigma)$:
\begin{equation}\label{eq:involutions}
\mathfrak{r}_j : (\eta, \tau)  \longmapsto (\mathfrak{c}_j \tau, \;\mathfrak{c}_j \eta)
 \;\;\mbox{and}\;\;\mathfrak{d}_j : (\eta, \tau)   \longmapsto (\mathfrak{s}_j \eta, \;\mathfrak{s}_j \tau).
\end{equation}
We now prove the following theorem.
\begin{Theorem}\label{thm:DPWforint}
 Let $\check \eta = (\eta(z, \lambda), \tau (w, \lambda))$ be a pair of
 holomorphic potentials defined as in \eqref{eq:eqforcheketa}, and let
 $\mathfrak{r}_j$ for  $ \;j \in \{1, 2, 3, 4\}$ and $\mathfrak{d}_j$
 for $j \in \{1, 2, 3\}$ be the pairs of involutions defined in
 \eqref{eq:involutions}, respectively. Then the following statements hold:
\begin{enumerate}
\item[$(C, 1)$] If $ \mathfrak{r}_1 (\check \eta) = \check \eta$, then the
	      resulting immersions given by the generalized
	      Weierstra{\ss} type representation are
	      spacelike constant negative Gau{\ss}ian curvature surfaces
	      in $\mathbb R^{1,2}$.

\item[$(C, 2)$] If $\mathfrak{r}_2 (\check \eta) = \check \eta$, then the
	      resulting immersions given by the generalized
	      Weierstra{\ss} type representation are timelike constant
	      negative Gau{\ss}ian curvature surfaces
	      in $\mathbb R^{1,2}$.

\item[$(C, 3)$] If $\mathfrak{r}_3 (\check \eta) = \check \eta$, then the
	      resulting immersions given by the generalized
	      Weierstra{\ss} type representation are
	      constant positive Gau{\ss}ian curvature surfaces
	      in $\mathbb R^{3}$.

\item[$(C, 4)$] If $\mathfrak{r}_4 (\check \eta) = \check \eta$, then the
	      resulting immersions given by the generalized
	      Weierstra{\ss} type representation are
	      constant mean curvature surfaces with mean curvature
		$|H^{(\mathfrak{c}, 4)}| < 1$ in $H^3$.

\item[$(S, 1)$] If $\mathfrak{d}_1 (\check \eta) = \check \eta$, then the
	      resulting immersions given by the generalized
	      Weierstra{\ss} type representation are
	      spacelike constant positive Gau{\ss}ian curvature surfaces
	      in $\mathbb R^{1,2}$.

\item[$(S, 2)$] If $\mathfrak{d}_2 (\check \eta) = \check \eta$, then the
	      resulting immersions given by the generalized
	      Weierstra{\ss} type representation are
	      timelike constant positive Gau{\ss}ian curvature surfaces
              in $\mathbb R^{1,2}$.

\item[$(S, 3)$] If $\mathfrak{d}_3 (\check \eta) = \check \eta$, then the
	      resulting immersions given by the generalized
	      Weierstra{\ss} type representation are
	      constant negative Gau{\ss}ian curvature surfaces in
	      $\mathbb R^{3}$.

\end{enumerate}
 \end{Theorem}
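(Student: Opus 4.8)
The plan is to show that the symmetry $\mathfrak{r}_j(\check\eta) = \check\eta$ (resp. $\mathfrak{d}_j(\check\eta) = \check\eta$) propagates through Steps 2--4 of the generalized Weierstra{\ss} type representation and forces the resulting extended framing into the real form loop group $\Lambda SL(2,\mathbb C)_\sigma^{(\mathfrak c, j)}$ (resp. $\Lambda SL(2,\mathbb C)_\sigma^{(\mathfrak s, j)}$); once this is established, each assertion follows immediately from Theorem \ref{thm:compactsplit}. Throughout I denote by the same symbols $\mathfrak c_j, \mathfrak s_j$ the group-level semi-linear involutions lifting the Lie-algebra involutions of \eqref{eq:inv-1-forms}, whose fixed-point sets are precisely the real form loop groups of \eqref{eq:RealformGroup}.

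First I would trace the symmetry down to the ODE solutions of Step 2. For the almost compact cases the defining invariance reads $\eta = \mathfrak c_j\tau$ and $\tau = \mathfrak c_j\eta$, and the matching coordinate condition is $w = \bar z$ from \eqref{eq:coordinates}. Choosing a base point with $w_* = \bar z_*$, I would apply $\mathfrak c_j$ to $dC = C\eta$; since $\mathfrak c_j$ is semi-linear and carries $z$ to $w = \bar z$, the image $\mathfrak c_j C$ satisfies $d(\mathfrak c_j C) = (\mathfrak c_j C)(\mathfrak c_j\eta) = (\mathfrak c_j C)\tau$, i.e. the same linear system as $L$. As $\mathfrak c_j(\mathrm{id}) = \mathrm{id}$ and the initial conditions agree, uniqueness of solutions to the equations of Step 2 yields $\mathfrak c_j C = L$ and likewise $\mathfrak c_j L = C$; equivalently $\mathfrak c_j(C, L) = (L, C)$, the swap of the pair. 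For the almost split cases the analogous computation with $\eta = \mathfrak s_j\eta$, $\tau = \mathfrak s_j\tau$ and real coordinates $z = x$, $w = y$ gives the stronger conclusion $\mathfrak s_j C = C$ and $\mathfrak s_j L = L$ separately.

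Next I would feed these symmetries into the uniqueness of the generalized Iwasawa decomposition of Theorem \ref{doublesplitting}. Applying $\mathfrak c_j$ to the factorization $(C, L) = (F, F)(\mathrm{id}, W)(V_+, V_-)$ and comparing with the swapped factorization $(L, C) = (F, F)(W, \mathrm{id})(V_-, V_+)$ obtained directly from the decomposition of $(C, L)$, the uniqueness of the diagonal real-form factor forces $\mathfrak c_j F = F$ up to the $\lambda$-independent diagonal ambiguity; absorbing the correction matrix $l$ of Theorem \ref{thm:DKP-Extendedframings}, the extended framing $F\cdot l$ lands in $\Lambda SL(2,\mathbb C)_\sigma^{(\mathfrak c, j)}$. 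In the split cases the fixed-point relations $\mathfrak s_j C = C$, $\mathfrak s_j L = L$ feed into the same uniqueness statement to place $F\cdot l$ in $\Lambda SL(2,\mathbb C)_\sigma^{(\mathfrak s, j)}$. In either case the Maurer-Cartan form $\alpha = (F\cdot l)^{-1}d(F\cdot l)$ then lies in the corresponding real form $\Omega(\Lambda\mathfrak{sl}(2,\mathbb C)_\sigma^{(\mathfrak c, j)})$ (resp. $\Omega(\Lambda\mathfrak{sl}(2,\mathbb C)_\sigma^{(\mathfrak s, j)})$), so the hypotheses of Theorem \ref{thm:compactsplit} are met and the Sym formula of Step 4 produces exactly the surface listed there for the given case.

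The main obstacle I anticipate is verifying that $\mathfrak c_j$ is compatible with the factor structure of the double Iwasawa decomposition, so that its uniqueness can legitimately be invoked. Because the compact involutions send $\lambda \mapsto \pm 1/\bar\lambda$ (or $i/\bar\lambda$ for $j = 4$), they interchange $\Lambda^+ SL(2,\mathbb C)_\sigma$ with $\Lambda^- SL(2,\mathbb C)_\sigma$ and act nontrivially on the middle factor $W$; one must check that $\mathfrak c_j\big[(\mathrm{id}, W)(V_+, V_-)\big]$ is again an admissible ``positive'' factor for the decomposition of the swapped pair, and track the $\lambda$-independent diagonal $l$ carefully so that it is the framing $F\cdot l$, and not $F$ alone, that is certified to lie in the real form. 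For the split involutions $\lambda \mapsto \pm\bar\lambda$ preserves the splitting circle and the bookkeeping is lighter, but the same compatibility must be confirmed. The remaining points --- that the base-point normalization can be arranged to respect the symmetry, and that the seven cases are handled uniformly --- are routine once this compatibility is in place.
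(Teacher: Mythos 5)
Your proposal follows essentially the same route as the paper: propagate the invariance of $\check\eta$ to the coordinate identifications and to the pair $(C,L)$ via uniqueness of the ODE solutions, push it through the generalized Iwasawa decomposition to obtain $\mathcal C_j(F)=F k$ (resp. $\mathcal S_j(F)=Fk$) for a $\lambda$-independent diagonal $k$, correct $F$ by a diagonal factor to land in the real form loop group, and then invoke Theorem \ref{thm:compactsplit}. The only cosmetic difference is that the paper realizes the diagonal correction explicitly as a square root $\tilde k$ of $k$ satisfying $\mathcal C_j(\tilde k)=\tilde k^{-1}$ rather than identifying it with the matrix $l$ of Theorem \ref{thm:DKP-Extendedframings}, a point you already flag as needing care.
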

\begin{proof}
 Since the pairs of holomorphic potentials are invariant under the 
 involutions $\mathfrak{r}_j$ or $\mathfrak{d}_j$, the coordinates $(z,
 w) \in \mathfrak D^2$ satisfy the following relations:
 \begin{equation}\label{eq:conjasymp}
 \left\{
  \begin{array}{l}
   w = \bar z \;\;\mbox{if} \;\; \mathfrak{r}_j (\check \eta) = \check \eta, \\
   z = \bar z \;\mbox{and}\; w = \bar w\;\;\mbox{if} \;\; \mathfrak{d}_j (\check \eta) = \check \eta.
  \end{array}
 \right.
 \end{equation}
  Let $(C, L)$ be the pair of solutions of the differential equations in
 \eqref{eq:eqforC^3} with the initial conditions $C(z_*) = L(w_*) = {\rm id}$, 
 where $(z_*, w_*) \in \mathfrak D^2$ satisfies one of the conditions in
 \eqref{eq:conjasymp}. 
  Let $\mathcal R_j$ for $j \in \{1, 2, 3, 4\}$ (resp. $\mathcal D_j$ for
 $j \in \{1, 2, 3\}$) be the following pair of involutions on $\Lambda
 SL(2, \mathbb C)_{\sigma} \times \Lambda SL(2, \mathbb C)_\sigma$:
 \begin{equation}\label{eq:mathcalG}
  \mathcal{R}_j (C, L):= (\mathcal C_j (L), \mathcal C_j (C))
   \;\;\mbox{and} \;\; \mathcal D_j (C, L):= (\mathcal S_j (C), \mathcal
   S_j (L)) \;\;,
 \end{equation}
 where $\mathcal C_j$ (resp. $\mathcal S_j$) are the involutions on
 $\Lambda SL(2, \mathbb C)_{\sigma}$ corresponding to the
 involutions $\mathfrak{c}_j$ (resp. $\mathfrak{s}_j$) as in
 Theorem \ref{thm:almostcompact} (resp. Theorem \ref{thm:almostsplit}), e.g., 
\begin{equation*}
 \mathcal C_1 : C(\lambda) \to \overline{C(-1/\bar
  \lambda)}^{t -1}\;\;\mbox{and}\;\;\mathcal S_1 : C(\lambda) \to
  \overline{C(- \bar \lambda)}^{t -1} \;\;\mbox{for}\;\;
  C(\lambda) \in \Lambda SL(2, \mathbb C)_\sigma.
\end{equation*}
 
 Noting the conditions in \eqref{eq:conjasymp}, the involutions
 $\mathfrak{r}_j$ for $j \in \{1, 2, 3, 4\}$ and $\mathfrak{d}_j$ for $j
 \in \{1, 2, 3\}$ in \eqref{eq:involutions} define symmetries on the
 pair of solutions $(C, L)$ as  follows:
  \begin{equation}\label{eq:symmholoext}
 \left\{
  \begin{array}{l}
   \mathcal R_j (C(z, \lambda), L(\bar z, \lambda)) = (C(z, \lambda), L(\bar z,
    \lambda)) \;\;\mbox{if $\mathfrak{r}_j (\check \eta) = \check \eta$ for} \;\; j \in \{1, 2, 3, 4\}, \\[0.1cm]
   \mathcal D_j (C(x, \lambda), L(y, \lambda)) = (C(x, \lambda), L(y,
    \lambda)) \;\;\mbox{if $\mathfrak{d}_j (\check \eta) = \check \eta$ for}\;\;  j \in \{1, 2, 3\},
  \end{array}
 \right.
 \end{equation}
 where $x = z = \bar z \in \mathbb R$ and $y = w = \bar w \in \mathbb
 R$.  Applying the generalized Iwasawa decomposition of Theorem
 \ref{doublesplitting} for $(C, L) \in \Lambda SL(2, \mathbb C)_{\sigma}
 \times \Lambda SL(2, \mathbb C)_{\sigma}$, we have
 \begin{equation}\label{eq:Iwasawadecom}
 (C, L) = (F, F) ({\rm id}, W) (V_{+}, V_-)\;,
 \end{equation}
 where $V_{\pm} \in \Lambda^{\pm} SL(2, \mathbb C)_{\sigma}$. If $(z, w)
 \in \mathfrak{D}^2$ is sufficiently close to $(z_*, w_*) \in \mathfrak
 D^2$, then the middle term $W$ of the generalized
 Iwasawa decomposition is identity.

 Since the left component $F$ of the generalized Iwasawa
 decomposition in \eqref{eq:Iwasawadecom} can be rephrased as $ F = C
 V_+^{-1} = L V_{-}^{-1}$, we have 
 \begin{equation*}
 C^{-1} L = V_{+}^{-1} V_{-}\;.
 \end{equation*}
 From the symmetries on $(C, L)$ in \eqref{eq:symmholoext}, $V_{-}$ and
 $V_{+}$ have the following relations:
 \begin{equation*}
 \left\{
\begin{array}{l}
 \mathcal
  C_j (V_{\pm}(z, \bar z, \lambda)) =  k^{(\mathfrak{c}, j)}(z,\bar
  z)^{-1}V_{\mp}(z, \bar z, \lambda)\;\;\mbox{if $\mathfrak{r}_j (\check \eta) = \check \eta$ for} \;\; j \in \{1, 2, 3, 4\}, \\[0.2cm]
 \mathcal S_j (V_{\pm}(x, y, \lambda) ) = k^{(\mathfrak{s}, j)}(x,y)^{-1}
 V_{\pm}(x, y, \lambda)  \;\;\mbox{if $\mathfrak{d}_j (\check \eta) = \check \eta$ for}\;\;  j \in \{1, 2, 3\},
\end{array}
 \right.
 \end{equation*}
 where $k^{(\mathfrak{c}, j)}(z,\bar z)$ and $k^{(\mathfrak{s}, j)}(x,
 y)$ are $\lambda$-independent diagonal matrices satisfying the
 symmetries $ \mathcal C_j
 (k^{(\mathfrak{c}, j)}(z, \bar z)) = k^{(\mathfrak{c}, j)}(z, \bar
 z)^{-1}$ and $ \mathcal S_j(k^{(\mathfrak{s}, j)}(x, y)) = k^{(\mathfrak{s}, j)}(x,
 y)^{-1}$ respectively. From the discussion above $F$ has the symmetry as follows:
 \begin{equation}\label{eq:invarianceofF}
 \left\{
  \begin{array}{l}
   \mathcal C_j (F(z, \bar z, \lambda)) = F(z, \bar z, \lambda)k^{(\mathfrak{c},j)} (z,
    \bar z) \;\;\mbox{if} \;\; \mathfrak{r}_j(\check \eta) = \check
    \eta\;\;\mbox{for}\;j \in \{1, 2, 3, 4\}, \\[0.1cm]
   \mathcal S_j (F(x, y, \lambda)) =
   F(x, y, \lambda)k^{(\mathfrak{s}, j)}(x, y)
   \;\;\mbox{if}  \;\;\mathfrak{d}_j(\check \eta) = \check \eta \;\;\mbox{for}\;j \in \{1, 2, 3\}.
  \end{array}
 \right.
 \end{equation}
  Let $F^{(\mathfrak{c}, j)}$
 (resp. $F^{(\mathfrak{s}, j)}$) denote the left components $F$ of the
 generalized Iwasawa decomposition in \eqref{eq:Iwasawadecom} which have
 the symmetries in \eqref{eq:invarianceofF} by $\mathcal C_j$ (resp. $\mathcal S_j$).  
 Let $\tilde k^{(\mathfrak{c}, j)} (z, \bar z)$ and $\tilde
 k^{(\mathfrak{s}, j)}(x, y)$ be the $\lambda$-independent diagonal
 matrices such that $\tilde k^{(\mathfrak{c}, j)} (z, \bar z)^2 =
 k^{(\mathfrak{c}, j)}(z, \bar z)$ and $\tilde k^{(\mathfrak{s}, j)} (x,
 y)^2 = k^{(\mathfrak{s}, j)}(x, y)$, respectively.  
 Setting $\tilde F^{(\mathfrak{c}, j)}(z, \bar z, \lambda) =
 F^{(\mathfrak{c}, j)}(z, \bar z, \lambda) \tilde k^{(\mathfrak{c}, j)}
 (z, \bar z)$ and $\tilde F^{(\mathfrak{s}, j)}(x, y, \lambda) =
 F^{(\mathfrak{s}, j)}(x, y, \lambda) \tilde k^{(\mathfrak{s}, j)}(x,
 y)$, we have 
 \begin{equation*}
\left\{
  \begin{array}{l}
  \mathcal C_j (\tilde F^{(\mathfrak{c}, j)}(z, \bar z, \lambda)) =
   \tilde F^{(\mathfrak{c}, j)}(z, \bar z, \lambda) \;\;\mbox{if } \;\;
   \mathfrak{r}_j   (\check \eta) =\check \eta \;\;\mbox{for}\;j \in \{1, 2, 3, 4\}, \\[0.1cm]
 \mathcal S_j(\tilde F^{(\mathfrak{s}, j)} (x, y, \lambda)) = \tilde
 F^{(\mathfrak{s}, j)} (x, y, \lambda) \;\;\mbox{if} \;\; \mathfrak{d}_j
 (\check \eta) = \check \eta \;\;\mbox{for}\;j \in \{1, 2, 3\}.
  \end{array}
 \right.
 \end{equation*}
 Moreover a straightforward calculation shows that $\alpha^{(\mathfrak{c}, j)} :=\tilde
 F^{(\mathfrak{c}, j)-1} d \tilde F^{(\mathfrak{c}, j)}$ and $\alpha^{(\mathfrak{s}, j)} :=\tilde
 F^{(\mathfrak{s}, j)-1} d \tilde F^{(\mathfrak{s}, j)}$ have the forms in
 \eqref{eq:alpha} with the properties in \eqref{eq:solforMaurer}, i.e., $\tilde
 F^{(\mathfrak{c}, j)} \in \Lambda SL(2, \mathbb
 C)_\sigma^{(\mathfrak{c}, j)}$ and $\tilde F^{(\mathfrak{s}, j)}  \in
 \Lambda SL(2, \mathbb C)_\sigma^{(\mathfrak{s},  j)}$ are the
 extended framings. 
 From the argument in Theorem \ref{thm:compactsplit}, the Sym formulas
 $\varPhi^{(\mathfrak{c}, j)}$ for $j \in \{1, 2,
 3\}$ in \eqref{eq:Sym-Bobenko-2} via $\tilde F^{(\mathfrak{c}, j)}$, 
 $\varPhi^{(\mathfrak{c}, 4)}$ in \eqref{eq:Sym-Bobenko-3} via
 $\tilde F^{(\mathfrak{c}, 4)}$ and $\varPhi^{(\mathfrak{s}, j)}$  for
 $j \in \{1, 2, 3\}$ in \eqref{eq:Sym-Bobenko-4} via $\tilde
 F^{(\mathfrak{s}, j)}$ define immersions
 which have the properties as desired. This completes the proof.
\end{proof}
\begin{Remark}
 From the forms of pairs of involutions $\mathfrak{r}_j$ for $j \in \{1, 2, 3,
 4\}$ defined in  \eqref{eq:involutions}, the pairs of holomorphic
 potentials $\check \eta$ for $(C, j)$ cases in Theorem
 \ref{thm:DPWforint} are generated by a single potential,
 i.e., $\check \eta = (\eta, \tau ) = (\eta, \mathfrak{c}_j ( \eta ))$,
 where $\mathfrak{c}_j$ for $j \in \{1, 2, 3, 4\}$ are involutions defined
 in \eqref{eq:inv-1-forms}.
\end{Remark}
\begin{Remark}
 In the proof of Theorem \ref{thm:DPWforint}, we assume our domain
 $\mathfrak D^2 \subset \mathbb C^2$ is sufficiently small around the
 initial point $(z_*, w_*) \in \mathfrak D^2$ so that the middle term
 $w_n$ of the generalized
 Iwasawa decomposition of Theorem \ref{doublesplitting} is in the
 identity component. In
 general, if we consider the larger domain $\widetilde{\mathfrak
 D}^2$ such that $\mathfrak D^2 \subset \widetilde{\mathfrak D}^2$, then
 the middle terms $w_n$ have many components.  Therefore the extended
 framing $F$ could have singularities on $\widetilde{\mathfrak D}^2$. 
\end{Remark}
\appendix
\section{Basic notation and results for affine Kac-Moody Lie algebras
 and Lie groups}\label{BasicResult}
\subsection{Kac-Moody Lie algebras}\label{subsc:Affine}
 We first recall the definition of the {\it generalized Cartan matrix},
 which is an $n \times n$ matrix $A= (a_{ij})$ satisfying the following
 conditions: 
 \begin{enumerate}
 \item For $i \in \left\{ 1, \dots, n\right\}$, $a_{ii} =2$.
 \item For $i \neq j$,  $a_{ij} \leq 0$.
 \item For $i \neq j$, if $a_{ij} = 0$, then $a_{ji} =0$.
 \end{enumerate}
 The generalized Cartan matrix $A$ is called {\it symmetrizable} if 
 $A$ can be decomposed as $A = D S$, where $D$ is a positive definite
 diagonal matrix and $S$ is a symmetric matrix. The generalized Cartan
 matrix $A =(a_{ij})$ is called {\it decomposable} if there exists
 $\sigma \in S_n$ such that $(a_{\sigma(i)\sigma(j)}) =
 \left(\begin{smallmatrix}B & 0 \\ 0 & C \end{smallmatrix}\right)$,
 where $B$ and $C$ are square matrices. If $A$ is not decomposable, then
 $A$ is called {\it indecomposable}. The symmetrizable and
 indecomposable generalized Cartan matrices are classified into 
 {\it finite}, {\it affine} and {\it indefinite} if $S (= D^{-1} A)$ is
 positive definite, positive semidefinite and indefinite respectively,
 see \cite[Chapter 4]{Kac:Kac-Moody}.
 
 Let $A= (a_{ij})$ be the generalized Cartan matrix of rank $r$, and let
 $\mathfrak h$ be a vector space over the complex field $\mathbb
 C$ such that ${\rm dim} \mathfrak h = n + {\rm corank} (A)$. Moreover, 
 let $\Pi = \left\{  \alpha_1, \dots, \alpha_n \right\}$ and $\Check\Pi =
 \left\{ \check\alpha_1, \dots, \check\alpha_n \right\}$ be linearly
 independent in $\mathfrak h^{*}$ and $\mathfrak h$, respectively, such
 that $\alpha_j (\Check \alpha_i) = a_{ij}$. It follows
 that if $A$ is nonsingular then $\Check \Pi$ (resp. $\Pi$) is a basis
 of $\mathfrak h$ (resp. $\mathfrak h^{*}$).  It is well known that the $\alpha_i$
 (resp. $\check\alpha_i$) for $i \in \{1, 2, \dots, n\}$ are known as
 the {\it roots} (resp. the {\it coroots}).

 We now define the complex Kac-Moody algebra  $\mathfrak g = \mathfrak
 g(A)$ associated to $A$: it is generated by $\left\{ \mathfrak h, e_i,
 f_i; i = 1,2, \dots, n\right\}$ with the following relations
\begin{equation}
\begin{array}{lll}
 [\mathfrak h, \mathfrak h] = 0, &  [e_i, f_j] =
 \delta_{ij}\Check{\alpha}_i & (i, j= 1, 2, \dots, n),\\

 [h, e_i] = \alpha_i ( h ) e_i, & [h, f_i] = -\alpha_i( h ) f_i & (h \in \mathfrak h),\\
({\rm ad} e_i)^{1-a_{ij}}(e_j) = 0, & ({\rm ad} f_i)^{1-a_{ij}}(f_j) = 0
 & (i \neq j).
\end{array}
\end{equation}
 The Kac-Moody algebra $\mathfrak g =\mathfrak g (A)$ is said to be of
 finite, affine or indefinite type if the corresponding generalized
 Cartan matrix $A$ is as well. We note that $\mathfrak{h}$ is known as the 
 Cartan subalgebra of the Kac-Moody algebra $\mathfrak g (A)$.
 
 For a finite dimensional Lie algebra and its Cartan
 matrix, the {\it extended Cartan matrix} can be defined by adding a
 zero'th row and column to the Cartan matrix, corresponding to adding a
 new simple root $\alpha_0 := -\theta$, where $ \theta$ is the maximal root for
 $\mathfrak{g}$ with respect to $\Pi = \left\{  \alpha_1, \dots,
 \alpha_n \right\}$.
 It is known that the extended Cartan matrix is an example of the
 generalized Cartan matrix of affine type \cite{Kac:Kac-Moody}.

\subsection{Loop algebras and loop groups}\label{loopgroups}
 In this subsection, we introduce a loop group and a loop algebra, and  we
 give a characterization of the loop algebras via affine Kac-Moody Lie
 algebras.
 Let $C_r :=\{\lambda \in \mathbb C \;|\;\; |\lambda |=r\}$ be the circle of 
 radius $r$ with $r \in (0, 1]$.
  Let $G$ be a Lie group and let $\mathfrak g$ be
 its Lie algebra.
 For any $r \in (0,1] \subset \mathbb R$, we consider the twisted loop 
 algebra and loop group:
 \begin{equation}\label{eq:loopalgebras}
  \Lambda_r \mathfrak g_\sigma = \left\{ \alpha :C_r \to
  \mathfrak g\; \left| \right. 
  \; \alpha \;\mbox{is continuous and}\; \alpha(-\lambda) = \sigma_3
  \alpha(\lambda)  \sigma_3 \;   \right\} \; ,
 \end{equation}
 \begin{equation}\label{eq:loopgroups}
   \Lambda_{r}G_\sigma = \left\{ g:C_r  \to G\; \left| \right. \; g \;\mbox{is continuous and}\; g(-\lambda) = 
  \sigma_3 g(\lambda) \sigma_3 \; \right \} \; ,
 \end{equation}
 where $\sigma_3$ is defined in \eqref{eq:ident2}.

 Let $\mathcal A$ be the ``Wiener algebra''
 \begin{equation}
 \label{eq:weiner}
 \mathcal A = \left\{ f(\lambda) = \sum_{n \in \mathbb Z} f_n \lambda^n\; 
 :\;  C_r \to \mathbb C \;\; ; \;\; \sum_{n \in \mathbb Z}|f_n| 
 < \infty \right\}\;.
 \end{equation}
 The Wiener algebra is a Banach algebra relative to the norm $\| f\| =
 \sum |f_n|$, and $\mathcal A$ consists of continuous functions.
 Thus the loop groups and loop algebras with coefficients in $\mathcal A$ are 
 Banach Lie groups and Banach Lie algebras. 
 In this paper, we consider only 
 the loop groups and algebras which can be extended
 continuously to $\mathbb C^{*}$.

 From \cite{Kac:Kac-Moody}, we quote the following realization of the
 affine Kac-Moody Lie algebras via loop algebras:
\begin{Theorem}[\cite{Kac:Kac-Moody}]\label{thm:Kac}
 Let $\mathfrak{g}$ be a complex finite dimensional simple Lie algebra,
 and let $A$ be its extended Cartan matrix, and let $\Lambda
 \mathfrak{g}$ be its loop algebra. Moreover, let
 $\tilde{\Lambda}\mathfrak{g} := \Lambda \mathfrak{g} \oplus \mathbb C k
 \oplus \mathbb C c$ be the twice central extension of the loop algebra
 $\Lambda \mathfrak{g}$. Then $\tilde{\Lambda} \mathfrak{g}$ is the 
 affine Kac-Moody Lie algebra associated to the generalized Cartan
 matrix A of affine type.
\end{Theorem}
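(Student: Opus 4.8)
The plan is to exhibit an explicit Lie algebra isomorphism between $\tilde\Lambda\mathfrak g$ and the Kac--Moody algebra $\mathfrak g(A)$ attached to the extended Cartan matrix $A$, by producing inside $\tilde\Lambda\mathfrak g$ a full set of Chevalley generators obeying exactly the relations encoded by $A$. First I would fix a Cartan subalgebra $\mathfrak h_0 \subset \mathfrak g$, simple roots $\alpha_1,\dots,\alpha_n$ with Chevalley generators $e_i,f_i$, the highest root $\theta$ with root vectors $e_\theta,f_\theta$, and the invariant form $(\cdot,\cdot)$ normalized so that $[e_\theta,f_\theta]=\check\theta$ and $(e_\theta,f_\theta)=1$. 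The bracket on $\tilde\Lambda\mathfrak g=\Lambda\mathfrak g\oplus\mathbb C k\oplus\mathbb C c$ is the standard one: $c$ is central, $k$ acts as the degree derivation $\lambda\partial_\lambda$, and the central term is governed by the cocycle $\psi(x\otimes\lambda^m,y\otimes\lambda^n)=m\,\delta_{m+n,0}\,(x,y)$. I would then declare the affine Cartan subalgebra to be $\hat{\mathfrak h}=\mathfrak h_0\oplus\mathbb C c\oplus\mathbb C k$.

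Next I would write down the candidate affine Chevalley data. For the finite nodes $i=1,\dots,n$ set $E_i=e_i\otimes 1$, $F_i=f_i\otimes 1$, with finite coroots $\check\alpha_i$ inside $\mathfrak h_0$; for the affine node $i=0$ set $E_0=f_\theta\otimes\lambda$, $F_0=e_\theta\otimes\lambda^{-1}$, and $\check\alpha_0=c-\check\theta$. Roots are extended to $\hat{\mathfrak h}^*$ by $\alpha_i(c)=\alpha_i(k)=0$, and the affine simple root is $\alpha_0=\delta-\theta$, where the basic imaginary root $\delta$ is determined by $\delta|_{\mathfrak h_0}=0$, $\delta(c)=0$, $\delta(k)=1$. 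A short computation gives $[E_0,F_0]=[f_\theta,e_\theta]\otimes 1+(f_\theta,e_\theta)\,c=c-\check\theta=\check\alpha_0$, while $[E_i,F_0]=[F_i,E_0]=0$ since $\theta\pm\alpha_i$ lies outside the root system for the relevant signs. Evaluating roots on coroots yields $\alpha_0(\check\alpha_0)=2$, $\alpha_0(\check\alpha_j)=-\theta(\check\alpha_j)$, and $\alpha_j(\check\alpha_0)=-\alpha_j(\check\theta)$, which are precisely the entries $a_{00}$, $a_{0j}$, $a_{j0}$ of the extended Cartan matrix.

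The remaining verification is that these generators satisfy the Serre relations $(\operatorname{ad}E_i)^{1-a_{ij}}E_j=0$ and $(\operatorname{ad}F_i)^{1-a_{ij}}F_j=0$. For $i,j\ge 1$ these already hold in $\mathfrak g$; the genuinely new relations involve the index $0$, and here I would invoke the standard structural facts about the highest root---that $\theta$ is dominant, that $\theta+\alpha_i$ is never a root, and that each $\alpha_i$-string through $\theta$ has the predicted length---to control the iterated adjoint action and match $1-a_{0j}$. Then I would upgrade the resulting surjection from $\mathfrak g(A)$ onto the subalgebra generated by $\{E_i,F_i,\hat{\mathfrak h}\}$ to an isomorphism onto all of $\tilde\Lambda\mathfrak g$: surjectivity follows because repeated brackets of $E_0,F_0$ against $\mathfrak g\otimes 1$ reach every power $\mathfrak g\otimes\lambda^m$, using simplicity of $\mathfrak g$, and injectivity follows from the Gabber--Kac theorem, since the $\mathbb Z$-grading of $\tilde\Lambda\mathfrak g$ by powers of $\lambda$ meets $\hat{\mathfrak h}$ only in degree $0$, so the kernel would be an ideal intersecting $\hat{\mathfrak h}$ trivially and must vanish.

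I expect the main obstacle to be the injectivity step rather than the relation-checking: writing down generators obeying the relations only produces a homomorphism out of $\mathfrak g(A)$, and one must separately rule out that the loop realization is a proper quotient. This is exactly where the affineness of $A$, equivalently the positive semidefiniteness of its symmetrization, and the triviality of the graded kernel enter, and it is the conceptual heart of the realization theorem.
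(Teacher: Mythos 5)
The paper itself offers no proof of this statement: Theorem \ref{thm:Kac} is quoted directly from \cite{Kac:Kac-Moody}, so your argument can only be compared with the standard proof in that source. On its own merits your outline is correct, and it is essentially the classical realization argument: the affine Chevalley data $E_0=f_\theta\otimes\lambda$, $F_0=e_\theta\otimes\lambda^{-1}$, $\check\alpha_0=c-\check\theta$ are the right ones, the computation $[E_0,F_0]=\check\alpha_0$ and the matching of $\alpha_i(\check\alpha_j)$ with the entries of the extended Cartan matrix are standard, surjectivity via simplicity of $\mathfrak{g}$ is correct, and you rightly single out injectivity as the conceptual crux. Two remarks on the comparison. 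First, you silently (and necessarily) corrected the paper's terminology: in $\tilde{\Lambda}\mathfrak{g}=\Lambda\mathfrak{g}\oplus\mathbb{C}k\oplus\mathbb{C}c$ the element $k$ must act as the degree derivation $\lambda\partial_\lambda$ and is \emph{not} central; if both $k$ and $c$ were central, then $\delta$ would vanish on the Cartan subalgebra, the subspaces $\mathfrak{g}_\alpha\otimes\lambda^n$ for varying $n$ could not be separated as root spaces, and the statement would be false. Second, your route differs from Kac's own: in \cite{Kac:Kac-Moody} the algebra $\mathfrak{g}(A)$ is \emph{defined} as the quotient of the algebra on the basic relations by the maximal ideal meeting the Cartan subalgebra trivially, so the realization theorem is proved there by checking generation, the basic relations, and that $\tilde{\Lambda}\mathfrak{g}$ admits no nonzero ideal meeting $\hat{\mathfrak{h}}$ trivially (a direct check using the $\lambda$-grading and simplicity of $\mathfrak{g}$) --- no Serre relations and no Gabber--Kac theorem are needed. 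Since the present paper defines Kac--Moody algebras by the Serre-type presentation of Appendix \ref{subsc:Affine}, your version is the one adapted to the paper's definitions, at the price of invoking the (deep) Gabber--Kac theorem, which is legitimate here because affine generalized Cartan matrices are symmetrizable. One small cleanup: the kernel of your surjection meets $\hat{\mathfrak{h}}$ trivially simply because the map restricts to an isomorphism of Cartan subalgebras; the remark about the $\mathbb{Z}$-grading is not what does the work at that point.
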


\subsection{Double loop groups and the generalized Iwasawa
  decompositions}\label{nsc:doubleIwasawa}
 In this subsection, we give the basic notation and results for double
 loop groups, see \cite{DW:CMC-loop} for more details. Let $D_r :=
 \{\lambda \in \mathbb C \;|\;\;  |\lambda| < r\}$ be an open disk.
 Also, let $A_r =\{\lambda \in \mathbb C
 \;|\;\; r < |\lambda| < 1/r\}$ be an open annulus containing $S^1$. 
 Furthermore, let $E_r =\{\lambda \in \mathbb C  \;|\;\; r <
 |\lambda| \} \cup \{\infty\}$  be an exterior of the circle $C_r$.

 We recall the definitions of the {\it twisted plus $r$-loop group}  and
 the {\it minus $r$-loop group} of $\Lambda SL(2, \mathbb C)_\sigma$ as
 follows:
 \begin{equation*}
  \Lambda_{r,  B}^+ SL(2,\mathbb C)_\sigma := \left\{ W_+ \in \Lambda_r
  SL(2,\mathbb C)_\sigma 
  \; \left|  \; 
 \begin{array}{r} 
  W_+(\lambda)  \text{ extends holomorphically}\; \\ 
  \text{to} \;\;D_r  \;\mbox{and}\;W_+(0) \in {\boldsymbol B}. 
 \end{array}
 \right.\right\} \; ,
 \end{equation*}
 \begin{equation*}
  \Lambda_{r, B}^- SL(2,\mathbb C)_\sigma := \left\{ W_- \in \Lambda_r
  SL(2,\mathbb C)_\sigma 
  \; \left| 
 \begin{array}{r}
  W_-(\lambda)  \text{ extends holomorphically}\\
  \text{to}\;\;E_r\; \;\mbox{and}\;W_-(\infty) \in \boldsymbol{B}.
 \end{array}
 \right. \right\} \; , 
 \end{equation*}
 where $\boldsymbol B$ is a subgroup of $SL(2, \mathbb C)$. If
 $\boldsymbol B = \{\rm id\}$ 
 we write the subscript $*$  instead of $\boldsymbol B$, if $\boldsymbol B = 
 SL(2, \mathbb C)$ we  abbreviate  $\Lambda_{r, B}^+ SL(2,\mathbb C)_\sigma$ and  
 $\Lambda_{r, B}^- SL(2,\mathbb C)_\sigma$ by  $\Lambda_r^+ SL(2,\mathbb
 C)_\sigma $ and 
 $\Lambda_r^- SL(2,\mathbb C)_\sigma$, respectively. From now on we will
 use the subscript  $\boldsymbol B$ as above only if $\boldsymbol B \cap
 SU(2) = \{\rm id \}$ holds.  When $r=1$, we always omit the $1$.

 We set the product of two loop groups:
 \begin{equation*}
 \mathcal H = \Lambda_r SL(2, \mathbb C)_\sigma \times \Lambda_R SL(2,
 \mathbb C)_\sigma\;\;,
\end{equation*}
 where $0 < r < R$.  Moreover we set the subgroups of $\mathcal H$ as follows:
\begin{equation*}
\begin{array}{l}
 \mathcal H_+ = \Lambda_r^+ SL(2, \mathbb C)_\sigma \times \Lambda_R^-
 SL(2, \mathbb C)_\sigma, \\[0.2cm]

 \mathcal H_-= \left\{ (g_1,\;\;g_2) \in \mathcal H \left|
 \;\begin{array}{r}
 \mbox{ $g_1$ and $g_2$ extend holomorphically}
 \\\mbox{to $A_r$ and  $g_1 |_{A_r} = g_2 |_{A_r}$
   }\end{array}\right. \right\}\;,
 \end{array}
 \end{equation*}
 We then quote Theorem 2.6 in \cite{DW:CMC-loop}. 
\begin{Theorem}\label{doublesplitting}
 $\mathcal H_- \times \mathcal H_+ \rightarrow \mathcal H_- \mathcal
 H_+$ is an analytic diffeomorphism. The image is open and dense in
 $\mathcal H$. More precisely
 $$
 \mathcal H = \bigcup_{n=0}^{\infty} \mathcal H_- w_n \mathcal H_+\;\;,
 $$
 where $ w_n =   \left({\rm id},\;\;\left(\begin{smallmatrix}\lambda^{n} & 
  0\\ 0 & \lambda^{-n}\end{smallmatrix}\right)\right) $ if $n=2k$ and 
  $  \left( {\rm id},\;\; \left( \begin{smallmatrix} 0 & 
  \lambda^{n} \\ -\lambda^{-n} & 0 \end{smallmatrix}\right)\right)$ if $n=2k+1$.
\end{Theorem}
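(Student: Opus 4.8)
The plan is to reduce the double decomposition to the classical Birkhoff factorization of the single twisted loop group $\Lambda SL(2,\mathbb C)_\sigma$ and then to upgrade the resulting bijection to an analytic diffeomorphism on the top stratum. First I would isolate the algebraic content: a pair $(C,L)\in\mathcal H$ factors as $(C,L)=(F,F)(V_+,V_-)$ with $(F,F)\in\mathcal H_-$ and $(V_+,V_-)\in\mathcal H_+$ exactly when there is an $F$ holomorphic on $A_r$ with $C=FV_+$ on $C_r$ and $L=FV_-$ on $C_R$. Eliminating $F=CV_+^{-1}=LV_-^{-1}$ turns this into the single equation $C^{-1}L=V_+^{-1}V_-$, which is precisely a Birkhoff factorization of the loop $g:=C^{-1}L\in\Lambda SL(2,\mathbb C)_\sigma$ into a factor $V_+^{-1}$ holomorphic on $D_r$ and a factor $V_-$ holomorphic on $E_R$. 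Hence the multiplication map $\mathcal H_-\times\mathcal H_+\to\mathcal H$ is intertwined with ordinary Birkhoff multiplication through $(C,L)\mapsto C^{-1}L$, and the double cosets $\mathcal H_-\backslash\mathcal H/\mathcal H_+$ are identified with the Birkhoff double cosets of $\Lambda SL(2,\mathbb C)_\sigma$.

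Next I would prove that the map is an analytic diffeomorphism onto the open set $\mathcal H_-\mathcal H_+$. The infinitesimal input is the direct-sum splitting $\operatorname{Lie}\mathcal H=\operatorname{Lie}\mathcal H_-\oplus\operatorname{Lie}\mathcal H_+$, which under the reduction above is the infinitesimal Birkhoff decomposition of $\Lambda\mathfrak{sl}(2,\mathbb C)_\sigma$ into its parts holomorphic inside $D_r$ and outside $E_R$, a Laurent-series splitting that is manifestly a topological direct sum. Because the Wiener algebra $\mathcal A$ of \eqref{eq:weiner} is a Banach algebra, the group of invertibles is open, and the inverse function theorem in this Banach category promotes the Lie-algebra splitting to an analytic local inverse of the multiplication map; this gives openness of $\mathcal H_-\mathcal H_+$ and the diffeomorphism property. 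Injectivity reduces to computing $\mathcal H_-\cap\mathcal H_+$: an element there is a pair $(G,G)$ with $G$ holomorphic simultaneously on $D_r$, on $A_r$ and on $E_R$, hence holomorphic on all of $\mathbb P^1$ with values in $SL(2,\mathbb C)$ and therefore constant; the residual constant diagonal torus is exactly the $\lambda$-independent diagonal freedom recorded in Theorem \ref{thm:DKP-Extendedframings}, so with that normalization fixed the map is bijective.

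Finally I would pass to the global stratification and identify the $w_n$. Invoking the classical Birkhoff theorem for $\Lambda SL(2,\mathbb C)_\sigma$ (as in \cite{DPW}), the single loop group is the disjoint union of the cells $\Lambda^+ SL(2,\mathbb C)_\sigma\, w\, \Lambda^- SL(2,\mathbb C)_\sigma$ as $w$ runs over the torus loops compatible with the $\sigma$-twisting, with the big cell open and dense; pulling this back through $(C,L)\mapsto C^{-1}L$ yields $\mathcal H=\bigcup_{n=0}^\infty\mathcal H_-w_n\mathcal H_+$ together with density of the top stratum. To pin down the admissible $w_n$ I would impose $g(-\lambda)=\sigma_3 g(\lambda)\sigma_3$ on a torus loop: the diagonal loop $\operatorname{diag}(\lambda^n,\lambda^{-n})$ commutes with $\sigma_3$ but picks up $(-1)^n$ under $\lambda\mapsto-\lambda$, so it is admissible iff $n$ is even, while the anti-diagonal loop $\left(\begin{smallmatrix}0&\lambda^n\\-\lambda^{-n}&0\end{smallmatrix}\right)$ reverses sign under conjugation by $\sigma_3$ and so is admissible iff $n$ is odd; both lie in $SL(2,\mathbb C)_\sigma$, matching the representatives in the statement, with the winding carried in the second slot $(\id,\cdot)$ since the reduction concentrated it in $L$.

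The main obstacle is not the algebra but the analytic claims of openness, density and local finiteness of the stratification in the genuinely double setting with two distinct radii $0<r<R$: controlling the middle terms $w_n$ uniformly as one leaves the big cell, and verifying that the factors depend analytically on $(C,L)$ in the Banach topology, require the full loop-group machinery. For these estimates I would rely on Theorem 2.6 of \cite{DW:CMC-loop}.
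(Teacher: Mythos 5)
You should first be aware that the paper does not actually prove this theorem: it quotes it as Theorem 2.6 of \cite{DW:CMC-loop} and remarks only that ``the proof \dots is almost verbatim the proof given in \cite{BerG:loop}.'' So there is no detailed argument in the paper to match yours against; the honest comparison is between your sketch and the Bergvelt--Guest/Dorfmeister--Wu argument it points to. Your outline does land in the right circle of ideas (Birkhoff-type factorization, Banach inverse function theorem on the big cell, the twisting condition $g(-\lambda)=\sigma_3 g(\lambda)\sigma_3$ forcing the parity of the representatives $w_n$, and the correct observation that $\mathcal H_-\cap\mathcal H_+$ consists of constant diagonal pairs, so the ``diffeomorphism'' must be read modulo that $\mathbb C^*$ or after a normalization).

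There are, however, two genuine gaps. First, the reduction to a single Birkhoff factorization via $(C,L)\mapsto C^{-1}L$ does not typecheck for a general element of $\mathcal H=\Lambda_r SL(2,\mathbb C)_\sigma\times\Lambda_R SL(2,\mathbb C)_\sigma$: here $C$ is a Wiener-class loop on $C_r$ and $L$ a loop on $C_R$ with $r<R$, and neither need extend holomorphically to a common domain, so the product $C^{-1}L$ is simply undefined. Your elimination of $F$ is valid only \emph{after} one already knows $(C,L)\in\mathcal H_-w_n\mathcal H_+$ (as in Step 3 of the Weierstra{\ss} representation, where $C$ and $L$ are holomorphic in $\lambda$ on suitable annuli); it cannot be used to \emph{establish} the stratification. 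The correct framework, and the actual content of \cite{BerG:loop}, is a Riemann--Hilbert factorization across the two boundary circles of the annulus $\{r<|\lambda|<R\}$ --- equivalently, gluing trivial holomorphic $SL(2,\mathbb C)$-bundles over the annulus and over $D_r\cup E_R$ along $C_r\sqcup C_R$ and invoking Grothendieck's classification of bundles on $\mathbb P^1$ to produce the integer $n$ and the representatives $w_n$. Second, your closing appeal to ``Theorem 2.6 of \cite{DW:CMC-loop}'' for the openness, density and analyticity estimates is circular: that theorem \emph{is} the statement being proven. To make the argument non-circular you must either carry out the Fredholm/Toeplitz-operator analysis for the two-circle problem yourself or cite the proof in \cite{BerG:loop}, which is what the paper in fact does.
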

 The proof of the theorem above is almost verbatim the proof given in
 the  basic decomposition paper \cite{BerG:loop}, see also \cite{DK:cyl}.

\bibliographystyle{plain}
\def\cprime{$'$}

\end{document}